\newtheorem{theorem}{Theorem}[section]
\newtheorem{definition}[theorem]{Definition}
\newtheorem{remark}[theorem]{Remark}
\newtheorem{lemma}[theorem]{Lemma}
\newtheorem{assumption}{Assumption}
\newtheorem{corollary}[theorem]{Corollary}
\title[Nodal profile control for networks of GEB]{Nodal profile control for networks of geometrically exact beams}
\author{Günter Leugering}
\author{Charlotte Rodriguez}
\author{Yue Wang}
\thanks{Accepted in \textit{Journal de Mathématiques Pures et Appliquées},  \texttt{DOI}: \href{https://doi.org/10.1016/j.matpur.2021.07.007}{\texttt{10.1016/j.matpur.2021.07.007}}.\\
AMS subject classification. 35L50, 35R02, 93B05, 93C20, 35Q74, 74K10.\\
\noindent \textit{Keywords. Geometrically exact beam, networks, nodal profile controllability, well-posedness.}\\
\noindent \textbf{Funding:} This project is supported by the Deutsche Forschungsgemeinschaft DFG L595/31-1 and the European Union’s Horizon 2020 research and innovation programme under the Marie Sklodowska-Curie grant agreement No.765579-ConFlex.\\
\textit{Email adresses}: \texttt{guenter.leugering@fau.de, charlotte.rodriguez@fau.de, yue.wang@fau.de}}
\date{\today}
\begin{document}

\maketitle

\begin{center}\small 
\textit{Chair of Applied Mathematics 2, Department of Mathematics, \\
Friedrich-Alexander-Universit\"at Erlangen-N\"urnberg,\\
Cauerstr. 11, 91058 Erlangen, Germany}
\end{center}

\begin{abstract}
In this work, we consider networks of so-called geometrically exact beams, namely, shearable beams that may undergo large motions. The corresponding mathematical model, commonly written in terms of displacements and rotations expressed in a fixed basis (Geometrically Exact Beam model, or GEB), has a quasilinear governing system. However, the model may also be written in terms of intrinsic variables expressed in a moving basis attached to the beam (Intrinsic GEB model, or IGEB) and while the number of equations is then doubled, the latter model has the advantage of being of first-order, hyperbolic and only semilinear. First, for any network, we show the existence and uniqueness of semi-global in time classical solutions to the IGEB model (i.e., for arbitrarily large time intervals, provided that the data are small enough). Then, for a specific network containing a cycle, we address the problem of local exact controllability of nodal profiles for the IGEB model -- we steer the solution to satisfy given profiles at one of the multiple nodes by means of controls applied at the simple nodes -- by using the constructive method of Zhuang, Leugering and Li [\emph{Exact boundary controllability of nodal profile for Saint-Venant system on a network with loops}, in J. Math. Pures Appl., 2018]. Afterwards, for any network, we show that the existence of a unique classical solution to the IGEB network implies the same for the corresponding GEB network, by using that these two models are related by a nonlinear transformation. In particular, this allows us to give corresponding existence, uniqueness and controllability results for the GEB network.
\end{abstract}

\tableofcontents

\addtocontents{toc}{\protect\setcounter{tocdepth}{1}}



\section{Introduction}
\label{sec:intro}

\noindent \textbf{Nodal profile controllability.}
The problem of \emph{nodal profile controllability} of partial differential equations on networks refers to the task of steering the solution thereof to prescribed profiles on specific nodes. Formally speaking, this amounts to saying that said solution should be controlled to given time-dependent functions (called \emph{nodal profiles}) over certain time intervals by means of controls actuating at one or several other nodes.
This is in contrast to the classical question of exact controllability, wherein one seeks to steer the state, at a certain time, to a given final state on the entire network.
The nodes with prescribed profiles are then called \emph{charged nodes} \cite{YWang2019partialNP} (or \emph{object-nodes} \cite{Zhuang2021}), while the nodes at which the controls are applied are the \emph{controlled nodes} \cite{YWang2019partialNP} (or \emph{control nodes} \cite{Zhuang2021}).

The notion of exact boundary controllability of nodal profiles was, to our knowledge, first introduced by Gugat, Herty and Schleper in \cite{gugat10}, motivated by applications in the context of gas transport through pipelines networks. 
Therein, consumers are located at the endpoints of the network and the nodal profiles represent the consumer satisfaction, and the former are sought to be attained by the flow which is controlled by means of a number of compressors actuating at several nodes.

\medskip

\noindent Motivated by the abundant practical relevance of such control problems, Tatsien Li and coauthors generalized the aforementioned results to one-dimensional first-order quasilinear hyperbolic systems with nonlinear boundary conditions \cite{gu2011, li2010nodal, li2016book}. 
For results on the wave equation on a tree-shaped network with a general topology or the unsteady flow in open canals, we refer the reader to \cite{kw2011, kw2014, YWang2019partialNP} and \cite{gu2013}, respectively. 

Whilst the exact-controllability of the Saint-Venant equations on networks with cycles is not true in general \cite{LLS, li2010no}, for certain networks with cycles, the exact nodal profile controllability can be shown by means of a so-called \emph{cut-off} method \cite{Zhuang2021, Zhuang2018}.
In this regard, in line with intuition, the concept of nodal profile controllability is weaker than that of exact controllability.
Hence, when considering a system defined on a network with cycles, a situation which is encountered in many practical applications, the nodal profile control problem is a rather meaningful and feasible goal to attain.

\medskip

\noindent
The method used by Li et al. to prove nodal profile controllability is \emph{constructive} in nature, in the sense that it relies on solving the equation forward in time and sidewise, to build a specific solution which achieves the desired goal, before evaluating the trace of this solution to obtain the desired controls. 
All this is done in the context of regular $C^1_{x,t}$ solutions for first-order systems, which are \emph{semi-global} in time -- this means that for any time $T>0$, and for small enough initial and boundary data, a unique solution exists at least until time $T>0$ --, a solution concept originating from \cite{LiJin2001_semiglob}.
In \cite{LiRao2002_cam, LiRao2003_sicon}, this notion of solution is used for proving local exact boundary controllability of one-dimensional quasilinear hyperbolic systems. In these works, a general framework for a constructive method is proposed, from which all subsequent constructive methods derive.
The cornerstone of Li's method is thus the proof of semi-global existence and uniqueness, and, in the case of networks, a thorough study of the transmission conditions at multiple nodes. As solving a sidewise problem entails exchanging the role of the spatial and time variables, 
this method fundamentally exploits the one-dimensional nature of the system (see also Remark \ref{rem:controllability_thm} \ref{subrem:sidewise}).

Very recently, in the context of the one-dimensional linear wave equation, the controllability of nodal profiles has also been studied in the context of less regular states and controls spaces, by using the duality between controllability and observability and showing an observability inequality. For star-shaped networks, one may see \cite{YWang2021_NP_HUM} where the sidewise and D’Alembert Formula is used, and for a single string one may see \cite{Sarac2021} which relies on sidewise energy estimates.

\medskip

\noindent \textbf{Geometrically exact beams.} 
Multi-link flexible structures such as large spacecraft structures, trusses, robot arms, solar panels, antennae \cite{chen_serial_EBbeams,  flotow_spacecraft, LLS} have found many applications in civil, mechanical and aerospace engineering.
The behavior of such structures is generally modeled by networks of interconnected beams.

In this article, we will address the problem of nodal profile controllability for \emph{networks of beams}, possibly with cycles, a problem which has not yet been considered in the literature. 
The network in question consists of $N$ beams, indexed by $i \in \{1, \ldots, N\}$, evolving in $\mathbb{R}^3$, which are mutually linked via rigid joints. 
The beams are assumed to be freely vibrating, meaning that external forces and moments, such as gravity or aerodynamic forces, have been set to zero.

\medskip

\noindent 
Nowadays, there is a growing interest in modern highly flexible light-weight structures -- for instance robotic arms \cite{grazioso2018robot}, flexible aircraft wings \cite{Palacios2010aero} or wind turbine blades \cite{Munoz2020, wang2014windturbine} -- which exhibit motions of large magnitude, not negligible in comparison to the overall dimensions of the object.
To capture such a behavior, one has to consider a beam model which is \emph{geometrically exact}, in the sense that the governing system presents nonlinearities in order to also represent large motions -- i.e., large displacements of the centerline and large rotations of the cross sections. 

This beam model, similarly to the more well-known Euler-Bernoulli and Timoshenko systems, is one dimensional with respect to the spatial variable $x$ and accounts for linear elastic material laws, meaning that the strains (which are the local changes in the shape of the material) are assumed to be small.
Models for geometrically exact beams account for shear deformation, similarly to the Timoshenko system. 
Moreover, the geometrical and material properties of the beam may vary along the beam (indeed, we will see that the coefficients of the system depend on $x$), and the material may be anisotropic.
As a matter of fact, the Euler-Bernoulli and Timoshenko systems can be derived from geometrically exact beam models under appropriate simplifying assumptions \cite[Section IV]{Artola2021damping}.

\medskip

\noindent We will see, in Subsection \ref{subsec:GEBmodels}, that the mathematical model for geometrically exact beams may be written in terms of the position of the centerline of the beam and the orientation of its cross sections, with respect to a fixed coordinate system. This is the commonly known \emph{Geometrically Exact Beam model}, or GEB, which originates from the work of Reissner \cite{reissner1981finite} and Simo \cite{simo1985finite}. The governing system is quasilinear, consisting of six equations. One may draw a parallel with the wave equation as the GEB model is of second order both in space and time.

On the  other hand, the mathematical model can also be written in terms of so-called \emph{intrinsic} variables -- namely, velocities and internal forces/moments, or equivalently velocities and strains -- expressed in a moving coordinate system attached to the beam.
This yields the \emph{Intrinsic Geometrically Exact Beam model}, or IGEB, which is due to Hodges \cite{hodges1990, hodges2003geometrically}. The governing system then counts twelve equations.
An interesting feature of the IGEB model is that it falls into the class of one-dimensional first-order hyperbolic systems and is moreover only semilinear. Therefore, from a mathematical perspective, one gains access to the broad literature which has been developed on such system -- see notably by Li and Yu \cite{Li_Duke85}, Bastin and Coron \cite{BC2016} -- beyond the context of beam models.

Due to its less compound nature, the IGEB formulation is used in aeroelastic modelling and engineering, notably in the context of very light-weight and slender aircraft aiming to remain airborne almost perpetually, and that consequently exhibit great flexibility \cite{Palacios2017modes, Palacios2011intrinsic, Palacios2010aero}; see also \cite{Artola2020aero, Artola2019mpc, Artola2021damping} where the authors additionally take into account structural damping.

\medskip

\noindent On another hand, as pointed out in \cite[Sec. 2.3.2]{weiss99}, one may see the GEB model and IGEB model as being related by a \emph{nonlinear transformation} (which we define in \eqref{eq:transfo}). In this work, we will keep track of this link between both models, studying mathematically the latter, and then deducing corresponding results for the GEB model.

As commonly done in solid mechanics, both the GEB and IGEB models are \emph{Lagrangian descriptions} of the beam (as opposed to the \emph{Eulerian description}), in the sense that the independent variable $x$ is attached to matter ($x$ is a label sticking to the particles of the beam's centerline throughout the deformation history) rather than being attached to an inertial frame of reference.

The IGEB model can also be seen as the beam dynamics being formulated in the \emph{Hamiltonian} framework in continuum mechanics (see notably \cite[Sections 5, 6]{Simo1988}), while the GEB model corresponds to the \emph{Lagrangian} framework.
Then, taking into account the interactions of the beam with its environment, one may study the IGEB model from the perspective of \emph{Port-Hamiltonian Systems} (see \cite{Maschke1992} for the finite dimension setting and \cite{Schaft2002} and \cite[Chapter 7]{Zwart2012bluebook} for the infinite dimensions setting), as in  \cite{Macchelli2007, Macchelli2009} and \cite[Section 4.3.2]{Macchelli2009book}. See also the case of the Timoshenko model in \cite{Macchelli2004Timo}.

\subsection{Our contributions}
In this article we consider the problem of nodal profile controllability in the context of a specific network of geometrically exact beams containing one cycle.
\textcolor{black}{Afterwards, the case of other networks, possibly containing several cycles, is discussed in Section \ref{sec:conclusion}: we give a few typical examples, together with a brief  algorithm (Algorithm \ref{algo:control}) to realize nodal profile controllability under some requirements.}

\textcolor{black}{Our main results will be given on IGEB networks (Theorem \ref{th:controllability}) and GEB networks (Corollary \ref{coro:controlGEB}) as follows.}
\begin{enumerate}
\item We first consider a general network of beams whose dynamics are given by the IGEB model (System \eqref{eq:syst_physical} below). We show, in Theorem \ref{th:existence}, that there exists a unique semi-global in time $C_{x,t}^1$ solution to \eqref{eq:syst_physical}.

This theorem is also a necessary step to show Theorem \ref{th:controllability}, namely, the local exact controllability of nodal profiles for System \eqref{eq:syst_physical}, in the special case of an A-shaped network (see Fig. \ref{subfig:AshapedNetwork}).
More precisely, we drive the solution to satisfy given profiles at one of the multiple nodes by controlling the internal forces and moments at the two simple nodes.

\item For a general network, via Theorem \ref{thm:solGEB}, we make the link between the IGEB network (System \eqref{eq:syst_physical}) and the corresponding system \eqref{eq:GEB_netw} in which the beams dynamics are given by the GEB model. 
More precisely, we show that the existence of a unique $C^1_{x,t}$ solution to \eqref{eq:syst_physical} implies that of a unique $C^2_{x,t}$ solution to \eqref{eq:GEB_netw}, provided that the data of both systems fulfill some compatibility conditions.

In particular, Theorem \ref{thm:solGEB}, permits to translate Theorems \ref{th:existence} and \ref{th:controllability} to corresponding results in terms of the GEB model \eqref{eq:GEB_netw}, which are Corollaries \ref{coro:wellposedGEB} and \ref{coro:controlGEB}, respectively.
\end{enumerate}

\subsection{Notation}
\label{subsec:notation}

Let $m, n\in \mathbb{N}$. Here, the identity and null matrices are denoted by $\mathbf{I}_n \in \mathbb{R}^{n \times n}$ and $\mathbf{0}_{n, m} \in \mathbb{R}^{n \times m}$, and we use the abbreviation $\mathbf{0}_{n} = \mathbf{0}_{n, n}$. The transpose of $M\in \mathbb{R}^{m\times n}$ is denoted by $M^\intercal$.
The symbol $\mathrm{diag}(\, \cdot \, , \ldots, \, \cdot \, )$ denotes a (block-)diagonal matrix composed of the arguments.
We denote by $\mathcal{S}_{++}^n$ the set of positive definite symmetric matrices in $\mathbb{R}^{n \times n}$.
The cross product between any $u, \zeta \in \mathbb{R}^3$ is denoted $u \times \zeta$, and we shall also write $\widehat{u} \,\zeta = u \times \zeta$, meaning that $\widehat{u}$ is the skew-symmetric matrix 
\begingroup 
\setlength\arraycolsep{3pt}
\renewcommand*{\arraystretch}{0.9}
\begin{linenomath}
\begin{equation*}
\widehat{u} = \begin{bmatrix}
0 & -u_3 & u_2 \\
u_3 & 0 & -u_1 \\
-u_2 & u_1 & 0
\end{bmatrix}, 
\end{equation*}
\end{linenomath}
\endgroup
and for any skew-symmetric $\mathbf{u} \in \mathbb{R}^{3 \times 3}$, the vector $\mathrm{vec}(\mathbf{u}) \in \mathbb{R}^3$ is such that $\mathbf{u} = \widehat{\mathrm{vec}(\mathbf{u})}$. Finally, $\{e_\alpha\}_{\alpha=1}^3 = \{(1, 0, 0)^\intercal, (0, 1, 0)^\intercal, (0, 0, 1)^\intercal\}$ denotes the standard basis of $\mathbb{R}^3$.

\subsection{Outline}

In Section \ref{sec:model_results}, we present in more detail the GEB and IGEB models (Subsection \ref{subsec:GEBmodels}) before introducing the corresponding systems which give the dynamics of the beam network (Subsection \ref{subsec:network_systems}). Then, in Subsection \ref{subsec:main_results} we presents the main results of this article.

Section \ref{sec:exist} is concerned with the well-posedness of the network system \eqref{eq:syst_physical}: 
in Subsections \ref{subsec:hyperbolic} and \ref{subsec:change_var} we show that the system \eqref{eq:syst_physical} is hyperbolic and write it in Riemann invariants, we then study the transmission conditions for the diagonalized system in Subsection \ref{subsec:out_in_info}, and finally, we prove Theorem \ref{th:existence} in Subsection \ref{subsec:proof_exist}.

In Sections \ref{sec:controllability} and \ref{sec:invert_transfo}, we give the proofs of Theorems \ref{th:controllability} and \ref{thm:solGEB}, respectively.

\textcolor{black}{Then, in Section 6, we give generalized considerations on more involved networks, namely, with more than one cycles, or with prescribed profiles on several nodes.}

\section{The model and main results}
\label{sec:model_results}

As mentioned in the introduction, the beams' dynamics may be given from different points of view, that we specify in the following subsection.

\subsection{Dynamics of a geometrically exact beam}
\label{subsec:GEBmodels}

\begin{figure} \centering
\includegraphics[scale=0.7]{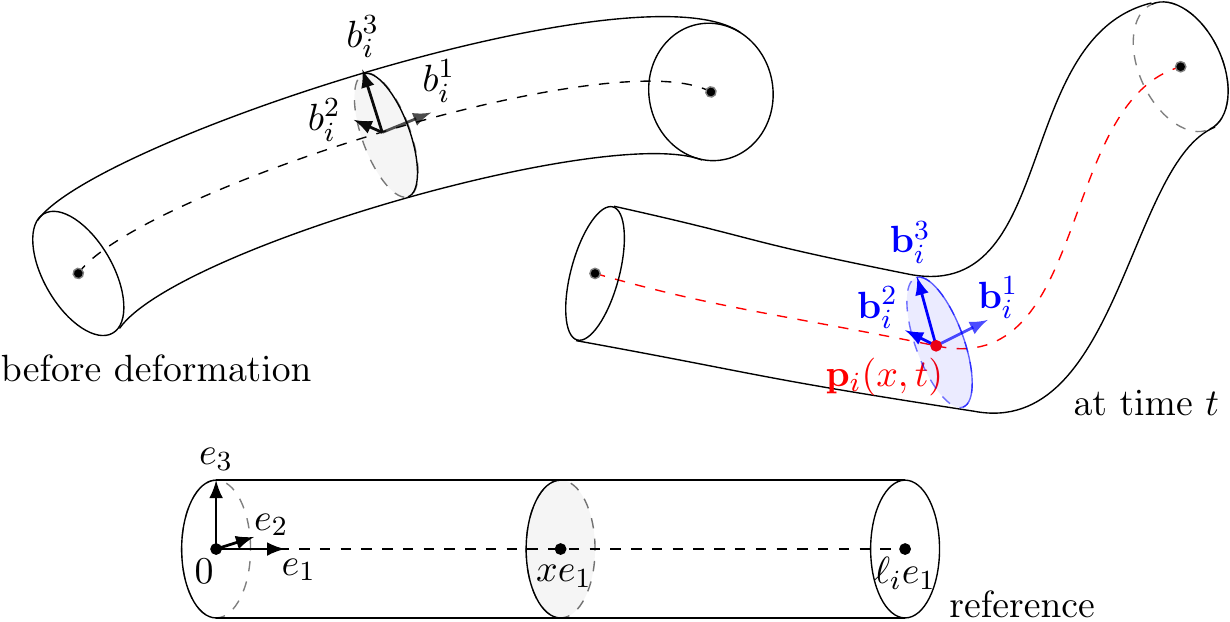}
\caption{Beam $i$ in a straight reference configuration, before deformation and at time $t$. Here, $\{b_i^\alpha\}_{\alpha=1}^3$ denote the columns of $R_i$.}
\label{fig:beam_netw}
\end{figure}
 
Let $i$ be the index of any beam of the network.
First, we consider the mathematical model written in terms of the position $\mathbf{p}_i$ of the centerline and of a rotation matrix $\mathbf{R}_i$ whose columns $\{\mathbf{b}_i^\alpha\}_{\alpha=1}^3$ give the orientation of the cross sections. Both $\mathbf{p}_i$ and $\mathbf{R}_i$ depend on $x$ and $t$, with $x\in [0, \ell_i]$ where $\ell_i>0$ is the length of the beam, and both are expressed in the fixed basis $\{e_\alpha\}_{\alpha=1}^3$.
The former has values in $\mathbb{R}^3$, while the latter has values in the special orthogonal group $\mathrm{SO}(3)$.\footnote{$\mathrm{SO}(3)$ is the set of unitary real matrices of size $3$ and with a determinant equal to $1$, also called \emph{rotation} matrices.}

The columns of $\mathbf{R}_i$ may also be seen as a moving basis of $\mathbb{R}^3$, attached to the beam, and with origin $\mathbf{p}_i$; we call it \emph{body-attached basis} as opposed to the fixed basis $\{e_\alpha\}_{\alpha=1}^3$. We refer to Fig. \ref{fig:beam_netw} for visualization.

The corresponding model is called the \emph{Geometrically Exact Beam} model (GEB) and, for a freely vibrating beam, is set in $(0, \ell_i)\times(0, T)$ and reads
\begin{linenomath}
\begin{equation}
\label{eq:GEB_pres}
\partial_t \left( \begin{bmatrix}
\mathbf{R}_i & \mathbf{0}_{3}\\ \mathbf{0}_{3} & \mathbf{R}_i
\end{bmatrix} \mathbf{M}_i
\begin{bmatrix}
V_i \\ W_i
\end{bmatrix}
\right) = \partial_x \begin{bmatrix}
\phi_i \\ \psi_i \end{bmatrix} + \begin{bmatrix}
\mathbf{0}_{3, 1} \\ (\partial_x \mathbf{p}_i) \times \phi_i
\end{bmatrix},
\end{equation}
\end{linenomath}
where $V_i, W_i, \phi_i, \psi_i$ are functions of the unknowns $\mathbf{p}_i, \mathbf{R}_i$. More precisely, we introduce the linear velocity $V_i$, angular velocity $W_i$, internal forces $\Phi_i$ and internal moments $\Psi_i$ of the beam $i$, all having values in $\mathbb{R}^3$ and being expressed in the body-attached basis. They are defined by (see Subsection \ref{subsec:notation})
\begin{linenomath}
\begin{equation} \label{eq:single_beam_VWPhiPsi}
\begin{bmatrix}
V_i \\ W_i
\end{bmatrix}
= \begin{bmatrix}
\mathbf{R}_i^\intercal \partial_t \mathbf{p}_i\\ \mathrm{vec}\left( \mathbf{R}_i^\intercal \partial_t \mathbf{R}_i \right)
\end{bmatrix}, \quad 
\begin{bmatrix}
\Phi_i \\ \Psi_i
\end{bmatrix}= \mathbf{C}_i^{-1} \begin{bmatrix}
\mathbf{R}_i ^\intercal \partial_x \mathbf{p}_i  - e_1 \\ 
\mathrm{vec}\left(\mathbf{R}_i^\intercal \partial_x \mathbf{R}_i - R_i^\intercal \tfrac{\mathrm{d}}{\mathrm{d}x} R_i\right)
\end{bmatrix},
\end{equation}
\end{linenomath}
while the variables $\phi_i, \psi_i$ just correspond to $\Phi_i, \Psi_i$ when expressed in the fixed basis instead of the body-attached basis; in other words 
\begin{linenomath}
\begin{equation} \label{eq:def_smallphipsii}
\phi_i = \mathbf{R}_i \Phi_i, \quad \psi_i = \mathbf{R}_i \Psi_i.
\end{equation}
\end{linenomath}
In the above governing system and definitions, 
\begin{linenomath}
\begin{equation} \label{eq:reg_beampara}
\mathbf{M}_i, \mathbf{C}_i \in C^1([0, \ell_i]; \mathcal{S}_{++}^6), \quad R_i \in C^2([0, \ell_i]; \mathrm{SO}(3))
\end{equation}
\end{linenomath}
are the so-called \emph{mass matrix} $\mathbf{M}_i$ and \emph{flexibility matrix} $\mathbf{C}_i$ which characterize the material and geometry of the beam $i$, while $R_i$ characterizes the initial form of this beam, as it may be pre-curved and twisted before deformation (at rest). All three are given parameters of the beam.

\begin{remark}
Consider a single beam $i$ described by \eqref{eq:GEB_pres}, with homogeneous Neumann boundary conditions at each end -- i.e., both $\phi_i$ and $\psi_i$ are identically equal to zero on $\{0\}\times(0, T)$ and $\{\ell\}\times (0, T)$.
With appropriate initial conditions, rigid body motions such as defined below are solutions to the GEB model:
\begin{linenomath}
\begin{align} \label{eq:rigid_body_motion}
\mathbf{p}_i(x,t) = f(t) + \int_0^x R_i(s)e_1 ds, \qquad \mathbf{R}_i(x,t) = K(t)R_i(x)
\end{align}
\end{linenomath}
for all $(x,t) \in [0, \ell_i]\times[0, T]$, where $(f, K) \in C^2([0, T]; \mathbb{R}^3 \times \mathrm{SO}(3))$ 
are such that $\frac{\mathrm{d}}{\mathrm{d}t}f \equiv f_\circ$ and $\mathrm{vec}(K^\intercal \frac{\mathrm{d}}{\mathrm{d}t}K) \equiv k_\circ$ for some fixed $f_\circ, k_\circ \in \mathbb{R}^3$.
\end{remark}

\noindent The mathematical model may also be written in terms of intrinsic variables expressed in the body-attached basis, namely, linear/angular velocities and internal forces/moments $v_i, z_i \colon [0, \ell_i]\times[0, T] \rightarrow \mathbb{R}^6$, respectively. In this case, one considers the unknown state $y_i \colon [0, \ell_i]\times[0, T] \rightarrow \mathbb{R}^{12}$ of the form
\begin{linenomath}
\begin{align} \label{eq:form_yi}
y_i = \begin{bmatrix}
v_i \\ z_i
\end{bmatrix}, \quad \text{where} \quad v_i = \begin{bmatrix}
V_i \\ W_i
\end{bmatrix}, \ z_i = \begin{bmatrix}
\Phi_i \\ \Psi_i
\end{bmatrix}.
\end{align}
\end{linenomath}
We call the corresponding model the \emph{Intrinsic Geometrically Exact Beam} model (IGEB), and it reads
\begin{linenomath}
\begin{align}
\label{eq:IGEB_pres}
\partial_t y_i + A_i(x) \partial_x y_i + \overline{B}_i(x) y_i = \overline{g}_i(x, y_i),
\end{align}
\end{linenomath}
where the coefficients $A_i,\overline{B}_i$ and the source $\overline{g}_i$ depend on $\mathbf{M}_i, \mathbf{C}_i$ and $R_i$. 
More precisely, $A_i \in C^1([0, \ell_i]; \mathbb{R}^{12 \times 12})$ is defined by (see \eqref{eq:reg_beampara})
\begin{linenomath}
\begin{align}\label{eq:def_Ai}
A_i = - \begin{bmatrix}
\mathbf{0}_6 & \mathbf{M}_i^{-1}\\
\mathbf{C}_i^{-1} & \mathbf{0}_6
\end{bmatrix},
\end{align}
\end{linenomath}
and we will see, in Subsection \ref{subsec:hyperbolic}, that the matrix $A_i(x)$ is hyperbolic for all $x \in [0, \ell_i]$ (i.e., it has real eigenvalues only, with twelve associated independent eigenvectors).

The matrix $\overline{B}_i(x)$ is indefinite and, up to the best of our knowledge, may not be assumed arbitrarily small
implying not only that the linearized system \eqref{eq:IGEB_pres} is not homogeneous, but also that \eqref{eq:IGEB_pres} cannot be seen as the perturbation of a system of conservation laws. The function $\overline{B}_i \in C^1([0, \ell_i];\mathbb{R}^{12 \times 12})$ which depends, just as $A_i$, on the mass and flexibility matrices, also depends on the curvature $\Upsilon_c^i \colon [0, \ell_i] \rightarrow \mathbb{R}^3$ of the beam before deformation, and is defined by
\begin{linenomath}
\begin{align*}
\overline{B}_i = \begin{bmatrix}
\mathbf{0}_6 & - \mathbf{M}^{-1}_i\mathbf{E}_i\\
\mathbf{C}_i^{-1}\mathbf{E}_i^\intercal & \mathbf{0}_6
\end{bmatrix}, \quad \text{with} \ \ \mathbf{E}_i = \begin{bmatrix}
\widehat{\Upsilon}_c^i &  \mathbf{0}_3\\
\widehat{e}_1 & \widehat{\Upsilon}_c^i
\end{bmatrix}, \quad \Upsilon_c^i = \mathrm{vec}\big(R_i^\intercal \tfrac{\mathrm{d}}{\mathrm{d}x} R_i \big).
\end{align*}
\end{linenomath}

The function $\overline{g}_i \colon [0, \ell_i]\times \mathbb{R}^{12} \rightarrow \mathbb{R}^{12}$ is defined by $\overline{g}_i(x, u) = \overline{\mathcal{G}}_i(x, u)u$ for all $x \in [0, \ell_i]$ and $u=(u_1^\intercal, u_2^\intercal, u_3^\intercal, u_4^\intercal)^\intercal \in \mathbb{R}^{12}$ with each $u_j \in \mathbb{R}^3$, where the map  $\overline{\mathcal{G}}_i$ is defined by (see Subsection \ref{subsec:notation})
\begin{linenomath}
\begin{align*}
\overline{\mathcal{G}}_i(x,u) = - 
\begin{bmatrix}
\mathbf{M}_i(x)^{-1} & \mathbf{0}_6\\
\mathbf{0}_6 & \mathbf{C}_i(x)^{-1}
\end{bmatrix}
\begin{bmatrix}
\widehat{u}_2 & \mathbf{0}_3 & \mathbf{0}_3 & \widehat{u}_3\\
\widehat{u}_1 & \widehat{u}_2 & \widehat{u}_3 & \widehat{u}_4 \\
\mathbf{0}_3 & \mathbf{0}_3 & \widehat{u}_2 & \widehat{u}_1\\
\mathbf{0}_3 & \mathbf{0}_3 & \mathbf{0}_3 & \widehat{u}_2
\end{bmatrix} 
\begin{bmatrix}
\mathbf{M}_i(x) & \mathbf{0}_6\\
\mathbf{0}_6 & \mathbf{C}_i(x)
\end{bmatrix}.
\end{align*}
\end{linenomath}
One sees that $\overline{g}_i$ is a quadratic nonlinearity (in the sense that its components are quadratic forms on $\mathbb{R}^{12}$ with respect to the second argument), and that it has the same regularity as the mass and flexibility matrices $\mathbf{M}_i, \mathbf{C}_i$ with respect to its first argument, and is $C^\infty$ with respect to its second argument. Moreover, $\overline{g}_i(x, \cdot)$ is locally Lipschitz in $\mathbb{R}^{12}$ for any $x \in [0, \ell_i]$, and $\overline{g}_i$ is locally Lipschitz in $H^1(0, \ell_i; \mathbb{R}^{12})$, but no global Lipschitz property is available.

\medskip

\noindent Finally, as mentionned in the introduction, one may see \eqref{eq:GEB_pres} and \eqref{eq:IGEB_pres} as being related by the nonlinear transformation $\mathcal{T}$ defined by (see \eqref{eq:single_beam_VWPhiPsi})
\begingroup 
\renewcommand*{\arraystretch}{0.9}
\begin{linenomath}
\begin{equation} \label{eq:transfo}
\mathcal{T}((\mathbf{p}_i, \mathbf{R}_i)_{i \in \mathcal{I}}) = (\mathcal{T}_i(\mathbf{p}_i, \mathbf{R}_i))_{i \in \mathcal{I}}, \quad \text{where} \quad
\mathcal{T}_i (\mathbf{p}_i, \mathbf{R}_i) = 
\begin{bmatrix} V_i\\ W_i\\ \Phi_i\\ \Psi_i\end{bmatrix}.
\end{equation}
\end{linenomath}
\endgroup

\subsection{Dynamics of the network of beams}
\label{subsec:network_systems}

\begin{figure}
  \begin{subfigure}{0.2\textwidth}
  \centering
    \includegraphics[height=3.25cm]{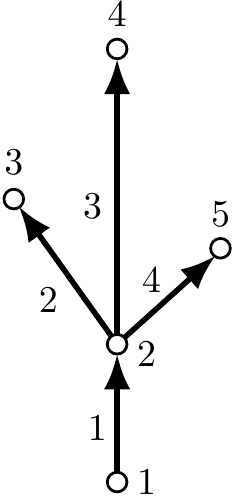}
    \caption{Star-shaped}
  \end{subfigure}%
  \begin{subfigure}{0.24\textwidth}
    \centering
    \includegraphics[height=3.25cm]{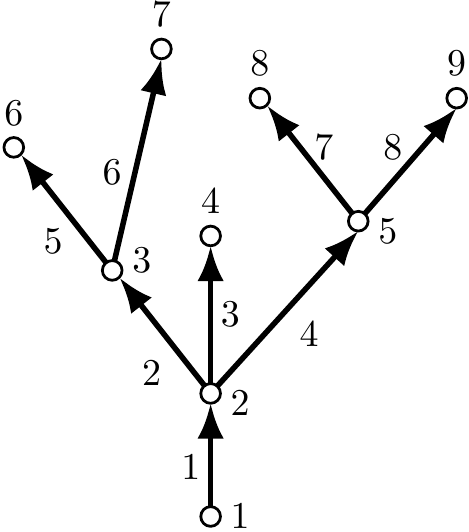}
    \caption{Tree-shaped} 
  \end{subfigure}%
  \begin{subfigure}{0.28\textwidth}
    \centering
    \includegraphics[height=3.25cm]{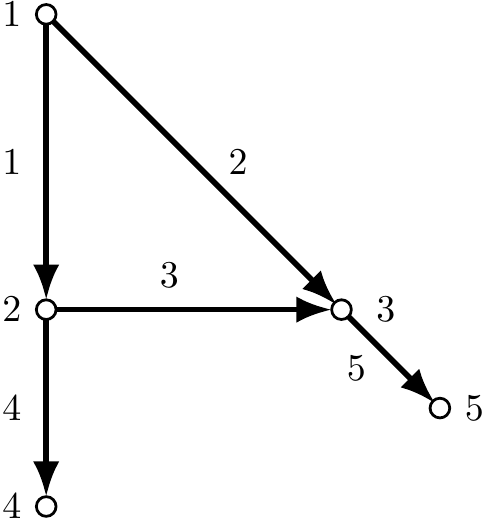}
    \caption{A-shaped} 
    \label{subfig:AshapedNetwork}
  \end{subfigure}%
    \hspace*{\fill}   
  \begin{subfigure}{0.26\textwidth}
    \centering
    \includegraphics[width=2.75cm]{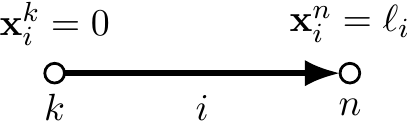}
\caption{Orientation of an edge $i$ starting and ending at the nodes $k$ and $n$, respectively.}
\label{fig:xin}
  \end{subfigure}

\caption{Some oriented graphs representing beam networks, and orientation of the edges.}
\label{fig:exple_networks}
\end{figure}

Let us now give the systems describing the entire beam network.

\subsubsection{Network notation}
To represent a collection of $N$ beams attached in a certain manner to each other at their tips, we use an oriented graph containing $N$ edges. Any edge $i$ is identified with the interval $[0, \ell_i]$, which is the spatial domain for the beam model in question (GEB or IGEB). Hence, just as for the beams, the \emph{edges} are indexed by $i \in \mathcal{I} = \{1, \ldots, N\}$, while the \textit{nodes} are indexed by $n \in \mathcal{N} = \{1, \ldots, \#\mathcal{N}\}$, where $\#$ denotes the set cardinality. The set of nodes is partitioned as $\mathcal{N} = \mathcal{N}_S \cup \mathcal{N}_M$, where $\mathcal{N}_S$ is the set of indexes of \emph{simple nodes}, while $\mathcal{N}_M$ is the set of indexes \emph{multiple nodes}. 

The former set is in addition partitioned as $\mathcal{N}_S = \mathcal{N}_S^D \cup \mathcal{N}_S^N$, where $\mathcal{N}_S^D$ contains the simple nodes with prescribed \emph{Dirichlet} boundary conditions (i.e., the centerline's position and the cross section's orientation in the case of the GEB model, or the velocities in the case of the IGEB model, are prescribed), while $\mathcal{N}_S^N$ contains the simple nodes with prescribed \emph{Neumann} boundary conditions (i.e., the internal forces and moments are prescribed).

\medskip

\noindent For any $n\in\mathcal{N}$, we denote by $\mathcal{I}^n$ the set of indexes of edges incident to the node $n$, by $k_n = \# \mathcal{I}^n $ the \emph{degree} of the node $n$, and by $i^n$ the index\footnote{Defining $i^n$ as the \emph{smallest} element of $\mathcal{I}^n$, and not the \emph{largest} for example, is an arbitrary choice and is of no influence here.}
\begin{linenomath}
\begin{align} \label{eq:def_in}
i^n = \min_{i\in \mathcal{I}^n} i.
\end{align}
\end{linenomath}
Note that in the case of a simple node, $\mathcal{I}^n = \{i^n\}$.

The orientation of each beam is given by the variables $\mathbf{x}_i^n$ and $\tau_i^n$ defined as follows.
For any $i \in \mathcal{I}^n$, we denote by $\mathbf{x}_i^n$ the end of the interval $[0, \ell_i]$ which corresponds to the node $n$, while $\tau_i^n$ is the outward pointing normal at $\mathbf{x}_i^n$: 
\begin{linenomath}
\begin{align*}
\tau_i^n = 
\left\{ 
\begin{aligned}
-1 \qquad & \text{if } \mathbf{x}_i^n = 0,\\
+1 \qquad &\text{if } \mathbf{x}_i^n = \ell_i.
\end{aligned}
\right.
\end{align*}
\end{linenomath}
As described in Fig. \ref{fig:exple_networks}, each edge $i$ is represented by an arrow and each node $n$ by a circle. The arrowhead is at the ending point $x=\ell_i$; see Fig. \ref{fig:xin}.

\subsubsection{The network model}

Let $T > 0$.
If all beams are described by the GEB model \eqref{eq:GEB_pres}, then the overall network is described by System \eqref{eq:GEB_netw} below, which gives the dynamics of the unknown state $(\mathbf{p}_i, \mathbf{R}_i)_{i \in \mathcal{I}}$:
\begin{linenomath}
\begin{subnumcases}{\label{eq:GEB_netw}}
\nonumber
\partial_t \left( 
\left[\begin{smallmatrix}
\mathbf{R}_i & \mathbf{0}_3\\
\mathbf{0}_3 & \mathbf{R}_i
\end{smallmatrix}\right]
\mathbf{M}_i
\left[\begin{smallmatrix}
V_i \\ W_i
\end{smallmatrix}\right]
\right) & \\
\label{eq:GEB_gov}
\hspace{1cm}= \partial_x \left[\begin{smallmatrix}
\phi_i \\ \psi_i \end{smallmatrix} \right] + \left[\begin{smallmatrix}
\mathbf{0}_{3, 1} \\ (\partial_x \mathbf{p}_i) \times \phi_i
\end{smallmatrix} \right] 
&$\text{in } (0, \ell_i)\times(0, T), \, i \in \mathcal{I}$\\
\label{eq:GEB_continuity_pi}
\mathbf{p}_i(\mathbf{x}_i^n, t)  = \mathbf{p}_{i^n}(\mathbf{x}_{i^n}^n, t) &$t \in (0, T), \, i \in \mathcal{I}^n, \, n \in \mathcal{N}_M$\\
\label{eq:GEB_rigid_angles}
(\mathbf{R}_i R_{i}^\intercal)(\mathbf{x}_i^n, t) = (\mathbf{R}_{i^n} R_{i^n}^\intercal)(\mathbf{x}_{i^n}^n, t) &$t \in (0, T), \, i \in \mathcal{I}^n, \, n \in \mathcal{N}_M$\\
\label{eq:GEB_Kirchhoff}
{\textstyle \sum_{i\in\mathcal{I}^n}} \tau_i^n \left[ \begin{smallmatrix} 
\phi_i \\ \psi_i
\end{smallmatrix} \right] (\mathbf{x}_i^n, t) = f_n (t)
&$t \in (0, T), \, n \in \mathcal{N}_M$\\
\label{eq:GEB_condNSz}
\tau_{i^n}^n \left[ \begin{smallmatrix} 
\phi_{i^n} \\ \psi_{i^n}
\end{smallmatrix} \right] (\mathbf{x}_{i^n}^n, t) = f_n (t) &$t \in (0, T), \, n \in \mathcal{N}_S^N$\\
\label{eq:GEB_condNSv_p_R}
(\mathbf{p}_{i^n}, \mathbf{R}_{i^n})(\mathbf{x}_{i^n}^n, t) = (f_n^\mathbf{p}, f_n^\mathbf{R})(t) &$t \in (0, T), \, n \in \mathcal{N}_S^D$\\
\label{eq:GEB_IC_0ord}
(\mathbf{p}_i, \mathbf{R}_i)(x, 0) = (\mathbf{p}_i^0, \mathbf{R}_i^0)(x) &$x \in (0, \ell_i), \, i \in \mathcal{I}$\\
\label{eq:GEB_IC_1ord}
(\partial_t \mathbf{p}_i, \mathbf{R}_i W_i)(x, 0) = (\mathbf{p}_i^1, w_i^0)(x) &$x \in (0, \ell_i), \, i \in \mathcal{I}$,
\end{subnumcases}
\end{linenomath}
where we recall that $V_i, W_i, \phi_i, \psi_i$ are defined in \eqref{eq:single_beam_VWPhiPsi}-\eqref{eq:def_smallphipsii}.
In this system, \eqref{eq:GEB_IC_0ord}-\eqref{eq:GEB_IC_1ord} describe the initial conditions, with data
\begin{linenomath}
\begin{align} \label{eq:reg_Idata_GEB}
(\mathbf{p}_i^0, \mathbf{R}_i^0) \in C^2([0, \ell_i]; \mathbb{R}^3 \times \mathrm{SO}(3)), \quad \mathbf{p}_i^1, w_i^0 \in C^1([0, \ell_i]; \mathbb{R}^3), \quad i \in\mathcal{I}.
\end{align}
\end{linenomath}
Then, \eqref{eq:GEB_continuity_pi}-\eqref{eq:GEB_rigid_angles}-\eqref{eq:GEB_Kirchhoff}
are the so-called \emph{transmission} (or \emph{interface}) conditions for multiple nodes, while the conditions \eqref{eq:GEB_condNSz}-\eqref{eq:GEB_condNSv_p_R} are enforced at simple nodes. The nodal data is
\begin{linenomath}
\begin{align}
\label{eq:reg_Ndata_GEB_N}
f_n \in C^1([0, T]; \mathbb{R}^{6}), \quad &n \in \mathcal{N}_M \cup \mathcal{N}_S^N\\
\label{eq:reg_Ndata_GEB_D}
(f_n^\mathbf{p}, f_n^\mathbf{R}) \in C^2([0, T]; \mathbb{R}^3\times \mathrm{SO}(3)), \quad &n \in\mathcal{N}_S^D.
\end{align}
\end{linenomath}

\medskip

\noindent On the other hand, if all beams are described by the IGEB model \eqref{eq:IGEB_pres}, then for the overall network, the unknown state $(y_i)_{i \in \mathcal{I}}$ is described by System \eqref{eq:syst_physical}, which reads
\begin{linenomath}
\begin{subnumcases}{\label{eq:syst_physical}}
\label{eq:IGEB_gov}
\partial_t y_i + A_i \partial_x y_i + \overline{B}_i y_i = \overline{g}_i(\cdot,y_i) &$\text{in } (0, \ell_i)\times(0, T), \, i \in \mathcal{I}$\\
\label{eq:IGEB_cont_velo}
(\overline{R}_i v_i)(\mathbf{x}_i^n, t) = (\overline{R}_{i^n} v_{i^n})(\mathbf{x}_{i^n}^n, t) &$t \in (0, T), \, i \in \mathcal{I}^n, \, n \in \mathcal{N}_M$\\
\label{eq:IGEB_Kirchhoff}
\sum_{i\in\mathcal{I}^n} \tau_i^n (\overline{R}_i z_i)(\mathbf{x}_i^n, t) = q_n(t) &$t \in (0, T), \, n \in \mathcal{N}_M$\\
\label{eq:IGEB_condNSz}
\tau_{i^n}^n z_{i^n} (\mathbf{x}_{i^n}^n, t) = q_n(t) &$t \in (0, T), \, n \in \mathcal{N}_S^N$\\
\label{eq:IGEB_condNSv}
v_{i^n}(\mathbf{x}_{i^n}^n, t) = q_n(t) &$t \in (0, T), \, n \in \mathcal{N}_S^D$\\
\label{eq:IGEB_ini_cond}
y_i(x, 0) = y_i^0(x) &$x \in (0, \ell_i), \, i \in \mathcal{I}$,
\end{subnumcases}
\end{linenomath}
$v_i,z_i$ representing the first and last six components of $y_i$, respectively (see \eqref{eq:form_yi}), and where $\overline{R}_i \in C^2([0, \ell_i]; \mathbb{R}^{6 \times 6})$ is defined by $\overline{R}_i = \mathrm{diag}(R_i, R_i)$ (see \eqref{eq:reg_beampara}).
Here, \eqref{eq:IGEB_ini_cond} gives the initial conditions, with data 
\begin{linenomath}
\begin{align} \label{eq:reg_Idata_IGEB}
y_i^0 \in C^1([0, \ell_i]; \mathbb{R}^{12}), \quad i \in \mathcal{I},
\end{align}
\end{linenomath}
the transmission conditions are \eqref{eq:IGEB_cont_velo}-\eqref{eq:IGEB_Kirchhoff}, while the conditions \eqref{eq:IGEB_condNSz}-\eqref{eq:IGEB_condNSv} are imposed at simple nodes, with data
\begin{linenomath}
\begin{align}\label{eq:reg_Ndata_IGEB}
q_n \in C^1([0, T]; \mathbb{R}^{6}), \quad n \in \mathcal{N}.
\end{align}
\end{linenomath}

\subsubsection{Origin of the nodal conditions}

As the form of transmission conditions is an essential aspect in the proof of nodal profile controllability of hyperbolic systems on networks, let us now explain the origin of these conditions for System \eqref{eq:GEB_netw} and especially those of System \eqref{eq:syst_physical}. See also \cite{R2020} for a more detailed presentation, and for the meaning of the states and coefficients of \eqref{eq:GEB_netw} and \eqref{eq:syst_physical}.

\medskip

\noindent Let $n$ be the index of some multiple node. 
In this work, we assume that, at all times, the beams incident with this node remain attached to each other. In other words, as imposed by \eqref{eq:GEB_continuity_pi}, the position of their centerlines must coincide. 
Moreover, we work under the \emph{rigid joint} assumption, namely, at any node, there is no relative motion between the incident beams. As the orientation of the cross sections before deformation is specified by the (given) function $R_i$, the rigid joint assumption is enforced by the condition \eqref{eq:GEB_rigid_angles} which states that the change of orientation $\mathbf{R}_iR_i^\intercal$ (from the undeformed state of the beam network to its state at time $t$) is the same for all incident beams. See also \cite[Subsection 2.4]{strohm_dissert}.

For the IGEB model, the condition corresponding to the continuity of the centerline's position and of the change of the cross section's orientation, is the \emph{continuity} of velocities \eqref{eq:IGEB_cont_velo}. Indeed, one may differentiate \eqref{eq:GEB_continuity_pi} and \eqref{eq:GEB_rigid_angles} with respect to time, and then left-multiply each of the obtained equations by $(R_j\mathbf{R}_j^\intercal)(\mathbf{x}_j^n, t)$ for the corresponding beam index $j$ (thereby using the rigid joint assumption), to obtain
\begin{linenomath}
\begin{align*}
(R_i\mathbf{R}_i^\intercal \partial_t \mathbf{p}_i)(\mathbf{x}_i^n, t) &= (R_{i^n}\mathbf{R}_{i^n}^\intercal \partial_t \mathbf{p}_{i^n})(\mathbf{x}_{i^n}^n, t), \\
(R_i\mathbf{R}_i^\intercal \partial_t \mathbf{R}_i R_i^\intercal)(\mathbf{x}_i^n, t) &= (R_{i^n}\mathbf{R}_{i^n}^\intercal \partial_t \mathbf{R}_{i^n} R_{i^n}^\intercal)(\mathbf{x}_{i^n}^n, t),
\end{align*}
\end{linenomath}
respectively. The above equations turn out to equate to \eqref{eq:IGEB_cont_velo}, by the definition of $V_i$ and $W_i$ (see \eqref{eq:single_beam_VWPhiPsi}), and by using that the invariance of the cross product in $\mathbb{R}^3$ under rotation provides the identity $R_i \widehat{W}_iR_i^\intercal = \widehat{R_iW_i}$.

\medskip

\noindent Furthermore, at this multiple node $n$, we require the internal forces $\phi_i$ and moments $\psi_i$ exerted by incident beams $i\in\mathcal{I}_n$ to be balanced with the external load $f_n$ applied at this node, which reads as \eqref{eq:GEB_Kirchhoff}, and is also called the \emph{Kirchhoff} condition.

The corresponding Kirchhoff condition \eqref{eq:IGEB_Kirchhoff} for the IGEB model is then obtained by left-multiplying each term in the right-hand side of \eqref{eq:GEB_Kirchhoff} by $(R_i \mathbf{R}_i^\intercal)(\mathbf{x}_i^n, t)$ for the corresponding index $i$ (once again using the rigid joint assumption), left-multiplying $f_n$ by $(R_i \mathbf{R}_i^\intercal)(\mathbf{x}_i^n, t)$ for some $i \in \mathcal{I}_n$ (for instance as $i^n$), and recalling the relationship between $\phi_i, \psi_i$ and $\Phi_i, \Psi_i$ (see \eqref{eq:def_smallphipsii}).

\medskip

\noindent Similar considerations hold for simple nodes. Here, either $n \in \mathcal{N}_S^N$ and an external load $f_n$ is applied at this node, yielding the condition \eqref{eq:GEB_condNSz}, or $n \in \mathcal{N}_S^D$ and the centerline's position and cross section's orientation are prescribed as $f_n^\mathbf{p}$ and $f_n^\mathbf{R}$, respectively, for the beam $i^n$ incident with this node, yielding the condition \eqref{eq:GEB_condNSv_p_R}.

For the IGEB model, this translates to \eqref{eq:IGEB_condNSz} and \eqref{eq:IGEB_condNSv}, respectively, when one left-multiplies \eqref{eq:GEB_condNSz} and \eqref{eq:GEB_condNSv_p_R} by $(R_{i^n}\mathbf{R}_{i^n}^\intercal)(\mathbf{x}_{i^n}^n, t)$ and $\mathbf{R}_{i^n}^\intercal(\mathbf{x}_{i^n}^n, t)$, respectively.

\subsubsection{Relationship between the data of both systems}

As mentioned earlier, the unknowns of the GEB and IGEB models are related by the transformation $\mathcal{T}$, defined in \eqref{eq:transfo}.
Thus, the initial data of both models are related as follows: for given $\mathbf{p}_i^0, \mathbf{R}_i^0, \mathbf{p}_i^1$ and $w_i^0$, one has 
\begin{linenomath}
\begin{align} \label{eq:rel_inidata}
y_i^0 = \begin{bmatrix}
v_i^0 \\ z_i^0
\end{bmatrix}, \quad
v_i^0 = \begin{bmatrix}
(\mathbf{R}_i^0)^{\intercal} \mathbf{p}_i^1 \\
(\mathbf{R}_i^0 )^{\intercal} w_i^0
\end{bmatrix}
, \quad
z_i^0 = \mathbf{C}_i^{-1} 
\begin{bmatrix}
(\mathbf{R}_i^0)^{\intercal} \frac{\mathrm{d}}{\mathrm{d}x} \mathbf{p}_i^0 - e_1\\
\mathrm{vec}\left( (\mathbf{R}_i^0)^{\intercal} \frac{\mathrm{d}}{\mathrm{d}x}\mathbf{R}_i^0 - R_i^{\intercal}\frac{\mathrm{d}}{\mathrm{d}x} R_i \right)
\end{bmatrix}.
\end{align}
\end{linenomath}
Similarly, the nodal conditions of \eqref{eq:GEB_netw} and \eqref{eq:syst_physical} are connected via $\mathcal{T}$, and with the help of the above considerations on the nodal conditions, one can observe the following relationships between the nodal data of both systems.
For any $n \in \mathcal{N}_S^D$, for given $(f_n^\mathbf{p}, f_n^\mathbf{R})$ of regularity \eqref{eq:reg_Ndata_GEB_D}, one has
\begin{linenomath}
\begin{align} \label{eq:def_qn_D}
q_n = \begin{bmatrix}
(f_n^\mathbf{R})^\intercal \frac{\mathrm{d}}{\mathrm{d}t}f_n^\mathbf{p}\\
(f_n^\mathbf{R})^\intercal \frac{\mathrm{d}}{\mathrm{d}t}f_n^\mathbf{R}
\end{bmatrix},
\end{align}
\end{linenomath}
while for any $n \in \mathcal{N}_M \cup \mathcal{N}_S^N$,
\begin{linenomath}
\begin{align} \label{eq:def_fn}
f_n  = 
\left\{
\begin{aligned}
&\mathrm{diag}\left((\mathbf{R}_{i^n} R_{i^n}^\intercal )(\mathbf{x}_{i^n}^n, \cdot), (\mathbf{R}_{i^n} R_{i^n}^\intercal) (\mathbf{x}_{i^n}^n, \cdot)\right) q_n  &&n \in \mathcal{N}_M\\
&\mathrm{diag}\big(\mathbf{R}_{i^n}(\mathbf{x}_{i^n}^n, \cdot), \mathbf{R}_{i^n}(\mathbf{x}_{i^n}^n, \cdot)\big)  q_n  &&n \in \mathcal{N}_S^N.
\end{aligned}
\right.
\end{align}
\end{linenomath}

\subsection{Main results}
\label{subsec:main_results}

We may now present our main results, which are divided in two parts: one is concerned with the well-posedness and controllability of the IGEB network, and the other with showing that the transformation from the GEB to the IGEB network is invertible, by means of which one can deduce corresponding results for the former model. 

\subsubsection{Study of the IGEB model}

Let us define compatibility conditions for System \eqref{eq:syst_physical}. As for the unknown, we write the initial data $(y_i^0)_{i \in \mathcal{I}}$ as
\begin{linenomath}
\begin{equation*}
y_i^0 = \begin{bmatrix}
v_i^0 \\ z_i^0
\end{bmatrix} , \qquad \text{with } v_i^0, z_i^0 \colon [0, \ell_i] \rightarrow \mathbb{R}^6.
\end{equation*}
\end{linenomath}

\begin{definition}
We say that the initial data $y_i^0 \in C^1([0, \ell_i]; \mathbb{R}^{12})$, for all $i\in \mathcal{I}$, and boundary data $q_n\in C^0([0, T]; \mathbb{R}^6)$, for all $n \in \mathcal{N}$, fulfill the first-order compatibility conditions of \eqref{eq:syst_physical} if
\begin{linenomath}
\begin{equation} \label{eq:compat_0} 
\begin{aligned}
&(\overline{R}_i v_i^0)(\mathbf{x}_i^n) = (\overline{R}_j v_j^0)(\mathbf{x}_j^n) \qquad && i,j \in \mathcal{I}^n, \, n \in \mathcal{N}_M\\
&{\textstyle \sum_{i\in\mathcal{I}^n}} \tau_i^n (\overline{R}_i z_i^0)(\mathbf{x}_i^n) = q_n(0) && n \in \mathcal{N}_M\\
& \tau_{i^n}^n z_{i^n}^0(\mathbf{x}_{i^n}^n) = q_n(0) && n \in \mathcal{N}_S^N\\
& v_{i^n}^0 (\mathbf{x}_{i^n}^n) = q_n(0) && n \in \mathcal{N}_S^D,
\end{aligned}
\end{equation} 
\end{linenomath}
holds and  $y_i^1 \in C^0([0, \ell_i]; \mathbb{R}^{12})$, for all $i\in \mathcal{I}$, defined by 
\begin{linenomath}
\begin{equation*}
y_i^1 = -  A_i \frac{\mathrm{d}y_i^0}{\mathrm{d}x} - \overline{B}_i y_i^0 + \overline{g}_i(\cdot, y_i^0) = \begin{bmatrix}
v_i^1 \\z_i^1
\end{bmatrix},
\end{equation*}
\end{linenomath}
also fulfills \eqref{eq:compat_0}, where $v_i^0, z_i^0$ are replaced by $v_i^1, z_i^1$ respectively. 
\end{definition}

In order to ensure a certain regularity of the eigenvalues and eigenvectors of $A_i$, we will later on make the following assumption.

\begin{assumption} \label{as:mass_flex}
For all $i \in \mathcal{I}$, we suppose that
\begin{enumerate}
\item \label{eq:assump1_1} $\mathbf{C}_i, \mathbf{M}_i \in C^2([0, \ell_i]; \mathcal{S}_{++}^6)$;
\item \label{eq:assump1_2} the function $\Theta_i \in C^2([0, \ell_i]; \mathcal{S}_{++}^6)$ defined by $\Theta_i = (\mathbf{C}_i^{\sfrac{1}{2}} \mathbf{M}_i\mathbf{C}_i^{\sfrac{1}{2}})^{-1}$, is such that there exists $U_i, D_i \in C^2([0, \ell_i]; \mathbb{R}^{6 \times 6})$ for which
\begin{linenomath}
\begin{align*}
\Theta_i = U_i^\intercal D_i^2 U_i, \quad \text{in }[0, \ell_i],
\end{align*}
\end{linenomath}
where $D_i(x)$ is a positive definite diagonal matrix containing the square roots of the eigenvalues of $\Theta_i(x)$ as diagonal entries, while $U_i(x)$ is unitary.
\end{enumerate}
\end{assumption}

One may note that, in Assumption \ref{as:mass_flex}, if \ref{eq:assump1_1} holds, then \ref{eq:assump1_2} is readily verified if $\mathbf{M}_i, \mathbf{C}_i$ have values in the set of diagonal matrices, or if the eigenvalues of $\Theta_i(x)$ are distinct for all $x \in [0, \ell_i]$ (one may adapt \cite[Th. 2, Sec. 11.1]{evans2}). Clearly, \ref{eq:assump1_2} is also satisfied if $\mathbf{M}_i, \mathbf{C}_i$ are constant, entailing that the material and geometrical properties of the beam do not vary along its centerline.

\medskip

\noindent Our first task is to obtain the existence and uniqueness of semi-global in time solutions to \eqref{eq:syst_physical} for any network. Henceforth, in the norms' subscripts, when there is no ambiguity, we use the abbreviations $C_x^1 = C^1([0, \ell_i]; \mathbb{R}^d)$, $C_t^1 = C^1(I; \mathbb{R}^d)$ and $C_{x,t}^1 = C^1([0, \ell_i]\times I; \mathbb{R}^d)$ for the appropriate time interval $I$ and dimension $d \in \{1, 2, \ldots\}$.

\begin{theorem} \label{th:existence}
Consider a general network, suppose that $R_i$ has the regularity \eqref{eq:reg_beampara} and that Assumption \ref{as:mass_flex} is fulfilled.
Then, for any $T>0$, there exists $\varepsilon_0>0$ such that for all $\varepsilon \in (0, \varepsilon_0)$ and for some $\delta>0$, and all initial and boundary data $y_i^0, q_n$ of regularity \eqref{eq:reg_Idata_IGEB}-\eqref{eq:reg_Ndata_IGEB}, and satisfying $\|y_i^0\|_{C_x^1} +\|q_n\|_{C_t^1} \leq \delta$ and the first-order compatibility conditions of \eqref{eq:syst_physical}, there exists a unique solution $(y_i)_{i \in \mathcal{I}} \in \prod_{i=1}^N C^1([0, \ell_i] \times [0, T]; \mathbb{R}^{12})$ to \eqref{eq:syst_physical}, with $\|y_i\|_{C_{x,t}^1}\leq \varepsilon$.
\end{theorem}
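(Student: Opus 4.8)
The plan is to recast the governing equations \eqref{eq:syst_physical} as a semilinear \emph{diagonal} hyperbolic system in Riemann invariants and then invoke the semi-global $C^1$ existence theory for first-order hyperbolic systems on a bounded spatial domain (in the spirit of Li--Jin and Li--Rao), the only genuinely new ingredient being the reformulation of the network's transmission conditions into admissible boundary conditions for the diagonalized system. A useful simplification throughout is that the system is only \emph{semilinear}: since $A_i$ in \eqref{eq:def_Ai} depends on $x$ but not on $y_i$, the characteristic curves are fixed in advance and do not depend on the unknown, which makes the iteration below considerably cleaner than in the fully quasilinear case.

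First I would diagonalize the principal part. Writing the eigenvalue equation $A_i v = \mu v$ with $v = (a^\intercal, b^\intercal)^\intercal$ gives $\mathbf{M}_i^{-1}\mathbf{C}_i^{-1} a = \mu^2 a$, so that $\mu^2$ is an eigenvalue of $\mathbf{M}_i^{-1}\mathbf{C}_i^{-1}$; since $\mathbf{M}_i, \mathbf{C}_i \in \mathcal{S}_{++}^6$ this matrix is similar to the symmetric positive definite $\mathbf{C}_i^{-\sfrac{1}{2}}\mathbf{M}_i^{-1}\mathbf{C}_i^{-\sfrac{1}{2}} = \Theta_i$, whence $A_i(x)$ has exactly six positive and six negative real eigenvalues, none vanishing, for every $x$. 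This yields a definite splitting into six forward and six backward modes. Assumption \ref{as:mass_flex} supplies precisely the $C^2$ spectral factorization $\Theta_i = U_i^\intercal D_i^2 U_i$ needed to build a $C^1$ matrix $L_i(x)$ diagonalizing $A_i(x)$; setting $r_i = L_i y_i$ the governing equation becomes $\partial_t r_i + D_i(x)\partial_x r_i + \widetilde B_i(x) r_i = \widetilde g_i(x, r_i)$, with $D_i = \mathrm{diag}(\Lambda_i^+, \Lambda_i^-)$, $\Lambda_i^+ > 0 > \Lambda_i^-$, and $\widetilde g_i$ still quadratic and locally Lipschitz in $r_i$ (this is the content foreseen for Subsections \ref{subsec:hyperbolic}--\ref{subsec:change_var}).

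The crux is the nodal analysis (to be carried out along the lines of Subsection \ref{subsec:out_in_info}). The invariants with positive speed are \emph{incoming} at $x=0$ and \emph{outgoing} at $x=\ell_i$, and conversely for the negative ones; the number of incoming invariants meeting at each node must equal the number of scalar conditions imposed there. At a multiple node $n$ I would gather the velocity-continuity conditions \eqref{eq:IGEB_cont_velo} and the Kirchhoff condition \eqref{eq:IGEB_Kirchhoff}, rewrite them via $r_i = L_i y_i$, and show that the induced linear map from the incoming invariants to these conditions is invertible, so that the incoming invariants are solved explicitly in terms of the outgoing ones and the data $q_n$. This invertibility, which relies on the velocity/force block structure and on the positive-definiteness of $\mathbf{M}_i, \mathbf{C}_i$, is the main obstacle: it must be verified for every admissible node type, and one must confirm that the incoming-characteristic count matches the number of imposed conditions — this is exactly what distinguishes the network problem from the single-beam case. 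I emphasize that the indefinite, possibly large zeroth-order term $\widetilde B_i$ plays no role here, since it does not affect the boundary structure; the simple-node conditions \eqref{eq:IGEB_condNSz}--\eqref{eq:IGEB_condNSv} are treated analogously and more directly.

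With the system diagonalized and the boundary conditions put in the solved form $r^{\mathrm{in}} = \mathcal{F}(r^{\mathrm{out}}, \text{data})$, I would close the argument by a characteristic fixed-point iteration on $\prod_i C^1([0,\ell_i]\times[0,T])$: integrate each scalar equation along its (solution-independent) characteristic, feed the traces through the solved nodal relations, and iterate. Fixing $T$ first and then choosing $\delta$ small, the quadratic nonlinearity $\widetilde g_i$ together with the linear term $\widetilde B_i$ yield, via a Gronwall estimate on the fixed interval $[0,T]$, a uniform a priori bound $\|r_i\|_{C^1_{x,t}} \le \varepsilon$ for the iterates; the iteration map is a contraction in the $C^0$ metric for small data, producing a unique fixed point. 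Its $C^1$ regularity, the realization of the first-order compatibility conditions, and the final bound then transfer back to $y_i = L_i^{-1} r_i$, completing the proof of Theorem \ref{th:existence}.
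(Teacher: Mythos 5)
Your proposal follows essentially the same route as the paper: diagonalize $A_i$ via the $C^2$ spectral factorization of $\Theta_i$ supplied by Assumption \ref{as:mass_flex}, pass to Riemann invariants, rewrite the nodal conditions so that the characteristics entering each domain $[0,\ell_i]\times[0,T]$ are expressed explicitly in terms of those leaving it, and conclude by the semi-global $C^1$ theory (the paper rescales the beams to a common spatial interval, stacks them into one large first-order system and cites the general results of Li and Wang rather than re-running the characteristic fixed-point iteration, but this is the same underlying machinery). The one step you flag as the main obstacle but do not carry out --- invertibility of the nodal map at multiple nodes --- is indeed true for exactly the reason you guess: the relevant matrix $\mathbf{A}_n = \left[\begin{smallmatrix}\mathbf{a}_n & \mathbf{b}_n\\ \mathbf{c}_n & \mathbf{I}_{6(k_n-1)}\end{smallmatrix}\right]$ has Schur complement $\sum_{i\in\mathcal{I}^n}\sigma_i^n$, a sum of positive definite matrices, so the entering components are uniquely solvable in terms of the leaving ones and $q_n$.
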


The proof of Theorem \ref{th:existence}, given in Section \ref{sec:exist}, consists in rewriting \eqref{eq:syst_physical} as a single hyperbolic system and applying general well-posedness results \cite{li2010controllability, wang2006exact}. To do so, one has to write \eqref{eq:syst_physical} in Riemann invariants, the new unknown state being denoted $(r_i)_{i \in \mathcal{I}}$, and verify that the nodal conditions fulfill the following rule: at any node, the components of $r_i$ corresponding to characteristics \emph{entering} the domain $[0, \ell_i]\times [0, +\infty)$ at this node is expressed explicitly as a function of the components of $r_i$ corresponding to characteristics \emph{leaving} the domain $[0, \ell_i]\times [0, +\infty)$ at this node (more detail is given in Subsection \ref{subsec:out_in_info}).

\begin{remark} 
Assuming that $R_i \in C^2([0, \ell_i]; \mathrm{SO}(3))$ guaranties that $\overline{B}_i \in C^1([0, \ell_i]; \mathbb{R}^{12 \times 12})$. On the other hand, in Assumption \ref{as:mass_flex}, the extra regularity for $\mathbf{M}_i, \mathbf{C}_i$ ($C^2$, instead of $C^1$ as in \eqref{eq:reg_beampara}) permits us to ensure that the coefficients of the System \eqref{eq:syst_physical} written in Riemann invariants, in particular $B_i$ (see Subsection \ref{subsec:change_var}), are sufficiently regular.
\end{remark}

We now consider a problem of local exact boundary controllability of nodal profiles, for the specific case of the A-shaped network illustrated in Fig. \ref{subfig:AshapedNetwork}, consisting of five nodes and five edges and having one cycle. More precisely, we consider the network defined by
\begin{linenomath}
\begin{align} \label{eq:A_netw}
\begin{aligned}
&\mathcal{N}_S = \mathcal{N}_S^N = \{4, 5\}, \ \mathcal{N}_M = \{1, 2, 3\}, \ \mathcal{I} = \{1, \ldots, 5\}\\
&\mathbf{x}_1^1 = 0, \ \ \mathbf{x}_2^1 = 0, \ \ \mathbf{x}_3^2 = 0, \ \ \mathbf{x}_4^2 = 0, \ \ \mathbf{x}_5^3 = 0\\
&\mathbf{x}_1^2 = \ell_1, \ \mathbf{x}_2^3 = \ell_2, \ \mathbf{x}_3^3 = \ell_3, \ \mathbf{x}_4^4 = \ell_4, \ \mathbf{x}_5^5 = \ell_5.
\end{aligned}
\end{align}
\end{linenomath}
Let us first introduce some notation concerning the eigenvalues $\{\lambda_i^k (x)\}_{k=1}^{12}$ of $A_i(x)$ for $i \in \mathcal{I}$ and $x \in [0, \ell_i]$, which, as we will see in in Subsection \ref{subsec:hyperbolic}, are such that $\{\lambda_i^k\}_{k=1}^{12} \subset C^2([0, \ell_i])$ under Assumption \ref{as:mass_flex}, and
\begin{linenomath}
\begin{align} \label{eq:sign_eigval}
\lambda_i^k(x) <0 \ \text{ if } \ k\leq 6, \qquad \lambda_i^k(x) >0 \ \text{ if } \ k\geq 7.
\end{align}
\end{linenomath}
Also under Assumption \ref{as:mass_flex}, and for any $i\in\mathcal{I}$, we define $\Lambda_i \in C^0([0, \ell_i]; (0, +\infty))$ and $T_i>0$ by
\begin{linenomath}
\begin{align} \label{eq:def_Lambdai_Ti}
    \Lambda_i(x) = \left( \min_{k \in \{1, \ldots, 6\}} \left| \lambda_i^k(x) \right| \right)^{-1} \quad \text{and} \quad T_i = \int_0^{\ell_i} \Lambda_i(x) dx;
\end{align}
\end{linenomath}
note that the minimum ranges over the \textit{negative} eigenvalues of $A_i(x)$. 
The latter, $T_i$, corresponds to the transmission (or travelling) time from one end of the beam $i$ to its other end (see Section \ref{sec:controllability}).

\begin{theorem} \label{th:controllability}
Consider the A-shaped network defined by \eqref{eq:A_netw}.
Suppose that $R_i$ has the regularity \eqref{eq:reg_beampara} and that Assumption \ref{as:mass_flex} is fulfilled.
Let $\overline{T}>0$ be defined by (see \eqref{eq:def_Lambdai_Ti})
\begin{linenomath}
\begin{align} \label{eq:minT}
\overline{T} = \max \left\{T_1, T_2 \right\} + \max \left\{T_4, T_5 \right\}. 
\end{align}
\end{linenomath}
Then, for any $T> T^*>\overline{T}$, there exists $\varepsilon_0>0$ such that for all $\varepsilon \in (0, \varepsilon_0)$, for some $\delta, \gamma>0$, and
\begin{enumerate}[label=(\roman*)]
\item for all initial data $(y_i^0)_{i \in \mathcal{I}}$ and boundary data $(q_n)_{n \in \{1, 2, 3\}}$ of regularity \eqref{eq:reg_Idata_IGEB}-\eqref{eq:reg_Ndata_IGEB}, satisfying $\|y_i^0\|_{C_x^1} + \|q_n\|_{C_t^1} \leq \delta$ and the first-order compatibility conditions of \eqref{eq:syst_physical}, and
\item for all nodal profiles $\overline{y}_1, \overline{y}_2 \in C^1([T^*, T]; \mathbb{R}^{12})$, satisfying $\|\overline{y}_i\|_{C_t^1} \leq \gamma$ and the transmission conditions \eqref{eq:IGEB_cont_velo}-\eqref{eq:IGEB_Kirchhoff} at the node $n=1$,
\end{enumerate}
there exist controls $q_4, q_5 \in C^1([0, T]; \mathbb{R}^6)$ with $\|q_i\|_{C_t^1}\leq \varepsilon$, such that \eqref{eq:syst_physical} admits a unique solution $(y_i)_{i \in \mathcal{I}} \in \prod_{i=1}^N C^1([0, \ell_i] \times [0, T]; \mathbb{R}^{12})$, which fulfills $\|y_i\|_{C_x^1} \leq \varepsilon$ and
\begin{linenomath}
\begin{align}\label{eq:aim}
y_i(0, t) = \overline{y}_i(t) \quad \text{for all }i \in \{1, 2\}, \, t \in [T^*, T].
\end{align}
\end{linenomath}
\end{theorem}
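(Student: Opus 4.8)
The plan is to adapt the constructive method of Zhuang--Leugering--Li to the A-shaped network by exploiting both the forward and sidewise well-posedness of the IGEB system written in Riemann invariants. I would begin by diagonalizing \eqref{eq:syst_physical} on each edge as in Subsection \ref{subsec:change_var}, writing the state in Riemann invariants $r_i$ so that the characteristics split into $6$ negative-speed and $6$ positive-speed families, consistent with \eqref{eq:sign_eigval}. The key structural observation is that the transmission times $T_i$ in \eqref{eq:def_Lambdai_Ti} determine how far forward in time information must propagate, and the definition \eqref{eq:minT} of $\overline{T}$ encodes the fact that the charged node $n=1$ sits at the top of the cycle formed by beams $1,2,3$, while the two control nodes $4,5$ feed into the multiple nodes $2,3$ through beams $4,5$; the time $\max\{T_1,T_2\}$ accounts for propagation along the cycle branches up to node $1$, and $\max\{T_4,T_5\}$ for propagation from the controlled simple nodes to nodes $2,3$.

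\textbf{Forward solution on the uncontrolled part.} First I would invoke Theorem \ref{th:existence} to solve \eqref{eq:syst_physical} forward in time on the whole network up to time $T$, but only using the given data at nodes $1,2,3$ and \emph{provisional} (say zero, or arbitrary small admissible) data at the control nodes $4,5$; this produces a genuine $C^1_{x,t}$ solution on all five edges satisfying the initial and transmission conditions, with norm controlled by $\varepsilon$ provided $\delta,\gamma$ are small. This solution will generally \emph{not} meet the nodal profiles \eqref{eq:aim} at node $1$, so its role is only to supply correct Cauchy/transmission data along the cycle. The hard and genuinely network-specific step is to overwrite this solution near the charged node so that \eqref{eq:aim} holds, and then to propagate the correction back toward the control nodes by solving sidewise (leftward-in-$x$) Cauchy problems.

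\textbf{Sidewise construction from the charged node.} The core of the proof is the sidewise problem: on beams $1$ and $2$ (the two edges meeting at node $1$), the prescribed profiles $\overline{y}_1,\overline{y}_2$ at $x=0$ together with the compatibility imposed by the transmission conditions \eqref{eq:IGEB_cont_velo}--\eqref{eq:IGEB_Kirchhoff} at node $1$ give Cauchy data on the line $x=0$ for $t\in[T^*,T]$. Exchanging the roles of $x$ and $t$, the governing equation \eqref{eq:IGEB_gov} becomes a hyperbolic system in the space-like direction (this uses that all $\lambda_i^k(x)\ne 0$, i.e. $A_i$ is invertible), so I can solve it \emph{sidewise} toward increasing $x$ to determine $y_1,y_2$ on a region of the form $\{(x,t): x\in[0,\ell_i],\ t\in[\text{(some $x$-dependent lower limit)},T]\}$; the lower time limit shrinks as $x$ grows at rate governed by $\Lambda_i$, which is exactly why $T>T^*>\overline{T}$ is required. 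This determines, in particular, the traces $y_1(\ell_1,\cdot)=y_1(\mathbf{x}_1^2,\cdot)$ and $y_2(\ell_2,\cdot)=y_2(\mathbf{x}_2^3,\cdot)$ at the multiple nodes $2,3$ over the relevant time window.

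\textbf{Matching across the cycle and reading off the controls.} At nodes $2$ and $3$ I would use the transmission conditions \eqref{eq:IGEB_cont_velo}--\eqref{eq:IGEB_Kirchhoff}, together with the already-known traces of $y_3$ coming from the forward solution along the third cycle edge, to solve for the traces of $y_4,y_5$ at $\mathbf{x}_4^2$ and $\mathbf{x}_5^3$; here the crucial algebraic point is that the diagonalized nodal conditions express the \emph{outgoing} Riemann components (those entering beams $4,5$) explicitly in terms of the incoming ones, which is precisely the well-posedness rule verified for Theorem \ref{th:existence}. I then solve \eqref{eq:syst_physical} forward-in-time on beams $4,5$ from this interface data as a mixed initial-boundary problem, and finally read off the controls $q_4,q_5$ by evaluating the Neumann traces $\tau_{i^n}^n z_{i^n}(\mathbf{x}_{i^n}^n,\cdot)$ at the simple nodes, as in \eqref{eq:IGEB_condNSz}. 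A gluing argument ensures the pieces agree on overlaps (matching the forward solution below the sidewise region with the constructed solution above it along the characteristics), yielding a single $C^1_{x,t}$ solution; the norm bounds $\|q_i\|_{C^1_t}\le\varepsilon$ and $\|y_i\|_{C^1_x}\le\varepsilon$ follow from the quantitative dependence in the semi-global and sidewise well-posedness estimates, with $\gamma,\delta$ chosen small enough. I expect the main obstacle to be the careful bookkeeping of the characteristic regions and the compatibility of the forward and sidewise solutions along the cycle edge $3$ at the two multiple nodes, ensuring that the domain of determinacy of the sidewise problem covers $[T^*,T]$ at the far ends of beams $1,2$ exactly when $T-T^*>\overline{T}$, and that the resulting solution is genuinely $C^1$ (not merely continuous) across the gluing interfaces — this is where the sharp choice of $\overline{T}$ in \eqref{eq:minT} and the $C^2$ regularity from Assumption \ref{as:mass_flex} enter decisively.
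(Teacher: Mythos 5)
Your overall architecture (forward solve with provisional controls, sidewise solve from the charged node, reading off the controls at the simple nodes) matches the paper's, but two steps as you describe them would fail. First, you propose to take the profiles $\overline{y}_1,\overline{y}_2$ as Cauchy data on $\{0\}\times[T^*,T]$ only, solve rightward, and then \emph{glue} the result to the forward solution ``below'' it along characteristics. There is no reason these two pieces agree on any overlap: the profiles are arbitrary and unrelated to the forward solution, so the glued object is generically not even continuous. The missing idea is the paper's interpolation step: one first builds a single boundary datum $\overline{\overline{y}}_i\in C^1([0,T];\mathbb{R}^{12})$ at $x=0$ of beams $1,2$ that equals the trace of the forward solution $y_i^f(0,\cdot)$ on $[0,\overline{T}]$ and equals $\overline{y}_i$ on $[T^*,T]$ (e.g.\ by Hermite interpolation on the gap $(\overline{T},T^*)$ — this is precisely why $T^*>\overline{T}$ strictly), enforces the node-$1$ transmission conditions by defining $\overline{\overline{y}}_2$ from $\overline{\overline{y}}_1$, and then solves \emph{one} sidewise problem on the whole rectangle $[0,\ell_i]\times[0,T]$, with $v_i(\cdot,0)=v_i^0$ at $t=0$ and an artificial condition on $z_i$ at $t=T$ (conditions you omit, but which are needed to pose the sidewise problem on the full rectangle). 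Agreement with $y_i^f$ — and hence recovery of the initial condition \eqref{eq:IGEB_ini_cond} and of the traces at $\ell_i$ on $[0,\max\{T_4,T_5\}]$ — then follows from \emph{uniqueness} of the sidewise solution on the determinacy domain $\mathcal{R}(i,\mathbf{t}_i)$, not from gluing.

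Second, at nodes $2$ and $3$ you propose to use ``the already-known traces of $y_3$ coming from the forward solution.'' This cannot work: once the new $y_1,y_2$ deviate from $y_1^f,y_2^f$ (which they must for $t$ beyond the agreement window, since the profiles are arbitrary), keeping $y_3=y_3^f$ violates the continuity condition \eqref{eq:IGEB_cont_velo} between beams $1,3$ at node $2$ and beams $2,3$ at node $3$. The paper instead \emph{re-solves} beam $3$ as a forward problem on $[0,T]$ with Dirichlet velocity boundary conditions at both ends manufactured from the new traces $v_1(\ell_1,\cdot)$, $v_2(\ell_2,\cdot)$ via \eqref{eq:pb3full_BC_0_l3}, and only then determines the data for beams $4,5$ from the Kirchhoff condition at nodes $2,3$. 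With these two corrections (interpolated boundary data plus uniqueness on determinacy domains in place of gluing; re-solving beam $3$), your argument aligns with the paper's; your reading of the role of $\overline{T}=\max\{T_1,T_2\}+\max\{T_4,T_5\}$ and of the invertibility of $A_i$ for the sidewise problems is correct.
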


As mentionned in Section \ref{sec:intro}, the proof of Theorem \ref{th:controllability}, given in Section \ref{sec:controllability}, relies upon the existence and uniqueness theory of \emph{semi-global} classical solutions to the network problem (here, Theorem \ref{th:existence}), the form of the transmission condition of the network, and on a \emph{constructive method}. The idea of the proof is to construct a solution $(y_i)_{i\in \mathcal{I}}$ to \eqref{eq:syst_physical}, such that it satisfies the initial condition, the nodal conditions, and the given nodal profiles. Substituting this solution into the nodal conditions at the nodes $n \in \{4, 5\}$, one then obtains the desired controls $q_4, q_5$. \textcolor{black}{Our proof follows the lines of \cite{Zhuang2018}, where the authors develop a methodology for proving the nodal profile controllability for A-shaped networks of canals governed by the Saint-Venant equations.}

\begin{remark} \label{rem:controllability_thm}
A few remarks are in order.
\begin{enumerate}

\item \label{subrem:global}
{\color{black}
The smallness of the initial and nodal data and of the nodal profiles in (i) and (ii) is used to ensure the well-posedness of the mixed initial-boundary value problem for beams described by the IGEB model. This limitation leads to the local nature of the controllability result: in a sufficiently small $C^1$-neighborhood of the zero-steady state, we can construct continuously differentiable controls, which then generate a piecewise continuously differentiable solution on the whole network.  
Furthermore, the study of other equilibrium solutions for the IGEB network is relevant to achieve further objectives. For instance, in the spirit of \cite{gugatLeugering2003}, supposing that the set of equilibria is connected, one might look to use the result of local exact controllability of nodal profiles as a basis to then prove more global results.
}

\item {\color{black}
For the system linearized around the zero-steady state, a global nodal profile controllability result is also achieved, though without any limitation on the size of the of the data and nodal profiles, as it rests on an existence and uniqueness result that does not impose such limitations. Moreover, the `optimal' estimate for the controllability time $T^*$ remains that of in Theorem \ref{th:controllability}, given in terms of the transmission times \eqref{eq:def_Lambdai_Ti}.
}

\item 
The \emph{controllability time} $T^*$ from which one can prescribe nodal profiles, has to be large enough, depending on the lengths of the beams and the eigenvalues of $(A_i)_{i \in \mathcal{I}}$ (and thus, it depends on the geometrical and material properties of the beam). As we will see in Section \ref{sec:controllability}, $\overline{T}$ is the transmission time from the controlled nodes to the charged node. One may note that $T_i \leq \frac{\ell_i}{|\lambda_i^*|}$, where the constant $\lambda_i^*<0$ denotes the maximum over $x$ of the largest \emph{negative} eigenvalue of $A_i(x)$.

\item 
One will observe in the proof of Theorem \ref{th:controllability} that the controls $q_4, q_5$ are not unique due to the use of interpolation and arbitrary nodal conditions throughout the proof. 

\item \label{subrem:sidewise}
In the proof of Theorem \ref{th:controllability}, to construct the solution $(y_i)_{i\in\mathcal{I}}$, one is led to solve a series of forward and sidewise problems for \eqref{eq:IGEB_gov} for the different beams $i \in \mathcal{I}$ of the network. Solving a sidewise problem  for \eqref{eq:IGEB_gov} entails changing the role of $x$ and $t$, considering a governing system of the form 
\begin{linenomath}
\begin{align*}
\partial_x y_i + A_i^{-1}\partial_t y_i + A_i^{-1}\overline{B}_i y_i = A_i^{-1}\overline{g}_i(\cdot, y_i)
\end{align*}
\end{linenomath}
and providing ``boundary conditions'' at $t=0$ and $t = T$, and ``initial conditions'' at $x=0$ (rightward problem) or $x = \ell_i$ (leftward problem). It is consequently important here that $A_i$ does not have any zero eigenvalue.

\end{enumerate}
\end{remark}

\subsubsection{Study of the GEB model}

In order to translate Theorems \ref{th:existence} and \ref{th:controllability} in terms of the GEB model, we prove the Theorem \ref{thm:solGEB} below, which yields the existence of a unique classical solution to \eqref{eq:GEB_netw}, provided that a unique classical solution exists for \eqref{eq:syst_physical} and that the data of both models fulfill some compatibility conditions. 

Let us first introduce the compatibility conditions on the initial and boundary data of the GEB network \eqref{eq:GEB_netw}, that will be of use in the theorem and corollaries that follow
\begin{linenomath}
\begin{subequations}\label{eq:compat_GEB_-1_GEBtransmi}
\begin{align} \label{eq:compat_GEB_-1}
&(f_n^\mathbf{p}, f_n^\mathbf{R})(0) = (\mathbf{p}_{i^n}^0, \mathbf{R}_{i^n}^0)(\mathbf{x}_{i^n}^n), \quad n\in \mathcal{N}_S^D,\\
\label{eq:compat_GEB_transmi}
&\mathbf{p}_i^0(\mathbf{x}_i^n)  = \mathbf{p}_{i^n}^0(\mathbf{x}_{i^n}^n), \quad (\mathbf{R}_i^0 R_i^\intercal)(\mathbf{x}_i^n)  = (\mathbf{R}_{i^n}^0 R_{i^n}^\intercal)(\mathbf{x}_{i^n}^n), \quad i\in \mathcal{I}^n, \, n \in \mathcal{N}_M,
\end{align}
\end{subequations}
\end{linenomath}
and
\begingroup 
\setlength\arraycolsep{3pt}
\renewcommand*{\arraystretch}{0.9}
\begin{linenomath}
\begin{subequations}\label{eq:compat_GEB_01}
\begin{align}
\label{eq:compat_GEB_01_1}
&\mathbf{p}_i^1(\mathbf{x}_i^n) = \mathbf{p}_{i^n}^1(\mathbf{x}_{i^n}^n), \quad w_i^0(\mathbf{x}_i^n) = w_{i^n}^0(\mathbf{x}_{i^n}^n), \quad i \in \mathcal{I}^n, \, n \in \mathcal{N}_M\\
\label{eq:compat_GEB_01_2}
&\sum_{i \in \mathcal{I}^n} \tau_i^n \left(\overline{R}_i \mathbf{C}_i^{-1} \begin{bmatrix}
(\mathbf{R}_i^0)^{\intercal} \frac{\mathrm{d}}{\mathrm{d}x} \mathbf{p}_i^0 - e_1\\
\mathrm{vec}\left( (\mathbf{R}_i^0)^{\intercal} \frac{\mathrm{d}}{\mathrm{d}x}\mathbf{R}_i^0 - R_i^{\intercal}\frac{\mathrm{d}}{\mathrm{d}x} R_i \right)
\end{bmatrix}\right)(\mathbf{x}_i^n) = q_n(0), \quad n \in \mathcal{N}_M,\\
\label{eq:compat_GEB_01_3}
&\tau_{i^n}^n \left(\overline{R}_{i^n} \mathbf{C}_{i^n}^{-1} \begin{bmatrix}
(\mathbf{R}_{i^n}^0)^{\intercal} \frac{\mathrm{d}}{\mathrm{d}x} \mathbf{p}_{i^n}^0 - e_1\\
\mathrm{vec}\left( (\mathbf{R}_{i^n}^0)^{\intercal} \frac{\mathrm{d}}{\mathrm{d}x}\mathbf{R}_{i^n}^0 - R_{i^n}^{\intercal}\frac{\mathrm{d}}{\mathrm{d}x} R_{i^n} \right)
\end{bmatrix} \right)(\mathbf{x}_{i^n}^n) = q_n(0), \quad n \in \mathcal{N}_S^N,\\
\label{eq:compat_GEB_01_4}
&\mathbf{p}_{i^n}^1(\mathbf{x}_{i^n}^n) = \frac{\mathrm{d}}{\mathrm{d}t}f_n^\mathbf{p}(0), \quad w_{i^n}^0(\mathbf{x}_{i^n}^n) = \frac{\mathrm{d}}{\mathrm{d}t}f_n^\mathbf{R}(0), \quad n \in \mathcal{N}_S^D.
\end{align}
\end{subequations}
\end{linenomath}
\endgroup

\begin{theorem} \label{thm:solGEB}
Consider a general network, and assume that:
\begin{enumerate}[label=(\roman*)]
\item \label{thm:solGEB_c1} the beam parameters $(\mathbf{M}_i, \mathbf{C}_i, R_i)$ and initial data $(\mathbf{p}_i^0, \mathbf{R}_i^0, \mathbf{p}_i^1, w_i^0)$ have the regularity \eqref{eq:reg_beampara} and \eqref{eq:reg_Idata_GEB}, and $y_i^0$ is the associated function defined by \eqref{eq:rel_inidata},

\item \label{thm:solGEB_c2} the Neumann data $f_n = f_n(t, \mathbf{R}_{i^n})$ are of the form \eqref{eq:def_fn}, for given functions $q_n$ of regularity \eqref{eq:reg_Ndata_IGEB},

\item \label{thm:solGEB_c3} the Dirichlet data $(f_n^\mathbf{p}, f_n^\mathbf{R})$  are of regularity \eqref{eq:reg_Ndata_GEB_D}, and $q_n$ are the associated functions defined by \eqref{eq:def_qn_D},

\item \label{thm:solGEB_c4} the compatibility conditions \eqref{eq:compat_GEB_-1_GEBtransmi} hold.
\end{enumerate}
Then, if there exists a unique solution $(y_i)_{i \in \mathcal{I}} \in \prod_{i=1}^N C^1([0, \ell_i]\times [0, T]; \mathbb{R}^{12})$ to \eqref{eq:syst_physical} with initial and nodal data $y_i^0$ and $q_n$ (for some $T>0$), there exists a unique solution
$(\mathbf{p}_i, \mathbf{R}_i)_{i \in \mathcal{I}} \in  \prod_{i=1}^N C^2([0, \ell_i]\times [0, T]; \mathbb{R}^3 \times \mathrm{SO}(3))$
to \eqref{eq:GEB_netw} with initial data $(\mathbf{p}_i^0, \mathbf{R}_i^0, \mathbf{p}_i^1, w_i^0)$ and nodal data $f_n$, $(f_n^\mathbf{p}, f_n^\mathbf{R})$, and $(y_i)_{i\in \mathcal{I}} = \mathcal{T}((\mathbf{p}_i, \mathbf{R}_i)_{i\in\mathcal{I}})$.
\end{theorem}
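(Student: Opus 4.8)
The plan is to invert the transformation $\mathcal{T}$ of \eqref{eq:transfo} beam by beam, and then to promote the single-beam inversion to the whole network by checking that the IGEB nodal conditions of \eqref{eq:syst_physical}, together with the compatibility conditions \eqref{eq:compat_GEB_-1_GEBtransmi}, force the GEB nodal conditions of \eqref{eq:GEB_netw}. Write the given $C^1$ solution as $y_i = (V_i^\intercal, W_i^\intercal, \Phi_i^\intercal, \Psi_i^\intercal)^\intercal$ and set $(\Gamma_i^\intercal, \Upsilon_i^\intercal)^\intercal = \mathbf{C}_i z_i$, so that the defining relations \eqref{eq:single_beam_VWPhiPsi}--\eqref{eq:def_smallphipsii} become the four first-order identities $\partial_t \mathbf{p}_i = \mathbf{R}_i V_i$, $\partial_t \mathbf{R}_i = \mathbf{R}_i \widehat{W_i}$, $\partial_x \mathbf{p}_i = \mathbf{R}_i(\Gamma_i + e_1)$ and $\mathbf{R}_i^\intercal \partial_x \mathbf{R}_i = \widehat{\Upsilon_i + \Upsilon_c^i}$. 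These constitute an overdetermined (Pfaffian) system for the pair $(\mathbf{p}_i, \mathbf{R}_i)$, and the whole proof rests on the observation that the \emph{strain} block of \eqref{eq:IGEB_gov} is exactly the integrability condition for this system.

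First I would reconstruct each beam. For fixed $i$, prescribe $\mathbf{R}_i(\cdot, 0) = \mathbf{R}_i^0$ and integrate the ODE $\partial_t \mathbf{R}_i = \mathbf{R}_i \widehat{W_i}$ forward in $t$ for every $x$; since $\widehat{W_i}$ is skew-symmetric, $\mathbf{R}_i$ remains $\mathrm{SO}(3)$-valued. Then set $\mathbf{p}_i(\cdot, 0) = \mathbf{p}_i^0$ and integrate $\partial_t \mathbf{p}_i = \mathbf{R}_i V_i$. It remains to verify the two \emph{spatial} identities for all $t$, not merely at $t=0$. Writing $E_i := \partial_x \mathbf{R}_i - \mathbf{R}_i \widehat{\Upsilon_i + \Upsilon_c^i}$, one has $E_i(\cdot, 0) = 0$ because $y_i^0$ is defined from $(\mathbf{p}_i^0, \mathbf{R}_i^0)$ via \eqref{eq:rel_inidata}; differentiating in $t$, using $\partial_t \partial_x \mathbf{R}_i = \partial_x(\mathbf{R}_i \widehat{W_i})$ and the identity $[\widehat{a}, \widehat{b}] = \widehat{a \times b}$, yields $\partial_t E_i = E_i \widehat{W_i} + \mathbf{R}_i\bigl(\partial_x \widehat{W_i} - \partial_t \widehat{\Upsilon_i} + \widehat{(\Upsilon_i + \Upsilon_c^i) \times W_i}\bigr)$. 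The bracketed term vanishes precisely by the angular part of the strain block of \eqref{eq:IGEB_gov}, namely $\partial_t \Upsilon_i = \partial_x W_i + (\Upsilon_i + \Upsilon_c^i) \times W_i$, so $E_i$ solves a homogeneous linear ODE in $t$ with zero initial datum, whence $E_i \equiv 0$. The analogous quantity $E_i^{\mathbf{p}} := \partial_x \mathbf{p}_i - \mathbf{R}_i(\Gamma_i + e_1)$ is handled identically using the linear-force part of the same block. A short bootstrap then gives the regularity: the derivatives of $(\mathbf{p}_i, \mathbf{R}_i)$ are products of $C^1$ quantities, so $(\mathbf{p}_i, \mathbf{R}_i) \in C^2$.

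With the four Pfaffian relations in hand, the GEB governing equation \eqref{eq:GEB_gov} follows by reversing the derivation of \eqref{eq:IGEB_gov}: left-multiplying the velocity block of \eqref{eq:IGEB_gov} by $\mathrm{diag}(\mathbf{R}_i, \mathbf{R}_i)$ and inserting $\phi_i = \mathbf{R}_i \Phi_i$, $\psi_i = \mathbf{R}_i \Psi_i$ and $\partial_x \mathbf{p}_i = \mathbf{R}_i(\Gamma_i + e_1)$ reproduces \eqref{eq:GEB_gov} term by term, while \eqref{eq:GEB_IC_0ord}--\eqref{eq:GEB_IC_1ord} hold by construction and by \eqref{eq:rel_inidata}. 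For the network conditions, at a multiple node $n$ set $Q_n^{(i)} = (\mathbf{R}_i R_i^\intercal)(\mathbf{x}_i^n, \cdot)$; from $\partial_t \mathbf{R}_i = \mathbf{R}_i \widehat{W_i}$ and $R_i \widehat{W_i} R_i^\intercal = \widehat{R_i W_i}$ one gets $\partial_t Q_n^{(i)} = Q_n^{(i)} \widehat{R_i W_i}$. Continuity \eqref{eq:IGEB_cont_velo} makes $R_i W_i$ node-wise independent of $i$ and \eqref{eq:compat_GEB_transmi} makes $Q_n^{(i)}(0)$ independent of $i$, so ODE uniqueness gives the rigid-joint condition \eqref{eq:GEB_rigid_angles}; knowing the common value $Q_n$, the linear-velocity component of \eqref{eq:IGEB_cont_velo} reduces to $\partial_t \mathbf{p}_i = \partial_t \mathbf{p}_{i^n}$ at the node, which integrates via \eqref{eq:compat_GEB_transmi} to \eqref{eq:GEB_continuity_pi}, and \eqref{eq:IGEB_Kirchhoff}, after left-multiplication by $\mathrm{diag}(Q_n, Q_n)$ and use of \eqref{eq:def_fn}, becomes the Kirchhoff condition \eqref{eq:GEB_Kirchhoff}. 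The simple nodes are treated the same way, the Dirichlet case \eqref{eq:GEB_condNSv_p_R} following because $\mathbf{R}_{i^n}(f_n^{\mathbf{R}})^\intercal$ at the node has vanishing $t$-derivative by \eqref{eq:def_qn_D} and equals $\mathbf{I}_3$ at $t=0$ by \eqref{eq:compat_GEB_-1}. Uniqueness is then immediate: any GEB solution sent by $\mathcal{T}$ to the given $y_i$ satisfies the same Pfaffian system with the same initial data, so it coincides with the reconstructed pair.

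I expect the integrability verification in the second step to be the main obstacle, since it requires the careful $\widehat{\cdot}$-algebra that identifies the strain block of \eqref{eq:IGEB_gov}, \emph{including its quadratic source} $\overline{g}_i$, with the zero-curvature (Frobenius) conditions of the Pfaffian system. The network bookkeeping in the third step is delicate because the IGEB transmission conditions encode only the time-differentiated, body-frame-rotated versions of the GEB conditions; however, it is resolved systematically by the ODE-uniqueness arguments seeded by the compatibility conditions \eqref{eq:compat_GEB_-1_GEBtransmi}.
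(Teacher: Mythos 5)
Your proposal is correct, and it reaches the same endpoint as the paper but by a genuinely different route at the central step, which is the inversion of $\mathcal{T}$ on a single beam. The paper isolates this step as Lemma \ref{lem:invert_transfo}: it poses the full overdetermined (Pfaffian) system \eqref{eq:overdetR}--\eqref{eq:overdetp}, with \emph{both} the $t$- and $x$-derivatives of $(\mathbf{p}_i,\mathbf{R}_i)$ prescribed and data given only at the single point $(\mathbf{x}_i^n,0)$, and establishes well-posedness by passing to a quaternion parametrization of $\mathrm{SO}(3)$ and invoking results on overdetermined first-order linear systems from \cite{RL2019}, with the conditions \eqref{eq:compat_last6eq}--\eqref{eq:compat_ini}--\eqref{eq:compat_nod} serving to define the space $E_2$ on which $\mathcal{T}$ is a bijection. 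You instead integrate only the temporal equations $\partial_t\mathbf{R}_i=\mathbf{R}_i\widehat{W}_i$, $\partial_t\mathbf{p}_i=\mathbf{R}_iV_i$ with Cauchy data on the whole slice $\{t=0\}$, and then propagate the spatial constraints by showing that the defects $E_i$ and $E_i^{\mathbf{p}}$ satisfy homogeneous linear ODEs in $t$ with zero initial datum, the strain block of \eqref{eq:IGEB_gov} playing the role of the Frobenius/zero-curvature condition. This is more elementary and self-contained (no quaternions, no external well-posedness result, and \eqref{eq:GEB_IC_0ord} holds by construction rather than being recovered from \eqref{eq:compat_ini}), at the price of having to justify the commutation $\partial_t\partial_x\mathbf{R}_i=\partial_x\partial_t\mathbf{R}_i$ via $C^1$-dependence of the ODE flow on the parameter $x$, and of carrying out the $\widehat{\cdot\,}$-algebra explicitly; the paper's lemma, in exchange, packages the inversion as a clean bijection $E_1\to E_2$ that it then reuses verbatim for the uniqueness assertion. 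Your subsequent recovery of the governing equation, the rigid-joint condition, the continuity of the displacement, the Kirchhoff condition and the simple-node conditions coincides, step for step, with the paper's Steps 2--7 (ODE-uniqueness arguments seeded by \eqref{eq:compat_GEB_-1_GEBtransmi}). The only point I would tighten is the final uniqueness claim: ``any GEB solution sent by $\mathcal{T}$ to the given $y_i$'' presupposes that every $C^2$ solution of \eqref{eq:GEB_netw} is indeed mapped by $\mathcal{T}$ to a $C^1$ solution of \eqref{eq:syst_physical} (the forward direction of the correspondence), so that the uniqueness of the latter applies; this is the standard derivation of IGEB from GEB and is exactly what the paper's appeal to the bijectivity of $\mathcal{T}$ encodes, but it deserves a sentence.
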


\begin{remark}
We have the following restriction on the form of the Neumann data $f_n$: it must be possible to express it as a function $q_n = q_n(t)$ in the body-attached basis (see Subsection \ref{subsec:GEBmodels}).
\end{remark}

The proof of Theorem \ref{thm:solGEB}, given in Section \ref{sec:invert_transfo}, consists in using the last six equations of \eqref{eq:IGEB_gov} as compatibility conditions to prove that the transformation $\mathcal{T}$, defined in \eqref{eq:transfo}, is bijective on some spaces (see Lemma \ref{lem:invert_transfo}); this relies on the use of quaternions \cite{chou1992} to parametrize the rotations matrices, and existence and uniqueness results for (seemingly overdetermined) first-order linear PDE systems. Once that this property of the transformation is established, one recovers notably the governing system \eqref{eq:GEB_gov} by using the first six equations of \eqref{eq:IGEB_gov}. The transmission conditions are recovered by first showing that the rigid joint assumption \eqref{eq:GEB_rigid_angles} is fulfilled and then deducing \eqref{eq:GEB_continuity_pi}-\eqref{eq:GEB_Kirchhoff} from \eqref{eq:IGEB_cont_velo}-\eqref{eq:IGEB_Kirchhoff}.

Corollary \ref{coro:wellposedGEB} below follows from Theorem \ref{th:existence} and Theorem \ref{thm:solGEB}.

\begin{corollary}
\label{coro:wellposedGEB}
Consider a general network and suppose that the conditions \ref{thm:solGEB_c1}-\ref{thm:solGEB_c2}-\ref{thm:solGEB_c3}-\ref{thm:solGEB_c4} of Theorem \ref{thm:solGEB} are fulfilled, suppose that the beam parameters $(\mathbf{M}_i, \mathbf{C}_i)$ satisfy Assumption \ref{as:mass_flex}, and that the compatibility conditions \eqref{eq:compat_GEB_01} hold.
Then, for any $T>0$, there exists $\varepsilon_0>0$ such that for all $\varepsilon \in (0, \varepsilon_0)$, and for some $\delta>0$, if moreover $\|y_i^0\|_{C_x^1}+ \|q_n\|_{C_t^1}\leq \delta$, then there exists a unique solution $(\mathbf{p}_i, \mathbf{R}_i)_{i \in \mathcal{I}} \in \prod_{i=1}^N C^2([0, \ell_i]\times[0, T]; \mathbb{R}^3 \times \mathrm{SO}(3))$ to \eqref{eq:GEB_netw} with initial data $(\mathbf{p}_i^0, \mathbf{R}_i^0, \mathbf{p}_i^1, w_i^0)$ and nodal data $f_n$, $(f_n^\mathbf{p}, f_n^\mathbf{R})$.
\end{corollary}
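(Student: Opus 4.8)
The plan is to obtain the corollary by chaining the two main theorems: apply Theorem \ref{th:existence} to the intrinsic data induced by the GEB data to produce a unique $C^1_{x,t}$ solution of the IGEB network \eqref{eq:syst_physical}, and then invoke Theorem \ref{thm:solGEB} to lift it to a unique $C^2_{x,t}$ solution of \eqref{eq:GEB_netw}. Since the hypotheses \ref{thm:solGEB_c1}--\ref{thm:solGEB_c4} of Theorem \ref{thm:solGEB} are assumed outright in the corollary, the genuine content is to check that the associated IGEB data $(y_i^0, q_n)$ — built from the GEB data through \eqref{eq:rel_inidata}, \eqref{eq:def_qn_D} and \eqref{eq:def_fn} — meet the three requirements of Theorem \ref{th:existence}: the regularity \eqref{eq:reg_Idata_IGEB}--\eqref{eq:reg_Ndata_IGEB}, the smallness bound, and the first-order compatibility conditions of \eqref{eq:syst_physical}.

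First I would dispatch the routine checks. The regularity $y_i^0 \in C^1([0,\ell_i];\mathbb{R}^{12})$ is read off from \eqref{eq:rel_inidata}: $v_i^0$ is assembled from $\mathbf{R}_i^0 \in C^2$ and $\mathbf{p}_i^1, w_i^0 \in C^1$, while $z_i^0$ involves $\tfrac{\mathrm{d}}{\mathrm{d}x}\mathbf{p}_i^0, \tfrac{\mathrm{d}}{\mathrm{d}x}\mathbf{R}_i^0 \in C^1$ together with $\mathbf{C}_i^{-1} \in C^2$ (Assumption \ref{as:mass_flex}) and $\tfrac{\mathrm{d}}{\mathrm{d}x}R_i \in C^1$. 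For the nodal data, $q_n \in C^1$ holds by hypothesis on $\mathcal{N}_M \cup \mathcal{N}_S^N$ and, on $\mathcal{N}_S^D$, follows from \eqref{eq:def_qn_D} using $(f_n^\mathbf{p}, f_n^\mathbf{R}) \in C^2$. Assumption \ref{as:mass_flex} and the smallness $\|y_i^0\|_{C_x^1} + \|q_n\|_{C_t^1} \leq \delta$ are exactly the standing hypotheses of the corollary, and condition \ref{thm:solGEB_c1} supplies the regularity \eqref{eq:reg_beampara} of $R_i$.

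The crux is to show that the first-order compatibility conditions of \eqref{eq:syst_physical} follow from \eqref{eq:compat_GEB_-1_GEBtransmi} and \eqref{eq:compat_GEB_01}. For the zeroth-order part I would substitute \eqref{eq:rel_inidata} into the four lines of \eqref{eq:compat_0}: the strain conditions at multiple and Neumann-simple nodes become literally \eqref{eq:compat_GEB_01_2}--\eqref{eq:compat_GEB_01_3}; the velocity-continuity condition reduces, upon transposing the rigid-joint identity $(\mathbf{R}_i^0 R_i^\intercal)(\mathbf{x}_i^n) = (\mathbf{R}_{i^n}^0 R_{i^n}^\intercal)(\mathbf{x}_{i^n}^n)$ of \eqref{eq:compat_GEB_transmi} to $R_i(\mathbf{R}_i^0)^\intercal = R_{i^n}(\mathbf{R}_{i^n}^0)^\intercal$, to the velocity matching \eqref{eq:compat_GEB_01_1}; and the Dirichlet condition reduces, via \eqref{eq:compat_GEB_-1} (giving $f_n^\mathbf{R}(0) = \mathbf{R}_{i^n}^0(\mathbf{x}_{i^n}^n)$) and \eqref{eq:def_qn_D}, to \eqref{eq:compat_GEB_01_4}. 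For the first-order part I would read $y_i^1 = -A_i \tfrac{\mathrm{d} y_i^0}{\mathrm{d}x} - \overline{B}_i y_i^0 + \overline{g}_i(\cdot, y_i^0)$ as $\partial_t y_i|_{t=0}$ and verify that \eqref{eq:compat_0}, with $(v_i^1, z_i^1)$ and $\tfrac{\mathrm{d}}{\mathrm{d}t}q_n(0)$ replacing $(v_i^0, z_i^0)$ and $q_n(0)$, is precisely the $t$-differentiated nodal conditions evaluated at $t=0$; here the block form of $A_i$ converts the conditions on $y_i^1$ into constraints on $\tfrac{\mathrm{d}}{\mathrm{d}x}v_i^0$ and $\tfrac{\mathrm{d}}{\mathrm{d}x}z_i^0$ at the nodes, and $\overline{B}_i, \overline{g}_i$ contribute lower-order algebraic terms in $y_i^0$.

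I expect this first-order compatibility step to be the main obstacle, as it simultaneously entangles the nonlinear transformation $\mathcal{T}$, the spatial derivatives of the data, and the indefinite block structure of $A_i$ and $\overline{B}_i$, all of which must be reconciled with the rigid-joint relation and the Kirchhoff balance at each multiple node at once. Once compatibility is secured, Theorem \ref{th:existence} furnishes, for any $T>0$ and the inherited $\varepsilon_0$-$\varepsilon$-$\delta$ quantifiers, a unique $(y_i)_{i\in\mathcal{I}} \in \prod_{i=1}^N C^1([0,\ell_i]\times[0,T];\mathbb{R}^{12})$ with $\|y_i\|_{C_{x,t}^1} \leq \varepsilon$; since hypotheses \ref{thm:solGEB_c1}--\ref{thm:solGEB_c4} hold, Theorem \ref{thm:solGEB} then produces the unique $(\mathbf{p}_i, \mathbf{R}_i)_{i\in\mathcal{I}} \in \prod_{i=1}^N C^2([0,\ell_i]\times[0,T];\mathbb{R}^3\times\mathrm{SO}(3))$ solving \eqref{eq:GEB_netw} with the prescribed data and $(y_i)_{i\in\mathcal{I}} = \mathcal{T}((\mathbf{p}_i,\mathbf{R}_i)_{i\in\mathcal{I}})$, which completes the proof.
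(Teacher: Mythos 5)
Your proposal is correct and coincides with the paper's (essentially unwritten) proof: the corollary is obtained exactly by chaining Theorem \ref{th:existence}, applied to the IGEB data $(y_i^0, q_n)$ induced via \eqref{eq:rel_inidata}, \eqref{eq:def_qn_D} and \eqref{eq:def_fn}, with Theorem \ref{thm:solGEB}. The compatibility verification you single out as the main obstacle is precisely what the paper dispatches in the remark following the corollary, which asserts that, under \eqref{eq:compat_GEB_-1_GEBtransmi}, the conditions \eqref{eq:compat_GEB_01} are an equivalent reformulation of the first-order compatibility conditions of \eqref{eq:syst_physical} in terms of the GEB data.
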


\begin{remark}
Under \eqref{eq:compat_GEB_-1_GEBtransmi}, the conditions \eqref{eq:compat_GEB_01} are just an equivalent way of imposing that $y_i^0$ fulfills the first-order compatibility conditions of \eqref{eq:syst_physical}, but expressed in terms of the data of the GEB model.
\end{remark}

Finally, from Theorems \ref{th:controllability} and \ref{thm:solGEB}, one obtains Corollary \ref{coro:controlGEB} below.

\begin{corollary} \label{coro:controlGEB}
Consider the A-shaped network defined by \eqref{eq:A_netw}, and assume that

\begin{enumerate}[label=(\roman*)]
\item the beam parameters $(\mathbf{M}_i, \mathbf{C}_i, R_i)$ and initial data $(\mathbf{p}_i^0, \mathbf{R}_i^0, \mathbf{p}_i^1, w_i^0)$ have the regularity \eqref{eq:reg_beampara} and \eqref{eq:reg_Idata_GEB}, the former satisfy Assumption \ref{as:mass_flex} and the latter fulfill \eqref{eq:compat_GEB_transmi}, and $y_i^0$ is the associated function defined by \eqref{eq:rel_inidata},

\item the Neumann data $f_n = f_n(t, \mathbf{R}_{i^n})$, for $n \in \{1, 2, 3\}$ are of the form \eqref{eq:def_fn}, for given functions $q_n$ of regularity \eqref{eq:reg_Ndata_IGEB},

\item the compatibility conditions \eqref{eq:compat_GEB_01_1}-\eqref{eq:compat_GEB_01_2} for all $n \in \{1, 2, 3\}$ hold.
\end{enumerate}
Let $\overline{T}>0$ be defined by \eqref{eq:minT}. Then, for any $T>T^*>\overline{T}$, there exists $\varepsilon_0>0$ such that for all $\varepsilon\in (0, \varepsilon_0)$, for some $\delta, \gamma>0$, and for any nodal profiles $\overline{y}_1, \overline{y}_2 \in C^1([T^*, T]; \mathbb{R}^{12})$ satisfying $\|\overline{y}_i\|_{C_t^1}\leq \gamma$ and the  transmission conditions \eqref{eq:IGEB_cont_velo}-\eqref{eq:IGEB_Kirchhoff} at the node $n=1$, if additionally $\|y_i^0\|_{C_x^1} + \|f_n\|_{C_t^1} \leq \delta$ ($i \in \mathcal{I}, \ n \in \{1, 2, 3\}$), then there exist controls $f_4, f_5 \in C^1([0, T]; \mathbb{R}^6)$ with $\|f_n\|_{C_t^1}\leq \varepsilon$ such that System \eqref{eq:GEB_netw} with initial data $(\mathbf{p}_i^0, \mathbf{R}_i^0, \mathbf{p}_i^1, w_i^0)$ and boundary data $(f_n)_{n \in \{1, 2, 3\}}$, admits a unique solution $(\mathbf{p}_i, \mathbf{R}_i)_{i \in \mathcal{I}} \in \prod_{i=1}^N C^2([0, \ell_i]\times[0, T]; \mathbb{R}^3 \times \mathrm{SO}(3))$, and $(y_i)_{i\in\mathcal{I}} := \mathcal{T}((\mathbf{p}_i, \mathbf{R}_i)_{i\in\mathcal{I}})$ fulfills $\|y_i\|_{C_{x,t}^1}\leq \varepsilon$ and the nodal profiles \eqref{eq:aim}.
\end{corollary}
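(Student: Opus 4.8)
The plan is to obtain this result as a direct two-step combination of the two main theorems, applied in a definite order: first the intrinsic controllability result (Theorem \ref{th:controllability}) to produce an IGEB solution together with the intrinsic controls, and then the lifting result (Theorem \ref{thm:solGEB}) to transport this solution to the GEB model, the GEB controls being recovered \emph{a posteriori}. The order is essential: since the Neumann data at the controlled nodes $\{4,5\}$ is of the form \eqref{eq:def_fn} and thus depends, through the factor $\mathbf{R}_{i^n}$, on the solution itself, one cannot invoke Theorem \ref{thm:solGEB} before the IGEB solution --- and with it the rotation matrices at the controlled nodes --- has been constructed.

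First I would set up the data correspondence. From the GEB initial data $(\mathbf{p}_i^0, \mathbf{R}_i^0, \mathbf{p}_i^1, w_i^0)$ I define $y_i^0$ by \eqref{eq:rel_inidata}, and I take the functions $q_1, q_2, q_3$ of the corollary's hypothesis (ii) (the intrinsic counterparts of the given loads $f_1, f_2, f_3$ via \eqref{eq:def_fn}) as the IGEB nodal data at the multiple nodes. I then verify the hypotheses of Theorem \ref{th:controllability} for this IGEB data: the regularity \eqref{eq:reg_Idata_IGEB}--\eqref{eq:reg_Ndata_IGEB} and Assumption \ref{as:mass_flex} are inherited from the corollary's hypothesis (i); the first-order compatibility conditions of \eqref{eq:syst_physical} at the multiple nodes $\{1,2,3\}$ follow from \eqref{eq:compat_GEB_transmi} together with \eqref{eq:compat_GEB_01_1}--\eqref{eq:compat_GEB_01_2}, which --- as noted in the remark following Corollary \ref{coro:wellposedGEB} --- are exactly these conditions rewritten in GEB variables. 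Compatibility at the controlled nodes $\{4,5\}$ need not be pre-imposed, as it will be ensured by the constructive control. Applying Theorem \ref{th:controllability} then yields intrinsic controls $q_4, q_5 \in C^1([0,T];\mathbb{R}^6)$ with $\|q_n\|_{C_t^1}\leq \varepsilon$ and a unique small solution $(y_i)_{i\in\mathcal{I}}$ of \eqref{eq:syst_physical} satisfying the prescribed nodal profiles \eqref{eq:aim}.

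Next I would transport this solution to the GEB model. With all intrinsic nodal data $q_1,\ldots,q_5$ now in hand and the unique IGEB solution $(y_i)$ available, I invoke Theorem \ref{thm:solGEB}: its conditions \ref{thm:solGEB_c1} (regularity and $y_i^0$ from \eqref{eq:rel_inidata}) and \ref{thm:solGEB_c4} (the compatibility \eqref{eq:compat_GEB_-1_GEBtransmi}, which reduces here to \eqref{eq:compat_GEB_transmi} since $\mathcal{N}_S^D=\emptyset$) are in force, while condition \ref{thm:solGEB_c3} is vacuous; condition \ref{thm:solGEB_c2} holds by declaring the GEB controls $f_4, f_5$ to be given by \eqref{eq:def_fn} from $q_4,q_5$ and the rotation matrices $\mathbf{R}_{i^n}$ of the GEB solution to be obtained. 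Theorem \ref{thm:solGEB} then produces a unique $C^2_{x,t}$ solution $(\mathbf{p}_i,\mathbf{R}_i)_{i\in\mathcal{I}}$ of \eqref{eq:GEB_netw} with $(y_i)=\mathcal{T}((\mathbf{p}_i,\mathbf{R}_i)_{i\in\mathcal{I}})$. Since the transformed state equals $(y_i)$, it automatically inherits the nodal profiles \eqref{eq:aim}, and the controls $f_4, f_5$ are $C^1$, being products of the $C^2$ factor $\mathbf{R}_{i^n}$ with the $C^1$ functions $q_4,q_5$; this completes the construction.

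The main obstacle will be the quantitative matching of the smallness thresholds across the transformation, because the map $q_n \leftrightarrow f_n$ in \eqref{eq:def_fn} is mediated by the solution-dependent rotation $\mathbf{R}_{i^n}(\mathbf{x}_{i^n}^n,\cdot)$. At the level of $C^0_t$ norms this is harmless: the factor lies in $\mathrm{SO}(3)$, so $\|f_n\|_{C^0_t}=\|q_n\|_{C^0_t}$ exactly. The delicate part is the $C^1_t$ norm, where $\tfrac{\mathrm{d}}{\mathrm{d}t}f_n$ picks up a term proportional to $\partial_t\mathbf{R}_{i^n}$, i.e. to the angular-velocity component of the solution; I would absorb this by shrinking $\delta$ and $\gamma$ so that the resulting solution stays in the small $C^1$-ball of radius $\varepsilon$ on which $\partial_t \mathbf{R}_{i^n}$, and hence the discrepancy between $\|f_n\|_{C_t^1}$ and $\|q_n\|_{C_t^1}$, is controlled. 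In this small-data regime the thresholds $\delta,\gamma,\varepsilon$ supplied by Theorems \ref{th:controllability} and \ref{thm:solGEB} can be lowered consistently so that both the hypothesis $\|y_i^0\|_{C_x^1}+\|f_n\|_{C_t^1}\leq\delta$ and the conclusion $\|f_n\|_{C_t^1}\leq\varepsilon$ hold, closing the argument.
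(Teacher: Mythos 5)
Your proposal is correct and follows essentially the same route as the paper: the paper derives Corollary \ref{coro:controlGEB} directly by combining Theorem \ref{th:controllability} (applied first, to the IGEB data obtained via \eqref{eq:rel_inidata} and \eqref{eq:def_fn}) with Theorem \ref{thm:solGEB}, and the remark following the corollary makes exactly your point about recovering $f_4, f_5$ from $q_4, q_5$ via \eqref{eq:def_fn} and controlling $\|f_i\|_{C_t^1}$ through the smallness of $q_i$ and of the angular velocity $W_i$ together with the unitarity of $\mathbf{R}_i, R_i$.
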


\begin{remark}
In Corollary \ref{coro:controlGEB},
\begin{enumerate}
\item the profiles given at the node $n=1$ affect the intrinsic variables $\mathcal{T}_i(\mathbf{p}_i, \mathbf{R}_i)$, for $i \in \{1, 2\}$, and not directly the displacements and rotations $(\mathbf{p}_i, \mathbf{R}_i)$;

\item for $i \in \{4,5\}$ the control $f_i$ is given by \eqref{eq:def_fn} where $q_i$ is the control provided by Theorem \ref{th:controllability} for System \eqref{eq:syst_physical}. The smallness of the $C^1$ norm of $f_i$ comes from a combination of the fact that $q_i$ and $y_i$ (and thus, as can be seen in \eqref{eq:form_yi}, also the angular velocity $W_i$) have small $C^1$ norms, and that the expression of $f_i$ and $\frac{\mathrm{d}}{\mathrm{d}t}f_i$ involves only the functions $q_i, W_i$ and the unitary matrices $\mathbf{R}_i, R_i$. Indeed, $f_i = \mathrm{diag}\big(\mathbf{R}_i(\ell_i, \cdot), \mathbf{R}_i(\ell_i, \cdot)\big)  q_i$ and one may compute that
\begin{linenomath}
\begin{align*}
\frac{\mathrm{d}}{\mathrm{d}t} f_i = \mathrm{diag}\big(\mathbf{R}_i(\ell_i, \cdot), \mathbf{R}_i(\ell_i, \cdot)\big) \frac{\mathrm{d}}{\mathrm{d}t} q_i +  \mathrm{diag}\big((\mathbf{R}_i \widehat{W}_i)(\ell_i, \cdot), (\mathbf{R}_i \widehat{W}_i)(\ell_i, \cdot)\big)  q_i.
\end{align*}
\end{linenomath}
\end{enumerate}
\end{remark}

\section{Existence and uniqueness for the IGEB network}
\label{sec:exist}

We now turn to the proof of Theorem \ref{th:existence}.

\subsection{Hyperbolicity of the system}
\label{subsec:hyperbolic}

Let $T >0$, $i\in \mathcal{I}$ and $x \in [0, \ell_i]$. One may quickly verify that the matrix $A_i(x)$, defined in \eqref{eq:def_Ai}, has only real eigenvalues: six positive ones which are the square roots of the eigenvalues of $\Theta_i(x)$ (defined in Assumption \ref{as:mass_flex}), and six negative ones which are equal to the former but with a minus sign.
Furthermore, some computations yield the following lemma whose proof is given in \cite[Section 4]{R2020}.

\begin{lemma}
Suppose that Assumption \ref{as:mass_flex} is fulfilled and, for any $i \in \mathcal{I}$, let $U_i$, $D_i \in C^2([0, \ell_i]; \mathbb{R}^{6\times 6})$ be the functions introduced in Assumption \ref{as:mass_flex}.
Then, $A_i \in C^2([0, \ell_i]; \mathbb{R}^{12\times 12})$ may be diagonalized as follows. One has $A_i = L_i^{-1} \mathbf{D}_i L_i$ in $[0, \ell_i]$, where $\mathbf{D}_{i}$, $L_i \in C^2( [0, \ell_i]; \mathbb{R}^{12 \times 12})$ are defined by
\begin{linenomath}
\begin{equation} \label{eq:def_bfDi_Li}
\mathbf{D}_i = \mathrm{diag}(-D_{i}, D_{i}), \qquad L_i = \begin{bmatrix}
U_i \mathbf{C}_i^{-\sfrac{1}{2}} & D_{i}U_i \mathbf{C}_i^{\sfrac{1}{2}} \\
U_i \mathbf{C}_i^{-\sfrac{1}{2}} & - D_{i}U_i \mathbf{C}_i^{\sfrac{1}{2}}
\end{bmatrix},
\end{equation}
\end{linenomath}
and the inverse $L_i^{-1} \in C^2( [0, \ell_i]; \mathbb{R}^{12 \times 12})$ is given by
\begin{linenomath}
\begin{align} \label{eq:inverseLi}
L_i^{-1} = \frac{1}{2} \begin{bmatrix}
\mathbf{C}_i^{\sfrac{1}{2}} U_i^\intercal & \mathbf{C}_i^{\sfrac{1}{2}} U_i^\intercal \\
\mathbf{C}_i^{-\sfrac{1}{2}} U_i^\intercal D_i^{-1} & - \mathbf{C}_i^{-\sfrac{1}{2}} U_i^\intercal D_i^{-1}
\end{bmatrix}.
\end{align}
\end{linenomath}
\end{lemma}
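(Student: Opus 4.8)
The plan is to prove the lemma by direct verification, exploiting the block structure of $A_i$ together with the spectral data $(U_i, D_i)$ furnished by Assumption \ref{as:mass_flex}. Before checking any algebraic identity I would settle the regularity claim. Since $\mathbf{M}_i, \mathbf{C}_i \in C^2([0,\ell_i]; \mathcal{S}_{++}^6)$ take values in the positive definite symmetric matrices, whose eigenvalues are bounded away from zero on the compact interval $[0,\ell_i]$, the maps $M \mapsto M^{-1}$ and $M \mapsto M^{\sfrac{1}{2}}$ are smooth on a neighborhood of their range; hence $\mathbf{M}_i^{-1}, \mathbf{C}_i^{-1}, \mathbf{C}_i^{\sfrac{1}{2}}, \mathbf{C}_i^{-\sfrac{1}{2}} \in C^2$. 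Combined with $U_i, D_i \in C^2$ from Assumption \ref{as:mass_flex} (and $D_i^{-1} \in C^2$, $D_i$ being diagonal with positive entries bounded below), every block appearing in $A_i$, $L_i$, $\mathbf{D}_i$ and $L_i^{-1}$ is $C^2$, so in particular $A_i \in C^2([0,\ell_i]; \mathbb{R}^{12\times 12})$.

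Next I would confirm that the matrix $L_i^{-1}$ given in \eqref{eq:inverseLi} is indeed the inverse of $L_i$, by computing $L_i L_i^{-1}$ blockwise. Writing the blocks of $L_i$ as $P = U_i\mathbf{C}_i^{-\sfrac{1}{2}}$ and $Q = D_i U_i \mathbf{C}_i^{\sfrac{1}{2}}$, and those of $2L_i^{-1}$ as $R = \mathbf{C}_i^{\sfrac{1}{2}}U_i^\intercal$ and $S = \mathbf{C}_i^{-\sfrac{1}{2}}U_i^\intercal D_i^{-1}$, the product reduces to the two block identities $PR = U_i U_i^\intercal = \mathbf{I}_6$ and $QS = D_i U_i U_i^\intercal D_i^{-1} = \mathbf{I}_6$, where I used that $U_i$ is unitary and $\mathbf{C}_i^{-\sfrac{1}{2}}\mathbf{C}_i^{\sfrac{1}{2}} = \mathbf{I}_6$. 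Since $L_i L_i^{-1} = \tfrac{1}{2}\,\mathrm{diag}(PR+QS,\, PR+QS) = \mathbf{I}_{12}$, the off-diagonal blocks being $\tfrac{1}{2}(PR - QS) = \mathbf{0}_6$, the formula for $L_i^{-1}$ is verified.

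It then remains to establish the diagonalization, for which I would verify the equivalent identity $L_i A_i = \mathbf{D}_i L_i$ (or, interchangeably, $A_i L_i^{-1} = L_i^{-1}\mathbf{D}_i$). Expanding $L_i A_i$ with $A_i$ as in \eqref{eq:def_Ai} produces the blocks $Q\mathbf{C}_i^{-1}$ and $P\mathbf{M}_i^{-1}$; the first is immediate, $Q\mathbf{C}_i^{-1} = D_i U_i \mathbf{C}_i^{-\sfrac{1}{2}} = D_i P$, while the second is the crux of the argument. Here I would invoke the defining relation $\Theta_i = (\mathbf{C}_i^{\sfrac{1}{2}}\mathbf{M}_i\mathbf{C}_i^{\sfrac{1}{2}})^{-1} = \mathbf{C}_i^{-\sfrac{1}{2}}\mathbf{M}_i^{-1}\mathbf{C}_i^{-\sfrac{1}{2}}$ together with its factorization $\Theta_i = U_i^\intercal D_i^2 U_i$ to write $\mathbf{C}_i^{-\sfrac{1}{2}}\mathbf{M}_i^{-1} = U_i^\intercal D_i^2 U_i \mathbf{C}_i^{\sfrac{1}{2}}$, whence $P\mathbf{M}_i^{-1} = U_i U_i^\intercal D_i^2 U_i \mathbf{C}_i^{\sfrac{1}{2}} = D_i^2 U_i \mathbf{C}_i^{\sfrac{1}{2}} = D_i Q$. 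Substituting these two identities shows $L_i A_i = \mathbf{D}_i L_i$, and left-multiplying by $L_i^{-1}$ (now known to be a genuine inverse) yields $A_i = L_i^{-1}\mathbf{D}_i L_i$.

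The only genuinely delicate step is the identity $P\mathbf{M}_i^{-1} = D_i Q$: it is precisely the point where Assumption \ref{as:mass_flex} enters, since it is the factorization $\Theta_i = U_i^\intercal D_i^2 U_i$ (rather than any property of $\mathbf{M}_i$ or $\mathbf{C}_i$ taken individually) that makes the moving eigenbasis of $A_i$ diagonalize the velocity--strain coupling. Everything else is bookkeeping with the block structure and the orthogonality of $U_i$, and I would present those computations compactly, as they carry no conceptual difficulty.
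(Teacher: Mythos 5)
Your proof is correct: the block computations $PR=QS=\mathbf{I}_6$, $Q\mathbf{C}_i^{-1}=D_iP$ and $P\mathbf{M}_i^{-1}=U_iU_i^\intercal D_i^2U_i\mathbf{C}_i^{\sfrac{1}{2}}=D_iQ$ all check out, and the regularity argument is sound. The paper itself defers the proof to \cite[Section 4]{R2020}, describing it only as ``some computations,'' and your direct verification is exactly that computation, so there is nothing further to compare.
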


\subsection{Change of variable to Riemann invariants} 
\label{subsec:change_var}

Now, we can write \eqref{eq:syst_physical} in diagonal form by applying the change of variable 
\begin{linenomath}
\begin{align} \label{eq:change_var_Li}
r_i(x,t) = L_i(x) y_i(x,t), \qquad \text{for all }x \in [0, \ell_i], \ t \in [0, T], \ i \in \mathcal{I}.
\end{align}
\end{linenomath}
The first (resp. last) six components of $r_i$ correspond to the negative (resp. positive) eigenvalues of $A_i$, thus, for all $i \in \mathcal{I}$, we denote
\begin{linenomath}
\begin{align*}
r_i = \begin{bmatrix}
r_i^-\\
r_i^+
\end{bmatrix}, \qquad r_i^-,\, r_i^+ \colon [0, \ell_i]\times [0, T] \rightarrow \mathbb{R}^6.
\end{align*}
\end{linenomath}
In addition, in order to write the transmission conditions concisely, we introduce the invertible matrix $\gamma_i^n$ and positive definite symmetric matrix $\sigma_i^n$
\begin{linenomath}
\begin{align*}
\gamma_i^n &= (\overline{R}_i  \mathbf{C}_i^{\sfrac{1}{2}} U_i^\intercal)(\mathbf{x}_i^n), \qquad
\sigma_i^n = (\overline{R}_i  \mathbf{C}_i^{-\sfrac{1}{2}} U_i^\intercal D_i^{-1} U_i \mathbf{C}_i^{-\sfrac{1}{2}} \overline{R}_i^\intercal)(\mathbf{x}_i^n)
\end{align*}
\end{linenomath}
for all $n \in \mathcal{N}$ and $i \in \mathcal{I}^n$.
Notice that $\sigma_i^n \gamma_i^n = \overline{R}_i(\mathbf{x}_i^n) \mathbf{C}_i^{-\sfrac{1}{2}} U_i^\intercal D_i^{-1}$.

\medskip

\noindent Then, taking \eqref{eq:def_bfDi_Li}-\eqref{eq:inverseLi} into account, the system obtained by applying the change of variable \eqref{eq:change_var_Li} to System \eqref{eq:syst_physical} reads
\begin{linenomath}
\begin{subnumcases}{\label{eq:syst_diagonal}}
\label{eq:r_IGEB_gov}
\partial_t r_i + \mathbf{D}_i \partial_x r_i + B_i r_i = g_i(\cdot, r_i), &\hspace{-0.45cm}$\text{in } (0, \ell_i)\times(0, T), \, i \in \mathcal{I}$\\
\label{eq:r_IGEB_cont_velo}
\gamma_i^n (r_i^- + r_i^+)(\mathbf{x}_i^n, t) &\nonumber \vspace{-0.2cm}\\
\qquad \quad = \gamma_{i^n}^n (r_{i^n}^- + r_{i^n}^+)(\mathbf{x}_{i^n}^n, t), &\hspace{-0.45cm}$t \in (0, T), \, i \in \mathcal{I}^n, \, n \in \mathcal{N}_M$\\
\label{eq:r_IGEB_Kirchhoff}
\sum_{i\in\mathcal{I}^n} \frac{\tau_i^n}{2} \sigma_i^n \gamma_i^n (r_i^- -  r_i^+)(\mathbf{x}_i^n, t) = q_n(t), &\hspace{-0.45cm}$t \in (0, T), \, n \in \mathcal{N}_M$\\
\label{eq:r_IGEB_condNSz}
(r_{i^n}^- - r_{i^n}^+)(\mathbf{x}_{i^n}^n, t) &\vspace{-0.2cm} \nonumber\\
\qquad \quad = 2 \tau_{i^n}^n (D_{i^n} U_{i^n}\mathbf{C}_{i^n}^{\sfrac{1}{2}})(\mathbf{x}_{i^n}^n) q_n(t), &\hspace{-0.45cm}$t \in (0, T), \, n \in \mathcal{N}_S^N$\\
\label{eq:r_IGEB_condNSv}
(r_{i^n}^- + r_{i^n}^+)(\mathbf{x}_{i^n}^n, t) &\vspace{-0.2cm} \nonumber\\
\qquad \quad = 2 (U_{i^n} \mathbf{C}_{i^n}^{- \sfrac{1}{2}})(\mathbf{x}_{i^n}^n) q_n(t), &\hspace{-0.45cm}$t \in (0, T), \, n \in \mathcal{N}_S^D$\\
\label{eq:r_IGEB_ini_cond}
r_i(x, 0) = r_i^0(x), &\hspace{-0.45cm}$x \in (0, \ell_i), \, i \in \mathcal{I}$.
\end{subnumcases}
\end{linenomath}
In the governing system \eqref{eq:r_IGEB_gov}, the coefficient $B_i \in C^1([0, \ell_i]; \mathbb{R}^{12\times 12})$ is defined by $B_i(x) = L_i(x) \overline{B}_i(x) L_i(x)^{-1} + L_i(x) A_i(x) \frac{\mathrm{d}}{\mathrm{d}x}L_i^{-1}(x)$, while the source is defined by $g_i(x,u) = L_i(x) \overline{g}_i(x,L_i(x)^{-1} u)$ for all $i \in \mathcal{I}$, $x \in [0, \ell_i]$ and $u \in \mathbb{R}^{12}$. The corresponding initial data in \eqref{eq:r_IGEB_ini_cond} for this system is $r_i^0 = L_i y_i^0$.

\subsection{Outgoing and incoming information}
\label{subsec:out_in_info}

\begin{figure}
  \begin{subfigure}{0.6\textwidth}
  \centering
    \includegraphics[scale=0.75]{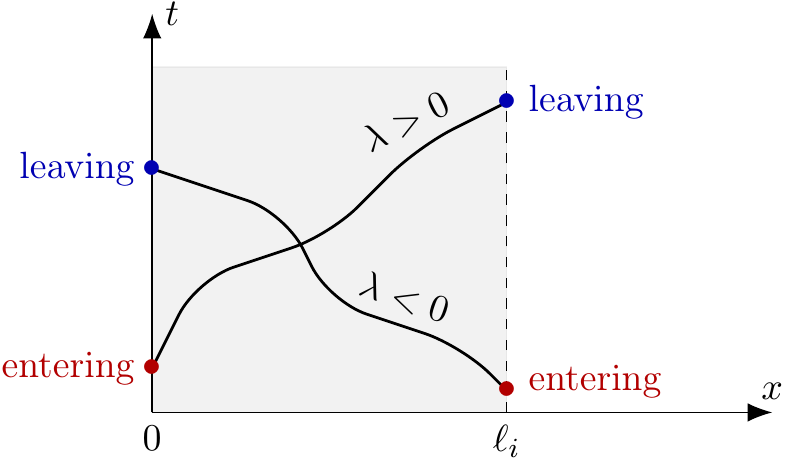}
\caption{Characteristic curves $(\mathbf{x}(t), t)$ with $\frac{\mathrm{d}\mathbf{x}}{\mathrm{d}t}(t) = \lambda(\mathbf{x}(t))$, where either $\lambda(s)>0$ or $\lambda(s)<0$ for all $s \in [0, \ell_i]$.}
\label{fig:charac}
  \end{subfigure}%
  \hspace*{\fill}   
  \begin{subfigure}{0.4\textwidth}
    \centering
\includegraphics[scale=0.7]{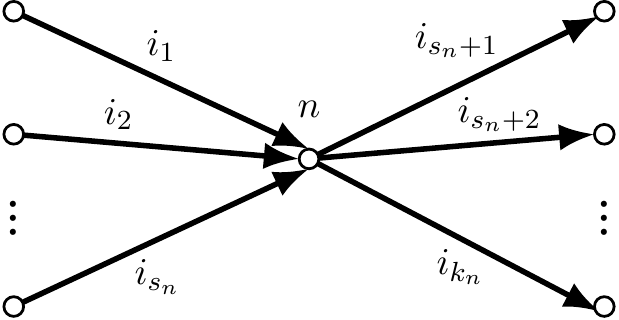}
    \caption{Form of $\mathcal{I}_n$ at a node $n$.}
    \label{fig:NM_notation}
  \end{subfigure}

\caption{Outgoing and incoming information.}
\label{fig:out_in_info}
\end{figure}

For any $n \in \mathcal{N}$, let us denote by $s_n \in \{0, \ldots, k_n\}$  (resp. by $k_n-s_n$) the number of beams ending (resp. starting) at the node $n$; see Fig. \ref{fig:NM_notation}. More precisely, we suppose that 
\begin{linenomath}
\begin{align*}
\mathcal{I}^n = \{i_1, \ldots, i_{k_n}\} \quad \text{with} \quad i_1 < i_2 < \ldots < i_{s_n} \quad \text{and} \quad i_{s_n+1} < i_{s_n+2} < \ldots < i_{k_n},
\end{align*}
\end{linenomath}
and that $\tau_{i_\alpha}^n = -1$ for all $\alpha \in \{1, \ldots, s_n\}$, while $\tau_{i_\alpha}^n = +1$ for all $\alpha \in \{s_n+1, \ldots, k_n\}$. This is not to be confused with the notation $i^n$ introduced in \eqref{eq:def_in}.

\medskip

\noindent For any node $n$ and any incident edge $i \in \mathcal{I}^n$, we call \emph{outgoing} (resp. \emph{incoming}) \emph{information}, the components of $r_i$ which correspond to characteristics entering (resp. leaving) the domain $[0, \ell_i]\times[0, +\infty)$ at this node (see Fig. \eqref{fig:charac}).

Namely, here, the outgoing (resp. incoming) information at the node $n$ is $r_{i_\alpha}^-(\ell_{i_\alpha}, t)$ (resp. $r_{i_\alpha}^+(\ell_{i_\alpha}, t)$) for all $\alpha \in \{1, \ldots, s_n\}$, and $r_{i_k}^+(0, t)$ (resp. $r_{i_k}^-(0, t)$) for all $k \in \{s_n+1, \ldots, k_n\}$.
We then define the functions $r_n^\mathrm{out}, r_n^\mathrm{in} \colon [0, T] \rightarrow \mathbb{R}^{6k_n}$ by
\begingroup 
\setlength\arraycolsep{3pt}
\renewcommand*{\arraystretch}{0.9}
\begin{linenomath}
\begin{align*}
r_n^\mathrm{out}(t) = \begin{bmatrix}
r_{i_1}^-(\ell_{i_1}, t)\\
\vdots\\
r_{i_{s_n}}^-(\ell_{i_{s_n}}, t)\\
r_{i_{s_n+1}}^+(0, t) \\
\vdots\\
r_{i_{k_n}}^+(0, t)
\end{bmatrix}, \qquad r_n^\mathrm{in}(t) = \begin{bmatrix}
r_{i_1}^+(\ell_{i_1}, t)\\
\vdots\\
r_{i_{s_n}}^+(\ell_{i_{s_n}}, t)\\
r_{i_{s_n+1}}^-(0, t) \\
\vdots\\
r_{i_{k_n}}^-(0, t)
\end{bmatrix}.
\end{align*}
\end{linenomath}
\endgroup
We also denote $r_n^\mathrm{out} = ((r_{n,1}^\mathrm{out})^\intercal, \ldots, (r_{n,k_n}^\mathrm{out})^\intercal)$, where $r_{n,\alpha}^\mathrm{out}(t) \in \mathbb{R}^6$ for all $\alpha \in \{1, \ldots, k_n\}$; a similar notation is used for $r_n^\mathrm{in}$.

\medskip

\noindent Taking into account this notation, and the sign of $\tau_i^n$, we observe that the Kirchhoff condition \eqref{eq:r_IGEB_Kirchhoff} is equivalent to
\begin{linenomath}
\begin{align*}
-\sum_{\alpha =1}^{s_n} \sigma_{i_\alpha}^n \gamma_{i_\alpha}^n (r_{i_\alpha}^- -  r_{i_\alpha}^+)(0, t) + \sum_{k=s_n+1}^{k_n} \sigma_{i_k}^n \gamma_{i_k}^n (r_{i_k}^- -  r_{i_k}^+)(\ell_{i_k}, t) = 2q_n(t),
\end{align*}
\end{linenomath}
which can also be written in the form
\begin{linenomath} 
\begin{align*}
\sum_{\alpha=1}^{k_n} \sigma_{i_\alpha}^n \gamma_{i_\alpha}^n r_{n,\alpha}^\mathrm{out}(t) = \sum_{\alpha=1}^{k_n} \sigma_{i_\alpha}^n \gamma_{i_\alpha}^n r_{n,\alpha}^\mathrm{in}(t) + 2q_n(t).
\end{align*}
\end{linenomath}
The continuity condition \eqref{eq:r_IGEB_cont_velo} is equivalent to
\begin{linenomath}
\begin{align*}
\gamma_{i_1}^n (r_{i_1}^- + r_{i_1}^+)(\mathbf{x}_{i_1}^n, t) = \gamma_{i_\alpha}^n (r_{i_\alpha}^- + r_{i_\alpha}^+)(\mathbf{x}_{i_\alpha}^n, t) \quad \text{for all }\alpha \in \{2, \ldots, k_n\}
\end{align*}
\end{linenomath}
which can be seen to also write as 
\begin{linenomath}
\begin{align*}
\gamma_{i_1}^n r_{n,1}^\mathrm{out}(t) - \gamma_{i_\alpha}^n r_{n,\alpha}^\mathrm{out}(t) = - \gamma_{i_1}^n r_{n,1}^\mathrm{in}(t) + \gamma_{i_\alpha}^n r_{n,\alpha}^\mathrm{in}(t) \quad \text{for all }\alpha \in \{2, \ldots, k_n\}.
\end{align*}
\end{linenomath}
Hence, at any multiple node $n$, the transmission conditions \eqref{eq:r_IGEB_Kirchhoff}-\eqref{eq:r_IGEB_cont_velo} are equivalent to the following system:
\begin{linenomath}
\begin{align*}
\mathbf{A}_n \mathbf{G}_n r_n^\mathrm{out}(t) = \mathbf{B}_n \mathbf{G}_n r_n^\mathrm{in}(t) + \begingroup 
\setlength\arraycolsep{3pt}
\renewcommand*{\arraystretch}{0.8}
\begin{bmatrix}
2q_n(t) \\ \mathbf{0}_{6k_n-6, 1}
\end{bmatrix},
\endgroup
\end{align*}
\end{linenomath}
where $\mathbf{A}_n, \mathbf{B}_n, \mathbf{G}_n \in \mathbb{R}^{6k_n \times 6k_n}$ are defined by
\begin{linenomath}
\begin{align*}
\mathbf{A}_n = \begin{bmatrix}
\mathbf{a}_n & \mathbf{b}_n\\
\mathbf{c}_n & \mathbf{I}_{6(k_n-1)}
\end{bmatrix}, \quad
\mathbf{B}_n = \begin{bmatrix}
\mathbf{a}_n & \mathbf{b}_n\\
-\mathbf{c}_n & -\mathbf{I}_{6(k_n-1)}
\end{bmatrix}, \quad \mathbf{G}_n = \mathrm{diag}(\gamma_{i_1}^n, \ldots, \gamma_{i_{k_n}}^n),
\end{align*}
\end{linenomath}
the sub-matrices $\mathbf{a}_n \in \mathbb{R}^{6\times 6}$, $\mathbf{b}_n \in \mathbb{R}^{6\times 6(k_n-1)}$ and $\mathbf{c}_n \in \mathbb{R}^{6(k_n-1) \times 6}$ being defined by $\mathbf{a}_n = \sigma_{i_1}^n$, $\mathbf{b}_n = \big[ \sigma^n_{i_2} \ \sigma^n_{i_3} \ \ldots \   \sigma^n_{i_{k_n}}\big]$ and $\mathbf{c}_n = - \big[\mathbf{I}_6 \ \mathbf{I}_6 \ \ldots \ \mathbf{I}_6 \big]^\intercal$.

The matrix $\mathbf{G}_n$ is clearly invertible and one can check that $\mathbf{A}_n$ is also invertible (using the same reasoning as \cite[Lemma 4.4]{R2020}).
For any $n \in \mathcal{N}$, let us define $\mathcal{B}_n  \in \mathbb{R}^{6k_n \times 6 k_n}$ by
\begin{linenomath}
\begin{align*}
\mathcal{B}_n = \left\{\begin{aligned}
&\mathbf{G}_n^{-1}  \mathbf{A}_n^{-1} \mathbf{B}_n \mathbf{G}_n && n \in \mathcal{N}_M\\
&\mathbf{I}_6 && n \in \mathcal{N}_S^N\\
&-\mathbf{I}_6 && n \in \mathcal{N}_S^D,
\end{aligned}\right.
\end{align*}
\end{linenomath}
as well as $\mathcal{Q}_n \in \mathbb{R}^{6k_n \times 6 k_n}$ and $\mathbf{q}_n \in C^1([0, T]; \mathbb{R}^{6k_n})$ by
\begin{linenomath}
\begin{align*}
\mathcal{Q}_n = \left\{\begin{aligned}
&2 \mathbf{G}_n^{-1}  \mathbf{A}_n^{-1} && n \in \mathcal{N}_M\\
&2 \tau_{i^n}^n (D_{i^n} U_{i^n}\mathbf{C}_{i^n}^{\sfrac{1}{2}})(\mathbf{x}_{i^n}^n) && n \in \mathcal{N}_S^N\\
&2 (U_{i^n} \mathbf{C}_{i^n}^{- \sfrac{1}{2}})(\mathbf{x}_{i^n}^n)  && n \in \mathcal{N}_S^D,
\end{aligned}\right. \quad
\mathbf{q}_n(t) = \begin{cases}
\begingroup 
\setlength\arraycolsep{3pt}
\renewcommand*{\arraystretch}{0.9}
\begin{bmatrix}
q_n(t) \\ \mathbf{0}_{6k_n-6, 1}
\end{bmatrix}\endgroup & n \in \mathcal{N}_M\\
q_n(t) & n \in \mathcal{N}_S.
\end{cases}
\end{align*}
\end{linenomath}
Then, System \eqref{eq:syst_diagonal} also reads
\begin{linenomath}
\begin{align*}
\begin{dcases}
\partial_t r_i + \mathbf{D}_i(x) \partial_x r_i + B_i(x) r_i = g_i(x, r_i) &\text{in } (0, \ell_i)\times(0, T), \, i \in \mathcal{I}\\
r^\mathrm{out}_n(t) = \mathcal{B}_n r^\mathrm{in}_n(t) + \mathcal{Q}_n \mathbf{q}_n(t) & t \in (0, T), \, n \in \mathcal{N}\\
r_i(x, 0) = r_i^0(x) & x \in (0, \ell_i), \, i \in \mathcal{I}.
\end{dcases}
\end{align*}
\end{linenomath}

\subsection{Proof of Theorem \ref{th:existence}}
\label{subsec:proof_exist}

Relying upon Subsections \ref{subsec:hyperbolic}, \ref{subsec:change_var} and \ref{subsec:out_in_info}, and \cite{li2010controllability, wang2006exact}, we now prove Theorem \ref{th:existence}.

\begin{proof}[Proof of Theorem \ref{th:existence}]

The local and semi-global existence and uniqueness of $C_{x,t}^1$ solutions to general one-dimensional quasilinear hyperbolic systems have been addressed in \cite[Lem. 2.3, Th. 2.1]{wang2006exact}, which is an extension of \cite[Lem. 2.3, Th. 2.5]{li2010controllability} to nonautonomous systems.

Such results may be applied to the network system \eqref{eq:syst_physical}, since it can be written as a single larger hyperbolic system. One needs only to apply the change of variable $\widetilde{r}_i(\xi, t) = r_i(\ell_i \ell^{-1} \xi, t)$ for all $i \in \mathcal{I}$, $\xi \in [0, \ell]$ and $t \in [0, T]$ for some $\ell>0$, in order to make the spatial domain identical for all beams, and consider the larger $\mathbb{R}^{12N}$-valued unknown $\widetilde{r} = (\widetilde{r}_1^{\,\intercal}, \ldots, \widetilde{r}_N^{\, \intercal})^\intercal$. Then, $\widetilde{r}$ is governed by
\begin{linenomath}
\begin{align} \label{eq:single_hyperb_syst}
\begin{cases}
\partial_t \widetilde{r} + \widetilde{\mathbf{D}}(\xi) \partial_\xi \widetilde{r} + \widetilde{B}(\xi) \widetilde{r} = \widetilde{g}(\widetilde{r}) & \text{in }(0, \ell)\times(0, T)\\
\widetilde{r}^\mathrm{\, out}(t) = \widetilde{\mathcal{B}} \, \widetilde{r}^\mathrm{\, in}(t) + \widetilde{Q}\widetilde{\mathbf{q}}(t) & t \in (0, T)\\
\widetilde{r}(\xi, 0) = \widetilde{r}^0(\xi) & \xi \in (0, \ell),
\end{cases}
\end{align}
\end{linenomath}
where $\widetilde{\mathbf{D}}, \widetilde{B}, \widetilde{\mathcal{B}}, \widetilde{\mathcal{Q}}, \widetilde{\mathbf{q}}, \widetilde{r}^\mathrm{out}, \widetilde{r}^\mathrm{in}, \widetilde{r}^0$ and $\widetilde{g}$ are defined by
\begin{linenomath}
\begin{align*}
&\widetilde{\mathbf{D}}(\cdot) = \ell \mathrm{diag}\left(\ell_1^{-1}\mathbf{D}_1(\ell_1 \ell^{-1} \cdot), \ldots, \ell_N^{-1} \mathbf{D}_N(\ell_N \ell^{-1} \cdot) \right),\\
&\widetilde{B}(\cdot) = \mathrm{diag}\left(B_1(\ell_1 \ell^{-1} \cdot), \ldots, B_N(\ell_N \ell^{-1} \cdot)\right),\\
&\widetilde{\mathcal{B}} = \mathrm{diag}\left(\mathcal{B}_1, \ldots, \mathcal{B}_{\#\mathcal{N}}\right), \quad \widetilde{\mathcal{Q}} = \mathrm{diag}\left(\mathcal{Q}_1, \ldots, \mathcal{Q}_{\#\mathcal{N}}\right), \quad \widetilde{\mathbf{q}} = (\mathbf{q}_1^\intercal, \ldots, \mathbf{q}_{\# \mathcal{N}}^\intercal)^\intercal,\\
&\widetilde{r}^\mathrm{out} = \left((r_1^\mathrm{out})^\intercal, \ldots, (r_{\#\mathcal{N}}^\mathrm{out})^\intercal\right)^\intercal, \quad \widetilde{r}^\mathrm{in} = \left((r_1^\mathrm{in})^\intercal, \ldots, (r_{\#\mathcal{N}}^\mathrm{in})^\intercal\right)^\intercal,\\
&\widetilde{r}^0(\cdot) = \left(r^0(\ell_1 \ell^{-1} \cdot)^\intercal, \ldots, r^0(\ell_N, \ell^{-1} \cdot)^\intercal \right)^\intercal\\
&\widetilde{g}(\cdot, \mathbf{u}) = \left(\widetilde{g}_1(\ell_1 \ell^{-1} \cdot, \mathbf{u}_1)^\intercal, \ldots, \widetilde{g}_N(\ell_N \ell^{-1} \cdot, \mathbf{u}_N)^\intercal \right)^\intercal,
\end{align*}
\end{linenomath}
where we denoted $\mathbf{u} = (\mathbf{u}_1^\intercal, \ldots, \mathbf{u}_N^\intercal)^\intercal$ with $\mathbf{u}_i \in \mathbb{R}^{12}$ for all $i \in \mathcal{I}$.

Due to Subsection \ref{subsec:out_in_info}, the boundary conditions of \eqref{eq:single_hyperb_syst} are directly written in such a way that the outgoing information for System \eqref{eq:single_hyperb_syst} is a function of the incoming information, a sufficient criteria in \cite{li2010controllability, wang2006exact} to deduce well-posedness of the system.
\end{proof}

\section{Controllability of nodal profiles for the IGEB network}
\label{sec:controllability}

We now consider the A-shaped network defined by \eqref{eq:A_netw} and our aim is to prove Theorem \ref{th:controllability}. As pointed out in Section \ref{sec:model_results}, we will solve several forward and sidewise problems for \eqref{eq:IGEB_gov} (see Steps 1.3, 1.4, 1.5). The existence and uniqueness of semi-global in time solutions to these problems is provided by \cite{li2010controllability, wang2006exact}, as in Section \ref{sec:exist} for the overall network.

\begin{proof}[Proof of Theorem \ref{th:controllability}]
The proof is divided in three steps.
We start by constructing a solution satisfying all transmission conditions and the nodal profiles.
The choice of $\overline{T}$ (see \eqref{eq:minT}), and thus $T^*$, is explained in Step 2.

\medskip

\noindent \textit{Step 1.1 (see Fig. \ref{fig:A} top-left).}
Consider the forward problem for the entire network until time $\overline{T}$, where at the simple nodes $n \in \{4,5\}$, the controls $q_4, q_5$ are replaced by any functions $\overline{q}_4, \overline{q}_5 \in C^1([0; \overline{T}]; \mathbb{R}^6)$ satisfying the first-order compatibility conditions of \eqref{eq:syst_physical}. By Theorem \ref{th:existence}, for any $\gamma>0$ small enough, there exists $\delta>0$ such that \eqref{eq:syst_physical} admits a unique solution $(y_i^f)_{i\in \mathcal{I}} \in \prod_{i=1}^N C^1([0, \ell_i]\times[0, \overline{T}]; \mathbb{R}^{12})$ with $\|y_i^f\|_{C_{x,t}^1}\leq \gamma$, provided that $\|y_i^0\|_{C_x^1}+ \|q_n\|_{C_t^1}+\|\overline{q}_k\|_{C_t^1} \leq \delta$ for all $i \in \mathcal{I}, n \in \{1, 2, 3\}$ and $k \in \{4, 5\}$.

Similarly to the state $y_i$ (see \eqref{eq:form_yi}), we denote $y_i^f = ((v_i^f)^\intercal, (z_i^f)^\intercal)^\intercal$, and later on, we will also use such a notation for $\overline{y}_i$, $\overline{\overline{y}}_i$, $\widetilde{y}_i$ and $\mathbf{y}_i$.

\medskip

\noindent \textit{Step 1.2.}
At the node $n=1$, to obtain ``data'' $\overline{\overline{y}}_1, \overline{\overline{y}}_2 \in C^1([0, T])$ for the entire time interval with small $C^1$ norm and fulfilling the transmission conditions at this node, we connect $y_i^f$ (from Step 1.1), which is defined on $[0, \overline{T}]$, to the nodal profiles $\overline{y}_i$ defined on $[T^*, T]$ (see \eqref{eq:aim}).

We first find functions $\overline{\overline{v}}_1, \overline{\overline{z}}_1 \in C^1([0, T]; \mathbb{R}^6)$ \textcolor{black}{with $\|\overline{\overline{v}}_1\|_{C_t^1} + \|\overline{\overline{z}}_1\|_{C_t^1} \leq \gamma$} and such that
\begin{linenomath}
\begin{align} \label{eq:barbar_v1_z1}
\overline{\overline{v}}_1(t) = \left\{ \begin{aligned}
&v_1^f(0, t) && t \in [0, \overline{T}]\\
&\overline{v}_1(t) && t \in [T^*, T]
\end{aligned}\right., \quad \overline{\overline{z}}_1(t) = \left\{\begin{aligned}
&z_1^f(0, t) && t \in [0, \overline{T}]\\
&\overline{z}_1(t) && t \in [T^*, T]
\end{aligned}\right.,
\end{align}
\end{linenomath}
completing the gap between via, for example, cubic Hermite splines fulfilling the values and first derivatives prescribed by \eqref{eq:barbar_v1_z1} at $t = \overline{T}$ and $t = T^*$. The $C^1$ norm of such functions is bounded by that of $v_i^f, \overline{v}_i$ and $z_i^f,\overline{z}_i$, respectively.

Then, we define $\overline{\overline{v}}_2, \overline{\overline{z}}_2 \in C^1([0, T]; \mathbb{R}^6)$ by $\overline{\overline{v}}_2(t) = (\overline{R}_2^\intercal \overline{R}_1)(0) \overline{\overline{v}}_1(t)$ and $\overline{\overline{z}}_2(t) = - (\overline{R}_2^\intercal \overline{R}_1)(0) \overline{\overline{z}}_1(t)$, so that both the continuity and Kirchhoff conditions \eqref{eq:IGEB_cont_velo}-\eqref{eq:IGEB_Kirchhoff} are fulfilled.
Since $\overline{R}_i$ ($i\in\mathcal{I}$) is unitary and independent of time, one has $|\overline{\overline{v}}_1| = |\overline{\overline{v}}_2|$ and $|\overline{\overline{z}}_1| = |\overline{\overline{z}}_2|$, as well as $|\frac{\mathrm{d}}{\mathrm{d}t}\overline{\overline{v}}_1| = |\frac{\mathrm{d}}{\mathrm{d}t}\overline{\overline{v}}_2|$ and $|\frac{\mathrm{d}}{\mathrm{d}t}\overline{\overline{z}}_1| = |\frac{\mathrm{d}}{\mathrm{d}t}\overline{\overline{z}}_2|$, \textcolor{black}{implying that $\|\overline{\overline{v}}_2\|_{C_t^1} + \|\overline{\overline{z}}_2\|_{C_t^1}\leq \gamma$.}

\medskip

\noindent \textit{Step 1.3 (see Fig. \ref{fig:A} top-right).}
Now that we have $\overline{\overline{y}}_i$, we consider the sidewise (rightward) problem on $[0, \ell_i] \times [0, T]$ for the edges $i\in \{1, 2\}$ (see Remark \ref{rem:controllability_thm} \ref{subrem:sidewise}), where at $x=0$ the ``initial data'' is $\overline{\overline{y}}_i$, at $t=0$ the ``boundary condition'' prescribes the velocities as $v_i(x,0) = v_i^0(x)$ (thus using a part of the initial conditions of System \eqref{eq:syst_physical}), and at $t=T$ we set the artificial ``boundary condition'' $z_i(x,T) = \overline{q}_i(x)$ for any function $\overline{q}_i \in C^1([0, \ell_i]; \mathbb{R}^6)$.
Then, for any $\varepsilon_1>0$ small enough, there exists $\delta_1>0$ such that the rightward problem admits a unique solution $y_i \in C^1([0, \ell_i]\times[0, T]; \mathbb{R}^{12})$ with $\|y_i\|_{C_{x,t}^1}\leq \varepsilon_1$, provided that $\|\overline{\overline{y}}_i\|_{C_{t}^1} + \|v_i^0\|_{C_x^1} + \|\overline{q}_i\|_{C_x^1}\leq \delta_1$ for all $i \in \{1, 2\}$.

\begin{figure} \centering
\includegraphics[scale=0.55]{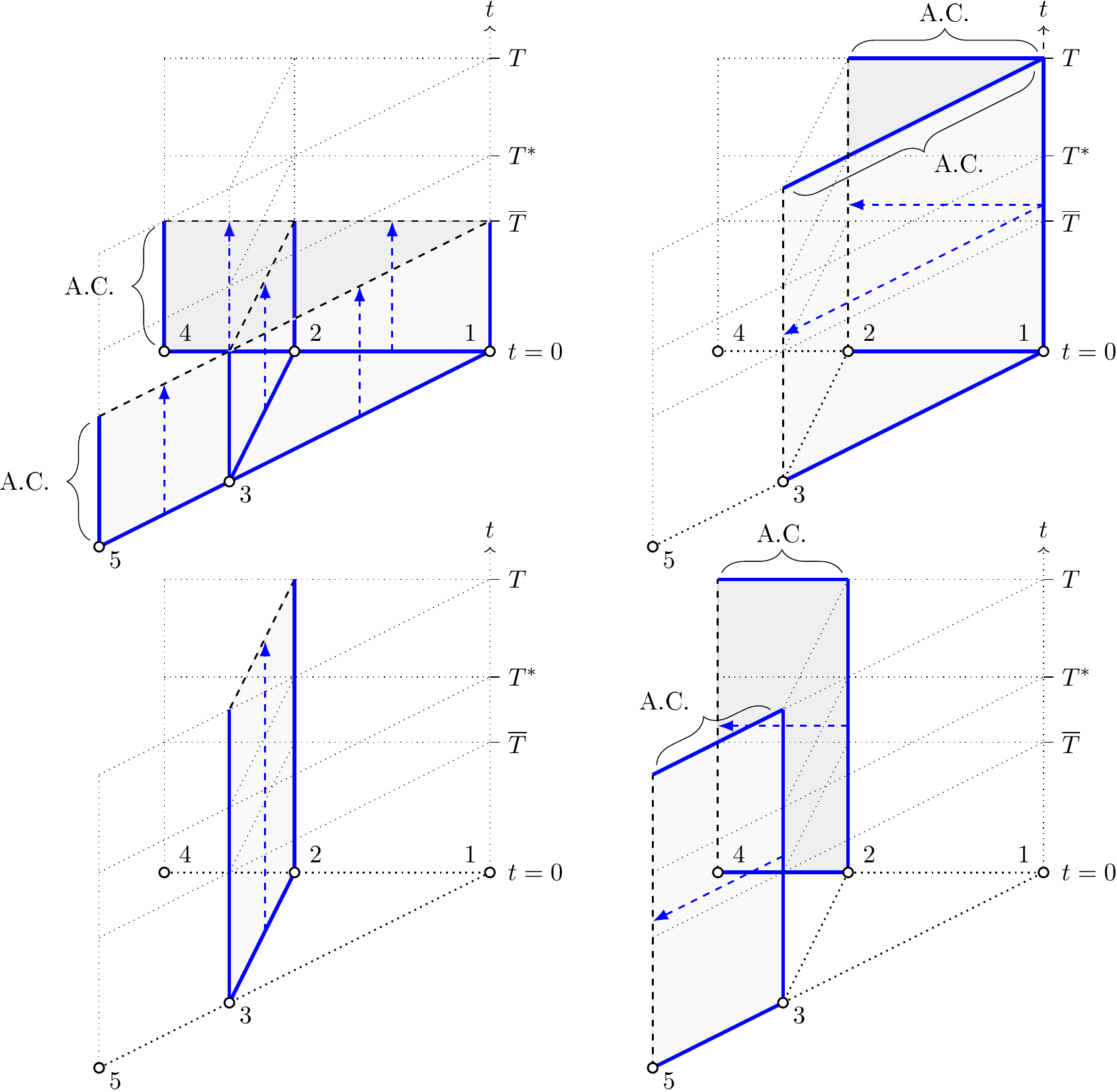}
\caption{Steps 1.1, 1.3, 1.4, 1.5 of the construction of the solution (top to bottom, left to right), where ``A.C.'' stands for ``artificial conditions''.}
\label{fig:A}
\end{figure}

\medskip

\noindent \textit{Step 1.4 (see Fig. \ref{fig:A} bottom-left).}
Using $y_1(\ell_1, \cdot)$, $y_2(\ell_2, \cdot)$ (from Step 1.3) as data, consider the forward problem on $[0, \ell_3]\times[0, T]$ for the edge $i=3$, with the initial conditions of \eqref{eq:syst_physical}, and, as boundary conditions at $x=0$ and $x=\ell_3$, the velocities prescribed as
\begin{linenomath}
\begin{align} \label{eq:pb3full_BC_0_l3}
v_3 (0, t) = \overline{R}_3(0)^\intercal \overline{R}_1(\ell_1) v_1(\ell_1, t), \qquad 
v_3 (\ell_3, t) = \overline{R}_3(\ell_3)^\intercal \overline{R}_2(\ell_2) v_2(\ell_2, t),
\end{align}
\end{linenomath}
so that the obtained solution $y_3$ together with $y_1, y_2$ (provided by Step 1.3) fulfill the continuity conditions \eqref{eq:IGEB_cont_velo} at the nodes $n \in \{2, 3\}$.
Then, for any $\varepsilon_2>0$ small enough, there exists $\delta_2>0$ such that this problem admits a unique solution $y_3 \in C^1([0, \ell_3]\times[0, T]; \mathbb{R}^{12})$ with $\|y_3\|_{C_{x,t}^1}\leq \varepsilon_2$, provided that $\|y_3^0\|_{c_x^1}+\|v_i(\ell_i, \cdot)\|_{C_t^1}+\|q_n\|_{C_t^1} \leq \delta_2$ for all $i \in \{1, 2\}$ and $n \in \{2, 3\}$.

\medskip

\noindent \textit{Step 1.5 (see Fig. \ref{fig:A} bottom-right).}
Finally, using $y_1(\ell_1, \cdot)$, $y_2(\ell_2, \cdot)$ (from Step 1.1) and $y_3(0, \cdot)$, $y_3(\ell_3, \cdot)$ (from Step 1.3) as data, consider the rightward problem on $[0, \ell_i]\times[0, T]$ for the edges $i \in \{4, 5\}$ similar to that of Step 1.3 except for the choice of the ``initial data'' at $x=0$, denoted by $\widetilde{y}_i$, that we define by
\begin{linenomath}
\begin{align}\label{eq:v45_x=0}
\widetilde{y}_4 &= \begin{bmatrix}
\overline{R}_4(0)^\intercal (\overline{R}_1v_1)(\ell_1, \cdot)\\
\overline{R}_4(0)^\intercal( (\overline{R}_1z_1)(\ell_1, \cdot) - (\overline{R}_3z_3)(0, \cdot)-q_2)
\end{bmatrix}\\
\label{eq:z45_x=0}
\widetilde{y}_5 &= \begin{bmatrix}
\overline{R}_5(0)^\intercal (\overline{R}_2 v_2)(\ell_2, \cdot)\\
\overline{R}_5(0)^\intercal ((\overline{R}_2 z_2)(\ell_2, \cdot) +(\overline{R}_3 z_3)(\ell_3, \cdot) - q_3)
\end{bmatrix}.
\end{align}
\end{linenomath}
Then, for any $\varepsilon_3>0$ small enough, there exists $\delta_3>0$ such that this problem admits a unique solution $y_i \in C^1([0, \ell_i]\times[0 , T]; \mathbb{R}^{12})$ with $\|y_i\|_{C_{x,t}^1}\leq \varepsilon_3$, provided that $\|v_i^0\|_{C_x^1}+ \|\overline{q}_i\|_{C_x^1} + \|q_n\|_{C_t^1} \leq \delta_3$ for all $i \in \{4,5\}$ and $n \in\{2, 3\}$, and $\|y_k(\ell_k, \cdot)\|_{C_t^1} + \|y_3(0, \cdot)\|_{C_t^1} \leq \delta_3$ for all $k\in \{1, 2, 3\}$.

Note that the $\widetilde{y}_i$ for $i \in \{4, 5\}$ have been chosen in such a way that the solutions $y_4, y_5$ together with $y_1, y_2, y_3$ (provided by Step 1.1 and Step 1.3), necessarily fulfill the transmission conditions \eqref{eq:IGEB_cont_velo}-\eqref{eq:IGEB_Kirchhoff} at the nodes $n \in \{2, 3\}$.

\medskip

\noindent It remains to prove that the solution $(y_i)_{i\in\mathcal{I}}$ constructed in Step 1 in fact also fulfills the initial conditions \eqref{eq:IGEB_ini_cond} of the overall network, by showing that $y_i$ coincides with $y_i^f$ on some domain including $[0, \ell_i]\times \{0\}$ for all $i \in \mathcal{I}$.

\medskip

\noindent \textit{Step 2.1 (see Fig. \ref{fig:A_ini} leftmost).}
First, consider the edges $i \in \{1, 2\}$. We will see that not only $y_i$ fulfills \eqref{eq:IGEB_ini_cond}, but one also has (see \eqref{eq:def_Lambdai_Ti})
\begin{linenomath}
\begin{align}
\label{eq:coinc12_BC}
y_i(\ell_i, t) = y_i^f(\ell_i, t), \quad &t \in [0, \max\{T_4, T_5\}], \, i \in \{1, 2\}.
\end{align}
\end{linenomath}
Let $i \in \{1, 2\}$, and let $\mathbf{t}_i \in C^1([0, \ell_i])$ be the function with derivative $\frac{\mathrm{d}}{\mathrm{d}x}\mathbf{t}_i(x) = \min_{1\leq k \leq 12} \frac{1}{\lambda_i^k(x)}$ in $[0, \ell_i]$, which is also equal to $-\Lambda_i(x)$ (see \eqref{eq:sign_eigval}-\eqref{eq:def_Lambdai_Ti}), and such that $\mathbf{t}_i(0) = T_i + \max \{T_4, T_5\}$. Then, $\mathbf{t}_i$ describes a curve in $[0, \ell_i]\times[0, T]$ that passes through $(0, T_i + \max\{T_4, T_5\})$ and we may also write
\begin{linenomath}
\begin{align*}
    \mathbf{t}_i(x) = T_i + \max \{T_4, T_5\} - \int_0^x \Lambda_i(s) ds.
\end{align*}
\end{linenomath}
The definition of $T_i$ in \eqref{eq:def_Lambdai_Ti} ensures that $[0, \ell_i]\times[0, \max\{T_4, T_5\}]$ is a subset of the domain $\mathcal{R}(i, \mathbf{t}_i)$ defined by
\begin{linenomath}
\begin{align}\label{eq:def_dom_calR}
\mathcal{R}(i, \mathbf{t}_i) := \{(x,t)\colon 0 \leq x \leq \ell_i, \ 0 \leq t \leq \mathbf{t}_i(x)\}.
\end{align}
\end{linenomath}
Both $y_i$ and $y_i^f$ are by definition solutions to the one-sided sidewise (rightward) problem with ``initial data'' $\overline{\overline{y}}_i$ at $x=0$ and boundary data $v_i^0$ at $t=0$.
The definition of $\mathbf{t}_i$ ensures that any characteristic curve\footnote{By characteristic curves passing by $(x_\circ,t_\circ)$, we mean the curves specified by the functions $\mathbf{t}_i^k$ with derivative $\frac{\mathrm{d}}{\mathrm{d}s} \mathbf{t}_i^k(s) = \lambda_i^k(s)^{-1}$ and such that $\mathbf{t}_i^k(x_\circ) = t_\circ$, for $k \in \{1, \ldots, 12\}$.
}
of this problem passing by $(x,t) \in \mathcal{R}(i, \mathbf{t}_i)$ is necessarily entering the domain $\mathcal{R}(i, \mathbf{t}_i)$ at $\{0\} \times [0, T_i+\max \{T_4, T_5\}]$ or at $[0, \ell_i] \times \{0\}$. Thus, by \cite[Section 1.7]{li2016book} the solution in $C^1(\mathcal{R}(i, \mathbf{t}_i); \mathbb{R}^{12})$ to this sidewise problem is unique, and $y_i \equiv y_i^f$ in $\mathcal{R}(i, \mathbf{t}_i)$.

\begin{figure}\centering
\includegraphics[scale=0.7]{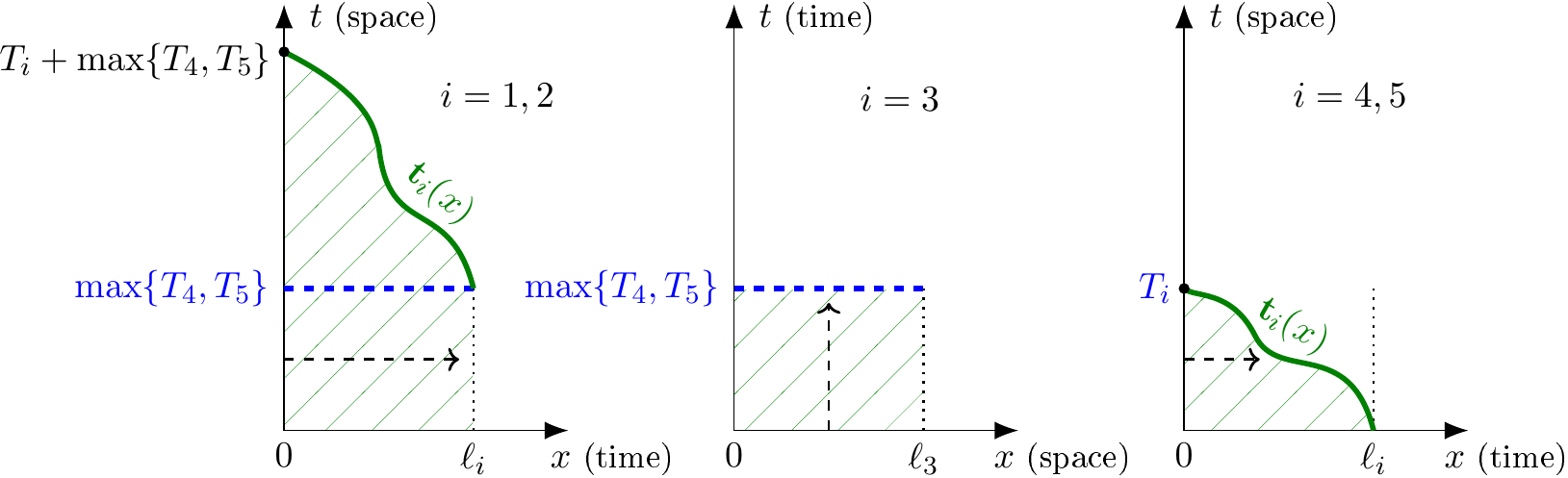}
\caption{Recovering the initial conditions: \textcolor{black}{meaning of the controllability time}.}
\label{fig:A_ini}
\end{figure}
\medskip

\noindent \textit{Step 2.2 (see Fig. \ref{fig:A_ini} center).}
Consider the edge $i=3$. We will show that not only $y_3$ fulfills \eqref{eq:IGEB_ini_cond}, but also
\begin{linenomath}
\begin{align} \label{eq:coinc3_BC}
y_3(0, t) = y_3^f(0, t), \quad y_3(\ell_3, t) = y_3^f(\ell_3, t), \quad t \in [0, \max \{T_4, T_5\}]
\end{align}
\end{linenomath}
holds. Indeed, $y_3$ and $y_3^f$ both solve the forward problem
\begin{linenomath}
\begin{subnumcases}{\label{eq:pb3}}
\label{eq:pb3_gov}
\partial_t \mathbf{y}_3 + A_i \partial_x \mathbf{y}_3 + \overline{B}_i \mathbf{y}_3 = \overline{g}_i(\cdot,\mathbf{y}_3) &\hspace{-0.45cm}$\text{in } (0, \ell_3)\times(0, \max \{T_4, T_5\})$\\
\label{eq:pb3_BC_0}
\mathbf{v}_3 (0, t) = \overline{R}_3(0)^\intercal (\overline{R}_1 v_1)(\ell_1, t) (t) &\hspace{-0.45cm}$t\in (0, \max \{T_4, T_5\})$\\
\label{eq:pb3_BC_l3}
\mathbf{v}_3 (\ell_3, t) = \overline{R}_3(\ell_3)^\intercal (\overline{R}_2 v_2)(\ell_2, t) (t) &\hspace{-0.45cm}$t\in (0, \max \{T_4, T_5\})$\\
\label{eq:pb3_ini}
\mathbf{y}_3(x,0) = y_3^0(x) &\hspace{-0.45cm}$x \in (0, \ell_i)$,
\end{subnumcases}
\end{linenomath}
which admits a unique solution in $C^1([0, \ell_3]\times[0, \max\{T_4, T_5\}];\mathbb{R}^{12})$.
In fact, $y_3$ fulfills \eqref{eq:pb3} by definition (see Step 3); concerning $y_3^f$, it fulfills \eqref{eq:pb3_gov} and \eqref{eq:pb3_ini} by definition, while \eqref{eq:coinc12_BC} and \eqref{eq:pb3full_BC_0_l3} imply that $y_3^f$ fulfills \eqref{eq:pb3_BC_0} and \eqref{eq:pb3_BC_l3}.

\medskip

\noindent \textit{Step 2.3 (see Fig. \ref{fig:A_ini} rightmost).}
Finally, consider the edges $i\in\{4,5\}$.
Let $\mathbf{t}_i$ be the function defined just as in Step 2.1 except that $\mathbf{t}_i(0) = T_i$. In other words, 
\begin{linenomath}
\begin{align*}
\mathbf{t}_i(x) = T_i - \int_0^x \Lambda_i(s)ds
\end{align*}
\end{linenomath}
Here, the definition of $T_i$ in \eqref{eq:def_Lambdai_Ti} ensures that $\mathbf{t}_i(\ell_i) = 0$, and therefore the corresponding domain $\mathcal{R}(i, \mathbf{t}_i)$ defined by \eqref{eq:def_dom_calR}, contains $[0, \ell_i] \times \{0\}$.
Both $y_i$ and $y_i^f$ fulfill the following one-sided rightward problem with unknown $\mathbf{y}_i$:
\begin{linenomath}
\begin{subnumcases}{\label{eq:pb45}}
\label{eq:pb45_gov}
\partial_x \mathbf{y}_i + A_i^{-1} \partial_t \mathbf{y}_i + A_i^{-1} \overline{B}_i \mathbf{y}_i = (A_i^{-1} \overline{g}_i)(\cdot, y_i)  & $\text{in } \mathcal{R}(i, \mathbf{t}_i)$\\
\label{eq:pb45_BC}
\mathbf{v}_i(x,0) = v_i^0(x) & $x \in (0, \ell_i)$\\
\label{eq:pb45_ini}
\mathbf{y}_i (0, t) = \widetilde{y}_i(t) & $t\in (0, T_i)$,
\end{subnumcases}
\end{linenomath}
where $\widetilde{y}_i$ is defined by \eqref{eq:v45_x=0}-\eqref{eq:z45_x=0}.
Indeed, while it is clear that $y_i$ fulfills \eqref{eq:pb45} and $y_i^f$ fulfills \eqref{eq:pb45_gov}-\eqref{eq:pb45_BC} by definition, one also obtains, using \eqref{eq:coinc12_BC}, \eqref{eq:coinc3_BC} and the fact that $y_i^f$ satisfies the transmission conditions \eqref{eq:IGEB_cont_velo}-\eqref{eq:IGEB_Kirchhoff}, that $y_i^f$ also fulfills \eqref{eq:pb45_ini}.
The definition of $\mathbf{t}_i$ ensures that any characteristic curve of \eqref{eq:pb45} passing through $(x,t) \in \mathcal{R}(i, \mathbf{t}_i)$ is necessarily entering this domain at $\{0\} \times [0,T_i]$ or at $[0, \ell_i] \times \{0\}$.
Hence, similarly to Step 2.1, one can apply \cite[Section 1.7]{li2016book} to obtain that the solution in $C^1(\mathcal{R}(i, \mathbf{t}_i); \mathbb{R}^{12})$ to \eqref{eq:pb45} is unique.

\medskip

\noindent \textit{Step 3.}
Finally, we choose $q_i$ defined by $q_i(t) = z_i(\ell_i, t)$ for all $t \in [0, T], i \in \{4, 5\}$. In view of the uniqueness of the solution to \eqref{eq:syst_physical}, $q_4, q_5$ are controls satisfying the desired conditions of Theorem \ref{th:controllability}.
\end{proof}

\section{Relationship between the GEB and IGEB networks}
\label{sec:invert_transfo}

As in Section \ref{sec:exist}, we now consider a general network, and seek to prove Theorem \ref{thm:solGEB}. To do so, in Lemma \ref{lem:invert_transfo} below, we start by inverting, on some specific spaces, the transformation $\mathcal{T}$ defined in \eqref{eq:transfo} that relates the states of \eqref{eq:GEB_netw} and \eqref{eq:syst_physical}.
Henceforth, for any functions $(u_i)_{i \in \mathcal{I}}$ such that $u_i \colon [0, \ell_i]\times[0, T]\rightarrow \mathbb{R}^{12}$, we use the notation $u_i=(u_{i,1}^\intercal, \ldots, u_{i, 4}^\intercal)^\intercal$, where $u_{i,k} \colon [0, \ell_i]\times[0, T]\rightarrow \mathbb{R}^3$ for all $k \in \{1, \ldots, 4\}$.
Let us define the spaces
\begin{linenomath}
\begin{align*}
E_1 &= \big\{(\mathbf{p}_i, \mathbf{R}_i)_{i \in \mathcal{I}} \in {\textstyle \prod_{i=1}^N} C^2\left([0, \ell_i]\times[0, T]; \mathbb{R}^3 \times \mathrm{SO}(3)\right) \colon \eqref{eq:GEB_condNSv_p_R}, \eqref{eq:GEB_IC_0ord} \text{ hold} \big\}\\
E_2 &= \big\{(y_i)_{i \in \mathcal{I}} \in {\textstyle \prod_{i=1}^N} C^1\left([0, \ell_i]\times[0, T] ; \mathbb{R}^{12}\right) \colon u_i := \mathrm{diag}(\mathbf{I}_6, \mathbf{C}_i) y_i \text{ satisfies} \\
&\qquad \text{\eqref{eq:compat_last6eq}-\eqref{eq:compat_ini}-\eqref{eq:compat_nod}} \big\},
\end{align*}
\end{linenomath}
where \eqref{eq:compat_last6eq}-\eqref{eq:compat_ini}-\eqref{eq:compat_nod} are the following conditions:
\begingroup
\setlength{\tabcolsep}{1pt} 
\renewcommand{\arraystretch}{0.75} 
\begin{linenomath}
\begin{align}
\label{eq:compat_last6eq}
&\begin{aligned}
&\text{for all }i \in \mathcal{I}, \text{ in }(0, \ell_i)\times (0, T)\\
&\partial_t \begin{bmatrix}
u_{i,3} \\ u_{i,4}
\end{bmatrix} - \partial_x \begin{bmatrix}
u_{i,1} \\ u_{i,2}
\end{bmatrix} - \begin{bmatrix}
\widehat{\Upsilon}_c^i & \widehat{e}_1 \\
\mathbf{0}_3 & \widehat{\Upsilon}_c^i
\end{bmatrix}\begin{bmatrix}
u_{i,1} \\ u_{i,2}
\end{bmatrix}  = 
\begin{bmatrix}
\widehat{u}_{i,2} & \widehat{u}_{i,1}\\
\mathbf{0}_3 & \widehat{u}_{i,2}
\end{bmatrix} \begin{bmatrix}
u_{i,3} \\ u_{i,4}
\end{bmatrix},
\end{aligned}\\
\label{eq:compat_ini}
&\begin{aligned}
\text{for all }i \in \mathcal{I}, \text{ in } (0, \ell_i), \quad \tfrac{\mathrm{d}}{\mathrm{d}x}\mathbf{p}_i^0(\cdot) &= \mathbf{R}_i^0(\cdot) (u_{i,3}(\cdot, 0) + e_1),\\
\tfrac{\mathrm{d}}{\mathrm{d}x} \mathbf{R}_i^0(\cdot) &= \mathbf{R}_i^0(\cdot)(\widehat{u}_{i,4}(\cdot, 0) + \widehat{\Upsilon}_c^i(\cdot)),
\end{aligned}\\
\label{eq:compat_nod}
&\begin{aligned}
\text{for all } n \in \mathcal{N}_S^D, \text{ in } (0, T),\quad \tfrac{\mathrm{d}}{\mathrm{d}t} f_n^\mathbf{p} (\cdot) &= f_n^\mathbf{R}(\cdot) \widehat{u}_{{i^n},1}(\mathbf{x}_{i^n}^n, \cdot),\\
\tfrac{\mathrm{d}}{\mathrm{d}t} f_n^\mathbf{R} (\cdot) &= f_n^\mathbf{R}(\cdot) \widehat{u}_{{i^n},2}(\mathbf{x}_{i^n}^n, \cdot).
\end{aligned}
\end{align}
\end{linenomath}
\endgroup 
The following result then holds.

\begin{lemma} \label{lem:invert_transfo}
Assume that $(\mathbf{p}_i^0, \mathbf{R}_i^0, f_n^\mathbf{p}, f_n^\mathbf{R})$ are of regularity \eqref{eq:reg_Idata_GEB} and \eqref{eq:reg_Ndata_GEB_D}, and fulfill \eqref{eq:compat_GEB_-1}.
Then, the transformation $\mathcal{T}\colon E_1 \rightarrow E_2$, defined in \eqref{eq:transfo}, is bijective.
\end{lemma}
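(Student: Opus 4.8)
The plan is to prove bijectivity by exhibiting an explicit inverse $\mathcal{S}\colon E_2 \to E_1$ and checking it is two-sided. Unravelling \eqref{eq:transfo}--\eqref{eq:single_beam_VWPhiPsi}, for a pair $(\mathbf{p}_i,\mathbf{R}_i)$ to satisfy $\mathcal{T}_i(\mathbf{p}_i,\mathbf{R}_i)=y_i$, writing $u_i=\mathrm{diag}(\mathbf{I}_6,\mathbf{C}_i)y_i$, it must solve the four first-order relations $\partial_t\mathbf{R}_i=\mathbf{R}_i\widehat{u}_{i,2}$, $\partial_x\mathbf{R}_i=\mathbf{R}_i(\widehat{u}_{i,4}+\widehat{\Upsilon}_c^i)$, $\partial_t\mathbf{p}_i=\mathbf{R}_iu_{i,1}$ and $\partial_x\mathbf{p}_i=\mathbf{R}_i(u_{i,3}+e_1)$. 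Since membership in $E_1$ fixes $(\mathbf{p}_i,\mathbf{R}_i)(\cdot,0)=(\mathbf{p}_i^0,\mathbf{R}_i^0)$ through \eqref{eq:GEB_IC_0ord}, injectivity of $\mathcal{T}$ is immediate from uniqueness for the time-ODEs above; the substance is thus the solvability of this (overdetermined) system, i.e.\ surjectivity.

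I would first reconstruct the rotations. For each fixed $x$, I solve the linear matrix ODE $\partial_t\mathbf{R}_i=\mathbf{R}_i\widehat{u}_{i,2}$ in $t$ with datum $\mathbf{R}_i(x,0)=\mathbf{R}_i^0(x)$. To keep the solution in $\mathrm{SO}(3)$ with the required $C^2$ regularity and smooth joint dependence on $(x,t)$, I follow the authors' suggestion and lift to unit quaternions: representing the rotation by a quaternion $q_i$ and $\widehat{u}_{i,2}$ by its associated linear quaternionic operator turns the equation into a linear ODE whose solution automatically preserves $|q_i|=1$, hence lands in $\mathrm{SO}(3)$ and inherits the regularity of $u_{i,2}$. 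It then remains to show this $\mathbf{R}_i$ also solves the spatial equation. Setting $Z_i:=\partial_x\mathbf{R}_i-\mathbf{R}_i(\widehat{u}_{i,4}+\widehat{\Upsilon}_c^i)$, a direct computation using $\partial_t\mathbf{R}_i=\mathbf{R}_i\widehat{u}_{i,2}$ shows $Z_i$ satisfies a homogeneous linear ODE in $t$ precisely when the integrability identity encoded by the second row of \eqref{eq:compat_last6eq} holds; since \eqref{eq:compat_ini} gives $Z_i(\cdot,0)=0$, uniqueness forces $Z_i\equiv0$.

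With $\mathbf{R}_i$ in hand I reconstruct $\mathbf{p}_i$ in the same fashion: solve $\partial_t\mathbf{p}_i=\mathbf{R}_iu_{i,1}$ with $\mathbf{p}_i(x,0)=\mathbf{p}_i^0(x)$, then verify $\partial_x\mathbf{p}_i=\mathbf{R}_i(u_{i,3}+e_1)$ by the same cross-derivative argument, now using the first row of \eqref{eq:compat_last6eq} as the integrability condition and the first line of \eqref{eq:compat_ini} to annihilate the datum at $t=0$. To conclude $(\mathbf{p}_i,\mathbf{R}_i)\in E_1$ it remains to recover the Dirichlet conditions \eqref{eq:GEB_condNSv_p_R}: for $n\in\mathcal{N}_S^D$, both $\mathbf{R}_{i^n}(\mathbf{x}_{i^n}^n,\cdot)$ and $f_n^\mathbf{R}$ solve the same time-ODE driven by $\widehat{u}_{i^n,2}(\mathbf{x}_{i^n}^n,\cdot)$ -- the former by construction, the latter by \eqref{eq:compat_nod} -- and agree at $t=0$ by \eqref{eq:compat_GEB_-1}, whence they coincide; feeding this into the $\mathbf{p}$-equation and using \eqref{eq:compat_nod} and \eqref{eq:compat_GEB_-1} once more gives $\mathbf{p}_{i^n}(\mathbf{x}_{i^n}^n,\cdot)=f_n^\mathbf{p}$. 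By construction $\mathcal{T}\circ\mathcal{S}=\mathrm{id}_{E_2}$, while $\mathcal{S}\circ\mathcal{T}=\mathrm{id}_{E_1}$ follows from the uniqueness used for injectivity, so $\mathcal{T}$ is bijective.

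The crux is the compatibility (cross-derivative) step: the system for $(\mathbf{p}_i,\mathbf{R}_i)$ is overdetermined -- a $t$- and an $x$-equation for each unknown -- so its solvability is not automatic and hinges on exactly the identities packaged in \eqref{eq:compat_last6eq}, which are nothing but the last six equations of \eqref{eq:IGEB_gov} and are the reason these conditions enter the definition of $E_2$. The secondary technical difficulty is carrying out the $\mathrm{SO}(3)$-valued reconstruction cleanly while retaining $C^2$ regularity, which is precisely what the quaternion parametrization is designed to handle.
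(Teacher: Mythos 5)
Your proposal is correct and follows essentially the same route as the paper: reconstruct $\mathbf{R}_i$ via the quaternion parametrization, use the last six equations of \eqref{eq:IGEB_gov} (i.e.\ \eqref{eq:compat_last6eq}) together with \eqref{eq:compat_ini} as integrability conditions for the overdetermined $(x,t)$-system, repeat for $\mathbf{p}_i$, and recover \eqref{eq:GEB_condNSv_p_R} from \eqref{eq:compat_nod} and \eqref{eq:compat_GEB_-1} by ODE uniqueness. The only presentational difference is that the paper prescribes the data of the overdetermined systems at the single point $(\mathbf{x}_i^n,0)$ and cites its companion work for their well-posedness (writing an explicit double-integral formula for $\mathbf{p}_i$), whereas you solve the $t$-equations along the whole line $t=0$ and check the $x$-equations by a cross-derivative argument — an equivalent formulation, as the paper itself notes.
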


\begin{proof}[Proof of Lemma \ref{lem:invert_transfo}]
One can easily verify that $(\mathcal{T}_i(\mathbf{p}_i, \mathbf{R}_i))_{i \in \mathcal{I}}$ belongs to $E_2$ for any given $(\mathbf{p}_i, \mathbf{R}_i)_{i\in\mathcal{I}} \in E_1$, and $\mathcal{T}$ is thus well defined.

Let $(y_i)_{i\in\mathcal{I}} \in E_2$. We will now show that, there exists a unique $(\mathbf{p}_i, \mathbf{R}_i)_{i \in \mathcal{I}}$ such that $\mathcal{T}((\mathbf{p}_i, \mathbf{R}_i)_{i\in\mathcal{I}}) = (y_i)_{i\in\mathcal{I}}$.
Consider $(u_i)_{i \in \mathcal{I}}$ defined by $u_i := \mathrm{diag}(\mathbf{I}_6, \mathbf{C}_i) y_i$.
Let $i \in \mathcal{I}$, and let $n$ be the index of any node such that $i \in \mathcal{I}^n$.

\medskip

\noindent There exists a unique solution $\mathbf{R}_i \in C^2([0, \ell_i]\times[0, T]; \mathrm{SO}(3))$ to 
\begin{linenomath}
\begin{subnumcases}{\label{eq:overdetR}}
\label{eq:overdetR_govt}
\partial_t \mathbf{R}_i = \mathbf{R}_i \widehat{u}_{i,2}
& $\text{in }(0, \ell_i) \times (0,T)$\\
\label{eq:overdetR_govx}
\partial_x \mathbf{R}_i = \mathbf{R}_i (\widehat{u}_{i,4} + \widehat{\Upsilon}_c^i) & $\text{in }(0, \ell_i) \times (0,T)$\\
\label{eq:overdetR_IBC}
\mathbf{R}_i(\mathbf{x}_i^n, 0) = \mathbf{R}_i^0(\mathbf{x}_i^n).
\end{subnumcases}
\end{linenomath}
To prove this, a possible way is to first rewrite \eqref{eq:overdetR}, whose state has values in $\mathrm{SO}(3)$, as a system with a $\mathbb{R}^4$-valued state (using \cite[Lem. 4.1]{RL2019}) via a parametrization of rotation matrices by quaternions \cite{chou1992}, and then use \eqref{eq:compat_last6eq} (last three equations) as compatibility conditions in order to deduce that the obtained system is well-posed (using \cite[Lem. 4.3]{RL2019}); this procedure is detailed in \cite[Section 4]{RL2019}.

\medskip

\noindent Having found $\mathbf{R}_i$, consider the following system
\begin{linenomath}
\begin{subnumcases}{\label{eq:overdetp}}
\label{eq:overdetp_govt}
\partial_t \mathbf{p}_i = \mathbf{R}_i u_{i,1}
& $\text{in }(0, \ell_i) \times (0,T)$\\
\label{eq:overdetp_govx}
\partial_x \mathbf{p}_i = \mathbf{R}_i (u_{i,3} + e_1) & $\text{in }(0, \ell_i) \times (0,T)$\\
\label{eq:overdetp_IBC}
\mathbf{p}_i(\mathbf{x}_i^n, 0) = \mathbf{p}_i^0(\mathbf{x}_i^n).
\end{subnumcases}
\end{linenomath}
Note that \eqref{eq:overdetp_govt} is equivalent to $\mathbf{p}_i(x,t) = \mathbf{p}_i(x,0) + \int_0^t (\mathbf{R}_i u_{i,1})(x, \tau)d\tau$. 
Without loss of generality, assume that $\mathbf{x}_i^n = 0$ (in the alternative case, the end of the proof is the same with each integral $+ \int_{0}^x$ below replaced by $- \int_x^{\ell_n}$).
By \eqref{eq:compat_ini} (first equation) and \eqref{eq:overdetp_IBC}, in the above expression for $\mathbf{p}_i(x,t)$, one may express the first term as $\mathbf{p}_i(x,0) = \mathbf{p}_i^0(\mathbf{x}_i^n) + \int_{\mathbf{x}_i^n}^x (\mathbf{R}_i^0(u_{i,3}^0+e_1))(s)ds$. Also, for any $x \in [0, \ell_i]$ and any $\tau \in [0, t]$ the integrand in the second term may be expressed as $(\mathbf{R}_i u_{i,1})(x, \tau) = (\mathbf{R}_i u_{i,1})(\mathbf{x}_i^n, \tau) + \int_{\mathbf{x}_i^n}^x (\mathbf{R}_i u_{i,1})(s, \tau) ds$. Hence, \eqref{eq:overdetp_govt} and \eqref{eq:overdetp_IBC} are equivalent to
\begin{linenomath}
\begin{align} \label{eq:pi_candidate}
\begin{aligned}
\mathbf{p}_i(x,t) &= \mathbf{p}_i^0(\mathbf{x}_i^n) + \int_0^t (\mathbf{R}_i u_{i,1})(\mathbf{x}_i^n, \tau)d\tau\\
&+ \int_{\mathbf{x}_i^n}^x (\mathbf{R}_i^0(u_{i,3}^0+e_1))(s)ds + \int_0^t \int_{\mathbf{x}_i^n}^x \partial_x (\mathbf{R}_i u_{i,1})(s, \tau)d\tau ds.
\end{aligned}
\end{align}
\end{linenomath}
On the other hand, we know that \eqref{eq:pi_candidate} fulfills  $\partial_t \mathbf{p}_i(\mathbf{x}_i^n, \cdot) = (\mathbf{R}_i u_{i,1})(\mathbf{x}_i^n, \cdot)$, while by \eqref{eq:compat_last6eq} (first three equations), one has $\partial_x (\mathbf{R}_i u_{i,1}) = \partial_t(\mathbf{R}_i(u_{i,3}+e_1))$. The latter two facts, together with \eqref{eq:compat_ini} (second equation), permit us to deduce that \eqref{eq:pi_candidate} also writes as $\mathbf{p}_i(x, t) = \mathbf{p}(\mathbf{x}_i^n, t) + \int_{\mathbf{x}_i^n}^x (\mathbf{R}_i(u_{i,3} + e_1))(t,s)ds$.
Thus, \eqref{eq:pi_candidate} is the unique solution to \eqref{eq:overdetp}.

Finally, note that, because of \eqref{eq:compat_ini}, requiring \eqref{eq:overdetR_IBC} and \eqref{eq:overdetp_IBC} is equivalent to imposing the initial conditions \eqref{eq:GEB_IC_0ord}. Moreover, in the case of $n \in \mathcal{N}_S^D$, due to \eqref{eq:compat_GEB_-1} and \eqref{eq:compat_nod}, requiring \eqref{eq:overdetR_IBC} and \eqref{eq:overdetp_IBC} is equivalent to imposing the nodal conditions \eqref{eq:GEB_condNSv_p_R}. This concludes the proof of Lemma \ref{lem:invert_transfo}.
\end{proof}

We now have the tools to prove Theorem \ref{thm:solGEB}.

\begin{proof}[Proof of Theorem \ref{thm:solGEB}]
We divide the proof in seven steps. Let $(y_i)_{i \in \mathcal{I}}$ be as in Theorem \ref{thm:solGEB}, and let $(u_i)_{i \in\mathcal{I}}$ be defined by $u_i = \mathrm{diag}(\mathbf{I}_6, \mathbf{C}_i) y_i$.

\medskip

\noindent \textit{Step 1: inverting the transformation.}
Since the last six equations in \eqref{eq:IGEB_gov} hold for $(y_i)_{i\in\mathcal{I}}$, we know that \eqref{eq:compat_last6eq} is fulfilled. On the other hand, the last six equations of the initial conditions \eqref{eq:IGEB_ini_cond} with initial data \eqref{eq:rel_inidata} yield \eqref{eq:compat_ini}. Finally, the definition of the boundary data \eqref{eq:def_qn_D}, together with the nodal conditions \eqref{eq:IGEB_condNSv} on velocities, yield \eqref{eq:compat_nod}. Hence, $(y_i)_{i\in\mathcal{I}} \in E_2$, and by Lemma \ref{lem:invert_transfo} there exists a unique $(\mathbf{p}_i, \mathbf{R}_i)_{i \in \mathcal{I}} \in E_1$ such that
\begin{linenomath} 
\begin{align} \label{eq:transfo_inverted}
y_i = \mathcal{T}_i(\mathbf{p}_i, \mathbf{R}_i), \quad \text{for all }i \in \mathcal{I}.
\end{align}
\end{linenomath}

Now, we want to check that this ``candidate'' $(\mathbf{p}_i, \mathbf{R}_i)_{i \in \mathcal{I}}$, satisfies the rest of system \eqref{eq:GEB_netw}.

\medskip

\noindent \textit{Step 2: governing equations.}
Using \eqref{eq:transfo_inverted} and the first six governing equations in \eqref{eq:IGEB_gov}, one can deduce that $(\mathbf{p}_i, \mathbf{R}_i)_{i \in \mathcal{I}}$ satisfies the governing system \eqref{eq:GEB_gov} after some computations.

\medskip

\noindent \textit{Step 3: conditions at simple nodes.}
For $n \in \mathcal{N}_S^N$, from \eqref{eq:transfo_inverted} together with the nodal conditions \eqref{eq:IGEB_condNSz} on forces and moments and the definition of $f_n$ (see \eqref{eq:def_fn}), one can directly deduce that the nodal conditions \eqref{eq:GEB_condNSz} hold.

For $n \in \mathcal{N}_S^D$, from \eqref{eq:transfo_inverted} together with the nodal conditions \eqref{eq:IGEB_condNSv} on velocities and initial conditions \eqref{eq:GEB_IC_0ord}, we deduce that $(\mathbf{p}_{i^n}(\mathbf{x}_{i^n}^n, \cdot), \mathbf{R}_{i^n}(\mathbf{x}_{i^n}^n, \cdot))$ satisfies
\begin{linenomath}
\begin{align} \label{eq:nodPDE}
\left\{ \begin{aligned}
&\frac{\mathrm{d}\beta}{\mathrm{d}t}(t) = \beta(t)  \widehat{q_n^W}(t), \ \ \frac{\mathrm{d}\alpha}{\mathrm{d}t}(t) = \beta(t) q_n^V(t) && \ \text{ in }(0, T)\\
&(\alpha, \beta)(0) = (\mathbf{p}_{i^n}^0, \mathbf{R}_{i^n}^0)(\mathbf{x}_{i^n}^n),
\end{aligned} \right.
\end{align}
\end{linenomath}
of unknown state $(\alpha, \beta)$, where we denote $q_n = ((q_n^V)^\intercal, (q_n^W)^\intercal)^\intercal$ with $q_n^V, q_n^W \in C^1([0, T]; \mathbb{R}^3)$.
Due to \eqref{eq:def_qn_D} and \eqref{eq:compat_GEB_-1}, $(f_n^\mathbf{p}, f_n^\mathbf{R})$ also satisfies \eqref{eq:nodPDE}.
One may see that \eqref{eq:nodPDE} admits a unique solution in $C^2([0, T]; \mathbb{R}^{3}\times \mathrm{SO}(3))$. Indeed, as in the proof of Lemma \ref{lem:invert_transfo}, one may replace \eqref{eq:nodPDE} (first equation) by an equivalent equation whose unknown state is the quaternion \cite{chou1992} parametrizing the rotation matrix $\beta = \beta(t)$ (see \cite[Section 4]{RL2019} for more detail). Having then only vector valued unknowns, one can use the classical ODE theory. Thus, $(\mathbf{p}_{i^n},\mathbf{R}_{i^n})(\mathbf{x}_{i^n}^n, \cdot) \equiv (f_n^\mathbf{p}, f_n^\mathbf{R})$, and the nodal conditions \eqref{eq:GEB_condNSv_p_R} hold.

\medskip

\noindent \textit{Step 4: remaining initial conditions.}
One recovers the initial conditions \eqref{eq:GEB_IC_1ord} directly from the first six equations in \eqref{eq:IGEB_ini_cond} and the definition of $y_i^0$ \eqref{eq:rel_inidata}, together with \eqref{eq:transfo_inverted}.

\medskip

\noindent \textit{Step 5: rigid joint condition.}
In order to show that $(\mathbf{p}_i, \mathbf{R}_i)_{i \in \mathcal{I}}$ fulfills the transmission conditions of \eqref{eq:GEB_netw}, we start with the rigid joint condition. Let $n \in \mathcal{N}_M$.  For all $i \in \mathcal{I}^n$, let us define $\Lambda_i \in  C^1([0, T]; \mathbb{R}^{3 \times 3})$ by $\Lambda_i(t) = (R_i \mathbf{R}_i^\intercal)(\mathbf{x}_i^n, t)$. By the continuity condition \eqref{eq:IGEB_cont_velo} (last three equations),
\begin{linenomath}
\begin{align} \label{cont_derivative_angle}
\textstyle
\left(\frac{\mathrm{d}}{\mathrm{d}t} \Lambda_i \right) \Lambda_i^\intercal  = \left(\frac{\mathrm{d}}{\mathrm{d}t} \Lambda_{i^n} \right)\Lambda_{i^n}^\intercal, \quad \text{in }(0, T), \text{ for all }i\in\mathcal{I}^n.
\end{align}
\end{linenomath}
Let $F_n := \left(\frac{\mathrm{d}}{\mathrm{d}t} \Lambda_{i^n} \right)\Lambda_{i^n}^\intercal$ and $a_n := (R_{i^n}{\mathbf{R}_{i^n}^0}^\intercal)(\mathbf{x}_{i^n}^n)$. By \eqref{cont_derivative_angle}, \eqref{eq:compat_GEB_transmi} (second equation) and the fact that \eqref{eq:GEB_IC_0ord} holds (by Step 1), for all $i \in \mathcal{I}^n$, $\Lambda_i$ fulfills
\begin{linenomath}
\begin{align*}
\begin{dcases}
\frac{\mathrm{d}}{\mathrm{d}t} \Lambda_i(t) = F_n(t) \Lambda_i(t) & \text{for all }t \in (0, T)\\
\Lambda_i(0) = a_n,
\end{dcases}
\end{align*}
\end{linenomath}
which admits a unique $C^1([0, T]; \mathbb{R}^{3 \times 3})$ solution (see \cite[Sec. 2.1 and Th. 4.1.1 or Coro. 2.4.4]{vrabie2004}, for instance). Hence, $\Lambda_i \equiv \Lambda_j$ for all $i,j\in\mathcal{I}^n$, and the rigid joint condition \eqref{eq:GEB_rigid_angles} holds.

As \eqref{eq:GEB_rigid_angles} holds, we can now deduce the transmission conditions of \eqref{eq:GEB_netw} that remain.

\medskip

\noindent \textit{Step 6: continuity of the displacement.}
Let $n \in \mathcal{N}_M$.
By \eqref{eq:transfo_inverted} together with the rigid joint condition \eqref{eq:GEB_rigid_angles} and the continuity condition \eqref{eq:IGEB_cont_velo} (first three equations), one deduces that 
\begin{linenomath}
\begin{align*} 
\partial_t \mathbf{p}_i(\mathbf{x}_i^n, t)  = \partial_t \mathbf{p}_{i^n}(\mathbf{x}_{i^n}^n, t), \quad \text{in }(0, T), \text{ for all } i\in\mathcal{I}^n.
\end{align*}
\end{linenomath}
Using additionally \eqref{eq:GEB_IC_0ord} with \eqref{eq:compat_GEB_transmi} (first equation), we deduce that for all $i \in \mathcal{I}^n$, the function $\mathbf{p}_i(\mathbf{x}_i^n, \cdot)$ fulfills the problem
\begin{linenomath}
\begin{align} \label{eq:contdipl_ODE}
\begin{dcases}
\partial_t \mathbf{p}_i(\mathbf{x}_i^n, t) = h_n(t) & \text{for all }t \in (0, T)\\
\mathbf{p}_i(\mathbf{x}_i^n, 0) = \alpha_n,
\end{dcases}
\end{align}
\end{linenomath}
where we denote $h_n := \partial_t \mathbf{p}_{i^n}(\mathbf{x}_{i^n}^n, \cdot)$ and $\alpha_n := \mathbf{p}_{i^n}^0(\mathbf{x}_{i^n}^n)$. Since the $C^1([0, T];\mathbb{R}^3)$ solution to \eqref{eq:contdipl_ODE} is unique, we conclude that \eqref{eq:GEB_continuity_pi} holds.

\medskip

\noindent \textit{Step 7: Kirchhoff condition.}
One recovers the Kirchhoff condition \eqref{eq:GEB_Kirchhoff} from the rigid joint assumption \eqref{eq:GEB_rigid_angles} together with \eqref{eq:IGEB_Kirchhoff} and \eqref{eq:transfo_inverted}.

To finish, the uniqueness of the solution to \eqref{eq:GEB_netw} is a consequence of the uniqueness of the solution to \eqref{eq:syst_physical} (Theorem \ref{th:existence}) and of the bijectivity of the transformation $\mathcal{T}$ (Lemma \ref{lem:invert_transfo}). This concludes the proof.
\end{proof}

\section{Concluding remarks and outlook}

\label{sec:conclusion}

In this article, we have studied networks, possibly with cycles, of geometrically exact beams. Notably, we considered the representations of such beams in terms of either displacements and rotations expressed in a fixed coordinate system (GEB model), or velocities and internal forces/moments expressed in a moving coordinate system attached to the beam (IGEB model), reflecting on the advantages and drawbacks of these two points of view, and the relationship between them. 
For these beam networks, we addressed the problem of local exact controllability of nodal profiles in the special case of a network containing one cycle: the A-shaped network depicted in Fig. \ref{subfig:AshapedNetwork}.

The fact that one has the possibility of expressing the beam model as a first-order semilinear hyperbolic system -- the IGEB model -- while keeping track of the link with the GEB model, permits us to give a proof of nodal profile controllability in line with works done on other one-dimensional hyperbolic systems -- e.g., wave equation, Saint-Venant equations, Euler equations \cite{gu2011, gu2013, gugat10, li2010nodal, li2016book, kw2011, kw2014, YWang2019partialNP, Zhuang2018}. Namely, we used the existence and uniqueness theory of semi-global classical solutions to the network system, combined with a constructive method as in \cite{Zhuang2018} to obtain adequate controls.

\medskip

\noindent \textbf{Local nature of the results.}
{\color{black} 
Let us give some comments about the local nature of the nodal profile controllability result, Theorem \ref{th:controllability}. This theorem notably implies that even though there might be large displacements and rotations of the beam -- due to the use of a geometrically exact (thus nonlinear) beam model --, we apply controls that subsequently keep these motions small. As noted in Remark \ref{rem:controllability_thm} \ref{subrem:global}, Theorem \ref{th:controllability} focuses on the small data scenario and could possibly be a preliminary step in view of obtaining a global result.

Besides, in the proof of Theorem \ref{th:controllability}, some ``degree of freedom'' has not been used, as we rely on an existence and uniqueness result which has been established for general one-dimensional first-order quasilinear hyperbolic systems. Since we are considering a very specific model -- the IGEB model -- it would be interesting to establish an appropriate well-posedness result and keep track of the bounds on the initial and boundary data to obtain more quantitative information.

On another hand, as explained in the introduction, the GEB and IGEB models are valid as long as the strains $s_i(x,t)$ are small enough. The latter being proportional to the the internal forces and moments $z_i(x,t)$ (more precisely, they are given by $s_i = \mathbf{C}_i z_i$ where we recall that $\mathbf{C}_i(x)$ denotes the flexibility matrix), comparing this assumption to the smallness of the internal forces and moments required in Theorem \ref{th:controllability} would also be of interest.
}

\medskip

\begin{figure}
    \centering
    \includegraphics[scale = 0.8]{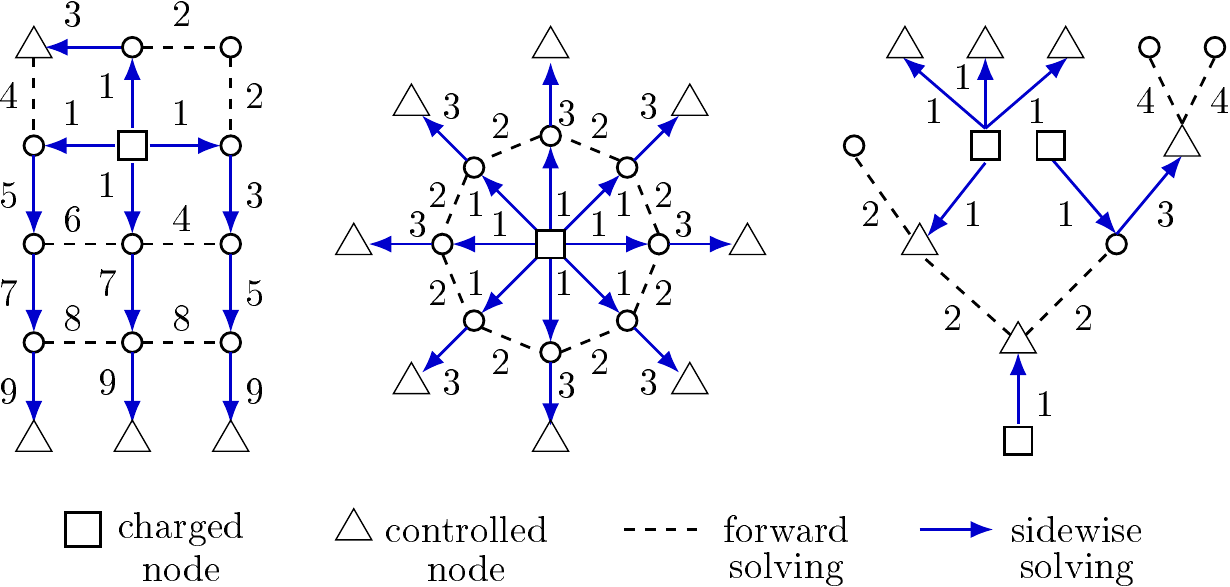}
    \caption{Other networks for which local exact controllability of nodal profiles is achievable by following Algorithm \ref{algo:control} (the numbers refer to the variable \textsf{step}).}
    \label{fig:otherControllableNet}
\end{figure}

\noindent \textbf{More general networks.} The A-shaped network is an illustrative example where the controllability of nodal profiles is achievable for a network with a cycle, but let us stress that similar arguments to those used in Section \ref{sec:controllability} apply for various other networks, and with controls at different locations.

\begin{algorithm}
\DontPrintSemicolon

\Input{%
$\mathcal{I}, \, \mathcal{N}, \, \mathcal{N}_S, \, k_n$\tcp*{edges, nodes, simple nodes, degrees}
\hspace{1.2cm}$\mathcal{I}^n$ for all $n\in \mathcal{N}$\tcp*{edges incident to the node $n$}
\hspace{1.2cm}$\mathcal{N}^i$ for all $i \in \mathcal{I}$\tcp*{nodes at the tips of the edge $i$}
\hspace{1.2cm}$\mathcal{P}$, $\mathcal{C}$\tcp*{charged nodes, controlled nodes}
\hspace{1.2cm}$\mathcal{S}$\tcp*{edges on control paths}
}

\setcounter{AlgoLine}{0}
\ShowLn
$J$ $\leftarrow$ $[\, 0, \quad $for $n = 1 \ldots \#\mathcal{N}\, ]$; \ \lFor{all $n \in \mathcal{P}$}{($J(n)$ $\leftarrow$ $k_n - 1$);} 
\tcp*{amount $J(n)$ of data available at $n$ to solve sidewise}

\ShowLn
$\mathcal{F}$ $\leftarrow$ $\emptyset$;\tcp*{solved edges}
 
\ShowLn
\textsf{step} $\leftarrow$ $1$; \ \textsf{moved} $\leftarrow$ \textsf{false};\tcp*{to count the steps}

\ShowLn
$\mathcal{M}$  $\leftarrow$ $\mathcal{P}\cup \{n \in \mathcal{N} \colon\text{$n$ not incident with any edge in $\mathcal{S}$}\};$\;
 
\ShowLn
\While(\hfill \tcp*[h]{while entire network not solved}){$\mathcal{F} \neq \mathcal{I}$}{
 
\ShowLn
\For(\hfill \tcp*[h]{Principle 1}){$m =\#\mathcal{M}, \ldots, 3, 2, 1$}{

\ShowLn
\For{all ${\mathcal{N}}^\dagger \subseteq \mathcal{M}$ such that $\#\mathcal{N}^\dagger = m$}{

\ShowLn
\If{there exists a connected subgraph with nodes ${\mathcal{N}}^\dagger$ and edges ${\mathcal{I}}^\dagger$, such that ${\mathcal{I}}^\dagger \cap ( \mathcal{S}\cup \mathcal{F}) = \emptyset$}{

\ShowLn
  solve forward problem for the network $({\mathcal{I}^\dagger}, {\mathcal{N}}^\dagger)$;\;
  
\ShowLn
\lFor{all $n \in {\mathcal{N}}^\dagger$}{
  ($J(n)$ $\leftarrow$ $J(n) + 1$);}
  
\ShowLn
$\mathcal{M}$ $\leftarrow$ $\mathcal{M} \cup {\mathcal{N}}^\dagger$; \, $\mathcal{F}$ $\leftarrow$ $\mathcal{F} \cup {\mathcal{I}}^\dagger$; \ \textsf{moved} $\leftarrow$ \textsf{true};\;
}}}

\ShowLn
\lIf{\upshape \textsf{moved} $=$ \textsf{true} }{(\textsf{step} $\leftarrow$ \textsf{step} $+ 1$; \ \textsf{moved} $\leftarrow$ \textsf{false});}  
   
\ShowLn
\For(\hfill \tcp*[h]{Principle 2}){all $n \in \mathcal{M}$}{

\ShowLn
\If(\hfill \tcp*[h]{if enough data at $n$}){$J(n) = k_n - 1$}{

\ShowLn
\For{all $i \in \mathcal{I}^n \cap (\mathcal{S} \setminus \mathcal{F}$)}{

\ShowLn
solve sidewise problem for the edge $i$ with ``initial \;
  conditions'' at the node $n$;\;

\ShowLn
$\mathcal{M}$ $\leftarrow$ $\mathcal{M} \cup \mathcal{N}^i$; \, $\mathcal{F}$ $\leftarrow$ $\mathcal{F}\cup \{i\}$;\;

\ShowLn
\lFor{all $m \in \mathcal{N}^i$}{
  ($J(m)$ $\leftarrow$ $J(m) + 1$);}
  } 
  
\ShowLn 
\textsf{moved} $\leftarrow$ \textsf{true};\;
}}

\ShowLn
\lIf{\upshape \textsf{moved} $=$ \textsf{true}}{(\textsf{step} $\leftarrow$ \textsf{step} $+ 1$; \ \textsf{moved} $\leftarrow$ \textsf{false});} 
}

\ShowLn  
Compute controls $q_n$ by evaluating the trace at nodes $n\in\mathcal{C}$;\;
 
\caption{Steps of controllability proof for other networks}
\label{algo:control}
\end{algorithm}

Let us introduce some more notation. For any given network, we denote by $\mathcal{P}$ and $\mathcal{C}$ the set of indexes of the \emph{charged nodes} and \emph{controlled nodes} (see Section \ref{sec:intro}), respectively.
Given a charged node $n\in\mathcal{P}$ and a controlled node $m\in\mathcal{C}$, a \emph{control path} between $n$ and $m$ \cite{YWang2019partialNP}, is any connected subgraph (of the current graph representing the beam network) forming a path graph\footnote{A path graph is an oriented graph without cycle such that two of its nodes are of degree $1$, and all other nodes have a degree equal to $2$.} whose nodes of degree $1$ are $n$ and $m$. In Fig. \ref{fig:otherControllableNet}, examples of control paths are highlighted by blue arrows.

\medskip



\noindent On a \emph{tree-shaped} network -- hence without loop --, some conditions were proved to be \emph{sufficient} for the exact controllability of nodal profiles to be achieved \cite{gu2011, gu2013, li2016book, kw2011, kw2014, YWang2019partialNP}.
In \cite{YWang2019partialNP} the authors are concerned with the wave equation and provide a controllability result for any given tree-shaped network with possibly several charged nodes. Moreover, in \cite{YWang2019partialNP}, at a charged node $n \in \mathcal{P}$, profiles may be prescribed for only some (rather than all) of the edges incident with $n$, and profiles may be prescribed for only part of the state (which would translate in the case of \eqref{eq:syst_physical} to prescribing the velocities only, or the internal forces and moments only, for example). This type of problem, which is then called \emph{partial nodal profile controllability}, is not considered here.

The nodal profile controllability has also been established for the Saint-Venant system \cite{Zhuang2021, Zhuang2018} for numerous networks \emph{with cycles}, of various shapes and with several charged nodes.

\medskip

\noindent It arises, from these works, a series of conditions on the number and location of the charged nodes, which are sufficient to achieve the respective controllability goals. We refer notably to \cite[Theorem 5.1]{YWang2019partialNP}, and to \cite[Sections 7 and 8]{Zhuang2021}.

In the case of the beam networks considered in this article, these conditions become (recall that $k_n$ is defined as the degree of the node $n$)
\begin{enumerate}
\item The total number of controlled nodes $\#\mathcal{C}$ is equal to $\sum_{n\in \mathcal{P}} k_n$.
\item For any $n \in \mathcal{P}$, there are $k_n$ controlled nodes connecting with it through control paths. These control paths have the charged node $n$ for sole common node. 
\item The control paths corresponding to different charged nodes do not have any common node.
\end{enumerate}
Let us stress again that we are restricting ourselves to the type of systems presented in Subsection \ref{subsec:network_systems}. Namely, if a multiple node is controlled, then the control is applied at the Kirchhoff condition, while if a simple node is controlled, then the control is applied at either the first six (velocities) or last six (internal forces and moments) components of the state $y_i$, and at any charged node $n \in\mathcal{P}$ profiles are prescribed for all incident beams $i \in \mathcal{I}^n$ and for the entire state $y_i$.

\medskip

\noindent Then, one may use the \emph{constructive} method as in Section \ref{sec:controllability}, by following the steps instructed by Algorithm \ref{algo:control}, for different networks; see Fig. \ref{fig:otherControllableNet}.
We can assert that this algorithm yields a proof of controllability for the networks defined in Fig. \ref{fig:otherControllableNet}, but not that it constitutes a proof for any given network.

In Algorithm \ref{algo:control}, edges belonging to control paths are solved according to the \emph{Principle 2} -- solving a sidewise problem as in the Steps 1.3 and 1.5 of the proof of Theorem \ref{th:controllability} -- while the other edges are solved according to the \emph{Principle 1} -- solving a forward problem similar to the Step 1.4 of the proof of Theorem \ref{th:controllability}.

As noted here and in the above cited works, the conditions given to obtain controllability of nodal profiles are only \emph{sufficient} to ensure the controllability result and the search for necessary and sufficient conditions is open.


\bibliographystyle{acm}
\bibliography{mybibfile}

\end{document}